\newtheorem{thm}{Theorem}[section]
\newtheorem{prop}[thm]{Proposition}
\theoremstyle{definition}
\newtheorem{Lemma}[thm]{Lemma}
\newtheorem{Theorem}[thm]{Theorem}
\newtheorem{Proposition}[thm]{Proposition}
\newtheorem{Corollary}[thm]{Corollary}
\newtheorem{Definition}[thm]{Definition}
\newtheorem{Example}[thm]{Example}
\theoremstyle{remark}
\numberwithin{equation}{section}
\newcommand{\zhclass}[1]{\langle#1\rangle}
\newcommand{\myd}[1]{D_{(#1)}}
\newcommand{\rdto}[1]{\stackrel{#1}{\longrightarrow}}
\newcommand{\rdtop}[1]{\stackrel{#1}{\longrightarrow_+}}
\newcommand{\rdtopp}[1]{\stackrel{#1}{\longrightarrow_{++}}}
\newcommand{\mycomp}{\mathrm{Comp}}
\def\cvp{\mathrm{CVP}}
\def\deck{\mathrm{D}}
\newcommand{\myim}{\mathrm{Im}}
\newcommand{\myi}{\mathrm{I}}
\newcommand{\mysi}{\mathrm{SI}}
\def\gr{Gr\"{o}bner-Shirshov\ }
\newcommand{\comp}{\mathrm{Comp}}
\newcommand{\lt}{\,\mathrm{lt}}
\newcommand{\lc}{\,\mathrm{lc}}
\newcommand{\lw}{\,\mathrm{lw}}
\newcommand{\dg}[1]{\Delta_{{#1}^2}}
\newcommand{\mycl}{
\qbezier(30.,0)(30.,1.58)(29.83,3.16)
\qbezier(29.83,3.16)(29.67,4.73)(29.33,6.28)
\qbezier(29.33,6.28)(29.,7.83)(28.51,9.34)
\qbezier(28.51,9.34)(28.02,10.84)(27.37,12.29)
\qbezier(27.37,12.29)(26.72,13.73)(25.92,15.1)
\qbezier(25.92,15.1)(25.12,16.47)(24.19,17.75)
\qbezier(24.19,17.75)(23.25,19.03)(22.18,20.2)
\qbezier(22.18,20.2)(21.12,21.37)(19.93,22.42)
\qbezier(19.93,22.42)(18.75,23.47)(17.46,24.4)
\qbezier(17.46,24.4)(16.17,25.32)(14.79,26.1)
\qbezier(14.79,26.1)(13.41,26.88)(11.96,27.51)
\qbezier(11.96,27.51)(10.51,28.14)(9.,28.62)
\qbezier(9.,28.62)(7.49,29.09)(5.93,29.41)
\qbezier(5.93,29.41)(4.38,29.72)(2.8,29.87)
\qbezier(2.8,29.87)(1.23,30.02)(-0.36,30.)
\qbezier(-0.36,30.)(-1.94,29.98)(-3.51,29.79)
\qbezier(-3.51,29.79)(-5.09,29.61)(-6.63,29.26)
\qbezier(-6.63,29.26)(-8.18,28.91)(-9.68,28.4)
\qbezier(-9.68,28.4)(-11.18,27.89)(-12.61,27.22)
\qbezier(-12.61,27.22)(-14.05,26.55)(-15.41,25.74)
\qbezier(-15.41,25.74)(-16.77,24.93)(-18.04,23.97)
\qbezier(-18.04,23.97)(-19.3,23.02)(-20.46,21.94)
\qbezier(-20.46,21.94)(-21.62,20.86)(-22.66,19.66)
\qbezier(-22.66,19.66)(-23.7,18.47)(-24.6,17.17)
\qbezier(-24.6,17.17)(-25.51,15.87)(-26.27,14.48)
\qbezier(-26.27,14.48)(-27.04,13.09)(-27.65,11.63)
\qbezier(-27.65,11.63)(-28.27,10.17)(-28.72,8.66)
\qbezier(-28.72,8.66)(-29.18,7.14)(-29.48,5.58)
\qbezier(-29.48,5.58)(-29.77,4.03)(-29.9,2.45)
\qbezier(-29.9,2.45)(-30.03,0.87)(-29.99,-0.72)
\qbezier(-29.99,-0.72)(-29.95,-2.3)(-29.75,-3.87)
\qbezier(-29.75,-3.87)(-29.54,-5.44)(-29.18,-6.98)
\qbezier(-29.18,-6.98)(-28.81,-8.52)(-28.28,-10.02)
\qbezier(-28.28,-10.02)(-27.75,-11.51)(-27.07,-12.94)
\qbezier(-27.07,-12.94)(-26.38,-14.37)(-25.55,-15.72)
\qbezier(-25.55,-15.72)(-24.72,-17.07)(-23.76,-18.32)
\qbezier(-23.76,-18.32)(-22.79,-19.58)(-21.69,-20.72)
\qbezier(-21.69,-20.72)(-20.6,-21.87)(-19.39,-22.89)
\qbezier(-19.39,-22.89)(-18.18,-23.91)(-16.87,-24.81)
\qbezier(-16.87,-24.81)(-15.56,-25.7)(-14.17,-26.44)
\qbezier(-14.17,-26.44)(-12.77,-27.19)(-11.3,-27.79)
\qbezier(-11.3,-27.79)(-9.84,-28.39)(-8.31,-28.83)
\qbezier(-8.31,-28.83)(-6.79,-29.26)(-5.23,-29.54)
\qbezier(-5.23,-29.54)(-3.67,-29.82)(-2.09,-29.93)
\qbezier(-2.09,-29.93)(-0.51,-30.04)(1.07,-29.98)
\qbezier(1.07,-29.98)(2.66,-29.92)(4.22,-29.7)
\qbezier(4.22,-29.7)(5.79,-29.48)(7.33,-29.09)
\qbezier(7.33,-29.09)(8.87,-28.7)(10.35,-28.16)
\qbezier(10.35,-28.16)(11.84,-27.61)(13.26,-26.91)
\qbezier(13.26,-26.91)(14.68,-26.21)(16.02,-25.36)
\qbezier(16.02,-25.36)(17.36,-24.52)(18.6,-23.54)
\qbezier(18.6,-23.54)(19.85,-22.55)(20.98,-21.45)
\qbezier(20.98,-21.45)(22.11,-20.34)(23.12,-19.12)
\qbezier(23.12,-19.12)(24.13,-17.9)(25.,-16.58)
\qbezier(25.,-16.58)(25.88,-15.26)(26.61,-13.85)
\qbezier(26.61,-13.85)(27.34,-12.45)(27.92,-10.97)
\qbezier(27.92,-10.97)(28.5,-9.5)(28.92,-7.97)
\qbezier(28.92,-7.97)(29.34,-6.44)(29.6,-4.88)
\qbezier(29.6,-4.88)(29.86,-3.32)(29.95,-1.73)
\qbezier(29.95,-1.73)(30.01,-0.72)(30.,0.3)
}
\title{Geometric intersections of loops on surfaces
\footnote{This work was supported by the Natural Science Foundation of Beijing (No. 1202007), and NSF of China (Nos. 12071309, 11961131004).}}
\author[a]{Ying Gu}
\author[b]{Xuezhi Zhao}
\affil[a]{
Institute of Mathematics and Physics, Beijing Union University, Beijing 100101, China}
\affil[b]{
Department of Mathematics, Capital Normal University, Beijing 100048, China}
\begin{document}

\maketitle

\begin{abstract}
Based on Nielsen fixed point theory and Gr\"{o}bner-Shirshov basis, we obtain a simple approach to compute geometric intersection numbers and self-intersection geometric numbers of loops on surfaces.

AMS Subject classification (2010): {55M20, 16S15, 13P10, 57M05}

Keywords: {loops on surfaces, geometric intersections, Gr\"{o}bner-Shirshov basis}

\end{abstract}

\section{Introduction}

Given two loops $\varphi$ and $\psi$ on a compact surface $F$, it is natural to ask: what is the minimal intersection number of loops in the homotopy classes of $\varphi$ and $\psi$? This number is usually called {\em the geometric intersection number}:
$$\myi(\varphi, \psi) = \min \{\sharp (\varphi' \cap \psi') \mid \varphi'\simeq \varphi, \psi'\simeq\psi, \varphi'\pitchfork \psi' \}.$$
Here, $\simeq$ means a free homotopy, and $\pitchfork$ means a transversal intersection. Similarly, we may consider the self-intersection number:
$$\mysi(\varphi) = \min \{\sharp \mbox{ double poins of } \varphi'  \mid \varphi'\simeq \varphi, \varphi'\pitchfork \varphi' \}.$$
Historically, a simple but more essential question was asked: Given a loop $\varphi$ on $F$, can we homotope it into a simple loop? i.e. can we decide whether $\mysi(\varphi)=0$ or not?

This kind of problem was first considered by M. Dehn \cite{Dehn}. Much later, B. Reinhart \cite{Reinhart1962} obtained an algorithm to decide if a loop class on a compact surface contains a simple loop, and also an algorithm to count the minimal intersection number of two loops on a surface.  The key point is the observation: a loop on the surface is simple if and only if any two liftings on the universal covering, which is the Poincar\'{e} disk, have no separating end-points on the circle of infinity. The method to compute the positions of end-points of lifting curves in \cite{Reinhart1962} is essentially a numerical one, and hence it is difficult to make such a computation when the lengths of loops are large. After that people still try to ask for more efficient way to understand the geometric intersection numbers of loops, such as \cite{Hass-Scott} and \cite{Paterson}. Some results about relations between lengths and self-intersections can be found in \cite{Humphries}.

Another algorithm was described by D. Chillingworth \cite{Ch1, Ch2, Ch3}, based on the reduction procedure on elements in $\pi_1(F)$ given by H. Zieschang \cite{Zieschang1965} and winding number introduced by B. Reinhart \cite{Reinhart1960}. In this direction, J. Birman and C. Series \cite{BS1984, BS1985} deal with more general curves.  M. Cohen and M. Lustig \cite{Marshall-Lustig1987, Lustig1987II} and S. P. Tan \cite{Tan1996} gave several combinatorial algorithms to determine minimal geometric intersection numbers of two loops. There is some overlap between the last two works. Hyperbolic
geometry in dimension $2$ was used in a significant way because the minimal intersection happens on the geodesic loops or their small  perturbation. Here, people mainly deal with the surface $F$ having non-empty boundary, where the minimal representation in $\pi_1(F)$ of a loop is unique because $\pi_1(F)$ is a free group. Even very recently, there are still some new algorithms, see \cite{Daci} and \cite{DL2019}. Moreover, Arettines \cite{Arettines2015} presents a purely combinatorial algorithm which produces a representative of given homotopy class with the minimal self-intersection.

The computation of self-intersections of loops on closed surface becomes more difficult. The reason is that the fundamental group  is no longer free.
One attempt at such a computation is contained in \cite{Lustig1987II} giving a
combinatorial algorithm about geometric intersection numbers of arbitrary two loops on an orientable closed surface. The computation of self-intersection in this situation seems to be unclear.

In stead of giving a precise value, people also try to look for some accessible invariants to estimate geometric self-intersection number. One such invariant is the so-called Goldman bracket, which is a structure on the vector space generated by the free homotopy classes of oriented loops on an oriented surface, see \cite{Goldman1986}. M. Chas \cite{Chas2004, Chas2010} gives a combinatorial group theory description of the terms of this kind of Lie bracket and proves that the bracket of two loops has as many terms, counted with multiplicity, as the minimal number of their intersection points. Soon after that, she proved that if a class is chosen at random among all classes of $m$ letters, then for very large $m$ the distribution of the self-intersection number approaches the Gaussian distribution \cite{Chas2012}. She also noticed J. Birman's observation in \cite{BS1985}: very few closed geodesics are simple, see also \cite{Rivin}. Of course, the surface in consideration is assumed to have boundary, and hence as an element in free group, the number of letters of a word makes sense. This new and large
scale treatment may be one source of the significant work of M. Mirzakhani on the asymptotic of rate of the number of simple loops and that of other loops \cite{Mirzakhan2008}. Thus, the determination of intersections of loops on a surface is a classical problem in early stage of geometric topology, is easy to be understood but a little hard to handle, and also has very close relation with modern mathematics.

%In this paper, we shall provide a systematic and simple solution to the determination of geometric %intersection numbers and self-intersection numbers of loops on oriented closed surfaces. Our integration %have two parts: Nielsen fixed point theory and \gr basis.

In this paper, we will present a systematic and straightforward method for determining geometric intersection numbers and self-intersection numbers of loops on oriented closed surfaces. Nielsen fixed point theory and \gr basis are two aspects of our integration.

Nielsen fixed point theory was named after its founder Jakob Nielsen. In 1921, he obtained the minimal number of fixed points in any isotopy class of self-homeomorphisms of the torus.
According to the behavior of lifting of homeomorphisms on the universal covering space, he classified fixed points into various classes. The crucial problem is the estimation of number of fixed points in a given homotopy class of self-maps, see \cite{Jiangbook}. Based on our prior work in \cite{guying}, we put the intersections of loops into generalized Nielsen theory, and obtain a necessary and sufficient condition that two intersections can be cancelled or combined with each other.

Gr\"{o}bner bases were introduced in 1965, together with an algorithm to compute them, by B. Buchberger in his Ph.D. thesis \cite{Buchberger}. He named them after his advisor W. Gr\"{o}bner. Now it becomes one of the main practical tools for solving systems of polynomial equations and computing the images of algebraic varieties. Since the natural connection between membership of an ideal and the word problem for a group, Gr\"{o}bner base was applied into group theory, and was named as ``rewriting system'', see \cite{Madlener1989} for more details. Some people use the notation \gr basis to indicate the non-commutative version, because the work of Shirshov \cite{Shirshov}. When we are given a presentation of group, there is no finite \gr basis in general, and hence Buchberger's algorithm may not terminate.

In this paper we obtain a \gr basis, written as $D$, of the closed orientable surface group in a special presentation. Each loop can be written as a cyclically $D$-reduced word, which can be regarded as an algebraic version of a geodesic loop. If two words determining two loops on a surface are given, one can obtain the corresponding cyclically $D$-reduced words by a classical reduction procedure. By using Nielsen theory, we show that the geometric intersection numbers can be read  directly from two cyclically $D$-reduced words. The self-intersection number of a loop can be obtained in a similar way. Thus, we give a clear answer of the long-standing question: how to compute the minimal intersections and self-intersections of loop classes on closed orientable surfaces. Our method still works for situation of a surface with boundary, but more techniques seem to be needed in the situation of a closed non-orientable surface.

Our paper is organized as follows. In section 2, we convert geometric intersections into common value pairs (the pre-image of intersections). With the help of hyperbolic geometry, we give a description of the set of common value pairs of two piecewise-geodesic loops on the surface. By using a special generating set for the surface group, we obtain a \gr basis in section 3. In section 4, we shall show that if loops are piecewise-geodesic, then the numbers of common value classes  can be determined based on $D$-cyclical reduction of our \gr basis. Section 5 deals with the indices of common value classes. The numbers of common value classes with non-zero indices can be compute for all loops. This number is proved, in Section 6, to be exactly the geometric intersection number of two loops. Similar results for geometric self-intersection are also given. In final section, we give an example to explain our practical computation.

% ----------------------------------------------------------------

\section{Intersections of loops on surfaces}

In this section, we recall some basic materials about $2$-dimensional hyperbolic geometry. Some ideas of common value pairs are also given, especially in the case of loops on surfaces. We fix some notations for further use in this paper.

Let $\varphi, \psi\colon X\to Y$ be two maps. The set of common value pairs $\cvp(\varphi, \psi)$ of $\varphi$ and $\psi$ is defined to be the set
$$(\varphi\times \psi)^{-1}(\dg{Y})=\{(u, v)\in X^2 \mid \varphi(u) = \psi(v)\}$$
(See \cite[Definition 4.1]{guying}), where $\dg{Y}$ is the diagonal of $Y^2$.  We assume that $X$ and $Y$ have  their universal covering $p_X\colon \tilde X\to X$ and $p_Y\colon \tilde Y\to Y$, respectively. We write $\deck(\tilde X)$ and $\deck(\tilde Y)$ for the deck transformation groups of two universal coverings.

\begin{Proposition}(See \cite[Proposition 4.2, 4.6]{guying})\label{prop-GZ}
Fix a lifting $\tilde \varphi$ of $\varphi$ and a lifting $\tilde \psi$ of $\psi$. Then the set $\cvp(\varphi, \psi)$ of common value pairs of $\varphi$ and $\psi$ is equal to $\cup_{\gamma \in \deck(\tilde Y)} p_X\times p_X (\cvp(\tilde \varphi, \gamma \tilde \psi))$, and for any two elements $\alpha, \beta \in \deck(\tilde Y)$, the following three statements are equivalent:

(1) $p_X\times p_X (\cvp(\tilde \varphi, \alpha \tilde \psi))\cap p_X\times p_X (\cvp(\tilde \varphi, \beta \tilde \psi))\ne \emptyset$;

(2) $p_X\times p_X (\cvp(\tilde \varphi, \alpha \tilde \psi)) = p_X\times p_X (\cvp(\tilde \varphi, \beta \tilde \psi))\ne \emptyset$;

(3) $\beta = \tilde \varphi_{D}(\delta)\alpha \tilde \psi_{D}(\varepsilon)$ for some $\delta, \varepsilon\in \deck(\tilde X)$.
\end{Proposition}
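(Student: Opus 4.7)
The plan is twofold, mirroring the two assertions. For the decomposition $\cvp(\varphi,\psi)=\bigcup_{\gamma\in\deck(\tilde Y)}(p_X\times p_X)(\cvp(\tilde\varphi,\gamma\tilde\psi))$, I would apply the lifting property directly. Given $(u,v)$ with $\varphi(u)=\psi(v)$, choose any lifts $\tilde u,\tilde v\in\tilde X$. Since $p_Y\tilde\varphi(\tilde u)=\varphi(u)=\psi(v)=p_Y\tilde\psi(\tilde v)$, there is a unique $\gamma\in\deck(\tilde Y)$ with $\tilde\varphi(\tilde u)=\gamma\tilde\psi(\tilde v)$, so $(\tilde u,\tilde v)\in\cvp(\tilde\varphi,\gamma\tilde\psi)$ projects to $(u,v)$. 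The reverse inclusion is immediate upon applying $p_Y$ to the defining equality.

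For the equivalence of (1), (2), (3), clearly (2)$\Rightarrow$(1), so I would prove (1)$\Rightarrow$(3)$\Rightarrow$(2). For (1)$\Rightarrow$(3), assume some $(u,v)$ lies in both images. Lift it to witnesses $(\tilde u_1,\tilde v_1)\in\cvp(\tilde\varphi,\alpha\tilde\psi)$ and $(\tilde u_2,\tilde v_2)\in\cvp(\tilde\varphi,\beta\tilde\psi)$ sharing the same $p_X\times p_X$-image, so there exist $\delta,\varepsilon\in\deck(\tilde X)$ with $\tilde u_2=\delta\tilde u_1$ and $\tilde v_2=\varepsilon\tilde v_1$. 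Using the defining identities $\tilde\varphi\circ\delta=\tilde\varphi_{D}(\delta)\circ\tilde\varphi$ and $\tilde\psi\circ\varepsilon=\tilde\psi_{D}(\varepsilon)\circ\tilde\psi$ to rewrite $\tilde\varphi(\tilde u_2)=\beta\tilde\psi(\tilde v_2)$, and substituting $\tilde\varphi(\tilde u_1)=\alpha\tilde\psi(\tilde v_1)$, I would obtain the pointwise equality $\tilde\varphi_{D}(\delta)\alpha\tilde\psi(\tilde v_1)=\beta\tilde\psi_{D}(\varepsilon)\tilde\psi(\tilde v_1)$. Because the deck action of $\deck(\tilde Y)$ on $\tilde Y$ is free, this single coincidence upgrades to $\tilde\varphi_{D}(\delta)\alpha=\beta\tilde\psi_{D}(\varepsilon)$, which rearranges to (3) after inverting $\varepsilon$ and relabeling.

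For (3)$\Rightarrow$(2), given $\beta=\tilde\varphi_{D}(\delta)\alpha\tilde\psi_{D}(\varepsilon)$, I would check that the assignment $(\tilde u,\tilde v)\mapsto(\delta\tilde u,\varepsilon^{-1}\tilde v)$ carries $\cvp(\tilde\varphi,\alpha\tilde\psi)$ into $\cvp(\tilde\varphi,\beta\tilde\psi)$ by a direct calculation using the same two identities in the forward direction; the inverse assignment then handles the reverse inclusion. Since both assignments commute with $p_X\times p_X$, the two images in $X^2$ coincide. The main obstacle I foresee is purely notational bookkeeping: the placements of the inverses and the left/right conventions for $\tilde\varphi_{D}$ and $\tilde\psi_{D}$ must be tracked carefully so that (3) appears in precisely the stated form. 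The only substantive geometric input is the freeness of the deck action, which is what lets a single pointwise coincidence be promoted to an equation in $\deck(\tilde Y)$; everything else is formal manipulation of the covering-space identities.
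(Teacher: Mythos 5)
The paper does not actually prove this proposition; it is quoted from the authors' earlier work \cite[Propositions 4.2, 4.6]{guying}, so there is no in-paper argument to compare against. Your proof is the standard covering-space argument (the exact analogue of the decomposition of the fixed point set into lifting classes in Nielsen theory, cf. \cite[Ch.~I]{Jiangbook}), and it is correct: the decomposition follows from simple transitivity of $\deck(\tilde Y)$ on fibers of $p_Y$; (1)$\Rightarrow$(3) follows by comparing the two lifted coincidence relations at a common point and using that a deck transformation is determined by its value at one point; and your map $(\tilde u,\tilde v)\mapsto(\delta\tilde u,\varepsilon^{-1}\tilde v)$ does carry $\cvp(\tilde\varphi,\alpha\tilde\psi)$ bijectively onto $\cvp(\tilde\varphi,\beta\tilde\psi)$ and commutes with $p_X\times p_X$, giving equality of the projected sets.

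One caveat, which is really a defect of the statement as quoted rather than of your argument: condition (3) does not by itself force non-emptiness, whereas (1) and (2) both assert $\ne\emptyset$. If $\cvp(\tilde\varphi,\alpha\tilde\psi)=\emptyset$, then (3) can hold (e.g.\ $\beta=\alpha$, $\delta=\varepsilon=1$) while (2) fails, so your implication (3)$\Rightarrow$(2) only yields equality of the two projected sets, not their non-emptiness. The honest formulation is that (3) is equivalent to ``the two projected sets coincide,'' and all three are equivalent under the standing hypothesis that the class determined by $\alpha$ is non-empty; you should state that hypothesis explicitly (or restrict the equivalence of (1) and (2) with (3) to that case). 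Everything else in your write-up is sound.
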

The homomorphism $\tilde \varphi_{D}\colon \deck(\tilde X)\to \deck(\tilde Y)$ is  determined by the relation $\tilde \varphi(\eta \tilde x) = \tilde \varphi_{D}(\eta)\tilde \varphi(\tilde x)$ for all $\eta\in \deck(\tilde X)$ and $\tilde x\in \tilde X$, and $\tilde \psi_{D}$ is defined similarly (see \cite[Ch. III]{Jiangbook} for more details). If $\varphi=\psi$, we shall say self-common value pairs, self-common value classes, etc.

It should be mentioned that the set $\cvp(\varphi, \psi)$  of common value pairs of $\varphi$ and $\psi$ is a disjoint union of common value classes. A non-empty subset $p_X\times p_X (\cvp(\tilde \varphi, \alpha \tilde \psi))$ of $\cvp(\varphi, \psi)$ is said to be the  common value class determined by $(\tilde \varphi, \alpha \tilde \psi)$, or by $\alpha$ if two liftings $\tilde \varphi$ and $\tilde \psi$ are clearly chosen in advance.

Now, we consider a special case of common value pairs, where $\varphi$ and $\psi$ are maps from the circle $S^1$ to an orientable closed surface $F_g$ of genus $g\ge 2$. Recall that

\begin{Lemma}(see \cite[\S 1]{Casson} and \cite[\S 20]{GTM104})
There is a canonical isomorphism from $\mathrm{PSL}(2, \mathbb{\mathbb{R}})$ to orientation-preserving isometry group of $\mathbb{H}^2$, which is given by
$$M=\left(
\begin{array}{cc}
a & b\\
c & d\\
\end{array}
\right)
\mapsto
f_M(z) = \frac{az+b}{cz+d}.
$$
Moreover, we have that for any two elements $M$ and $M'$ in $\mathrm{PSL}(2, \mathbb{\mathbb{R}})$, $f_{M'}f_{M} = f_{MM'}$.
\end{Lemma}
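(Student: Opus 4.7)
The plan is to verify the standard upper half-plane model dictionary and identify the full orientation-preserving isometry group with $\mathrm{PSL}(2,\mathbb{R})$ by a transitivity argument. Throughout I will use the upper half-plane model $\mathbb{H}^2=\{z\in\mathbb{C}:\mathrm{Im}(z)>0\}$ with metric $ds^2=(dx^2+dy^2)/y^2$, so that $\mathrm{Isom}^+(\mathbb{H}^2)$ sits inside the (real-analytic) conformal maps of $\mathbb{C}\cup\{\infty\}$ preserving $\mathbb{H}^2$.

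First I would check that the formula $f_M(z)=(az+b)/(cz+d)$ really defines an action of $\mathrm{SL}(2,\mathbb{R})$ on $\mathbb{H}^2$. The key computation is $\mathrm{Im}(f_M(z))=\mathrm{Im}(z)/|cz+d|^2$, which is positive whenever $\mathrm{Im}(z)>0$ and $M\in \mathrm{SL}(2,\mathbb{R})$; here the identity $\det M=1$ is what makes the numerator $\mathrm{Im}(z)$ rather than $(ad-bc)\mathrm{Im}(z)$. Composing two such Möbius transformations by direct substitution yields the matrix product (the composition law can be packaged either way according to the convention $f_{M'}f_M=f_{MM'}$ stated in the paper), and the kernel of $M\mapsto f_M$ is readily seen to be $\{\pm I\}$: if $(az+b)/(cz+d)=z$ for all $z$, then $cz^2+(d-a)z-b\equiv 0$, forcing $b=c=0$ and $a=d=\pm 1$. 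Hence $f$ descends to an injection $\mathrm{PSL}(2,\mathbb{R})\hookrightarrow \mathrm{Isom}^+(\mathbb{H}^2)$.

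Next I would verify that each $f_M$ is an orientation-preserving isometry. The metric check reduces, after computing $|f_M'(z)|=1/|cz+d|^2$ and $\mathrm{Im}(f_M(z))=\mathrm{Im}(z)/|cz+d|^2$, to the statement that the pullback of $|dw|^2/\mathrm{Im}(w)^2$ equals $|dz|^2/\mathrm{Im}(z)^2$, which is immediate. Orientation preservation follows because $f_M$ is holomorphic with nonvanishing derivative on $\mathbb{H}^2$, so its real Jacobian $|f_M'(z)|^2$ is positive.

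The main obstacle, and the last step, is surjectivity onto $\mathrm{Isom}^+(\mathbb{H}^2)$. The standard strategy is transitivity on pointed frames: $\mathrm{PSL}(2,\mathbb{R})$ acts transitively on the unit tangent bundle $T^1\mathbb{H}^2$. Given any $g\in \mathrm{Isom}^+(\mathbb{H}^2)$, pick a basepoint $z_0=i$ with a chosen unit tangent vector $v_0$, and produce a matrix $M$ so that $f_M(i)=g(i)$ and $(df_M)_i(v_0)=(dg)_i(v_0)$; this uses explicit generators — the translations $z\mapsto z+t$, the dilations $z\mapsto \lambda z$ ($\lambda>0$), and the rotation $z\mapsto -1/z$ about $i$ — which together realize any prescribed image of a frame. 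Then $f_M^{-1}\circ g$ fixes a point of $T^1\mathbb{H}^2$, and since an orientation-preserving isometry of a Riemannian surface that fixes a point and a tangent direction there must be the identity (by uniqueness of geodesics and local isometry from the tangent plane), we conclude $g=f_M$. Combined with the injectivity and homomorphism property above, this gives the desired canonical isomorphism and the stated composition formula.
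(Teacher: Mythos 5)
Your proof is correct and complete; note that the paper itself offers no proof of this lemma, citing it directly to Casson--Bleiler and to Dubrovin--Novikov--Fomenko, so there is no internal argument to compare against. Your outline (invariance of $\mathbb{H}^2$ via $\mathrm{Im}(f_M(z))=\mathrm{Im}(z)/|cz+d|^2$, kernel $\{\pm I\}$, isometry via the pullback computation, and surjectivity via transitivity on the unit tangent bundle plus rigidity of a pointed-frame-fixing isometry) is the standard textbook proof and each step goes through. The only point worth making explicit rather than waving at is the composition convention: direct substitution gives $f_{M'}\circ f_{M}=f_{M'M}$, so the paper's identity $f_{M'}f_{M}=f_{MM'}$ is only an isomorphism (rather than an anti-isomorphism) if the juxtaposition $f_{M'}f_{M}$ is read as ``apply $f_{M'}$ first, then $f_{M}$''; this is purely notational and does not affect the mathematics.
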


Using the classical identification of the fundamental group of a space and the deck transformation group of its universal covering, we have

\begin{Lemma}\label{pi1}(see \cite[\S 20]{GTM104})
The closed oriented surface $F_g$ of genus $g\ge 2$ can be regarded as the orbit space $\mathbb{H}^2/\Gamma_g$, where $\Gamma_g$ is a discrete subgroup of $\mathrm{Iso}^+(\mathbb{H}^2)$  generated by $\delta_1, \delta_2, \ldots, \delta_{2g}$, and each $\delta_j$ is the hyperbolic translation determined by $M_g^{-j}Q_gM_g^j$, in which
$$
M_g=\left(
\begin{array}{cc}
  \cos \frac{(2g-1)\pi}{4g} & \sin \frac{(2g-1)\pi}{4g}\\
 -\sin \frac{(2g-1)\pi}{4g} & \cos \frac{(2g-1)\pi}{4g}\\
\end{array}
\right),
\
Q_g=
\left(
\begin{array}{cc}
  \frac{\cos \frac{\pi}{4g} +  \sqrt{\cos \frac{\pi}{2g}}}{\sin\frac{\pi}{4g}} & 0\\
   0& \frac{\sin\frac{\pi}{4g}}{\cos \frac{\pi}{4g} +  \sqrt{\cos \frac{\pi}{2g}}}
\end{array}
\right).
$$
Hence, the fundamental group $\pi_1(F_g, y_0) $ of $F_g$ has following presentation
\begin{equation}\label{presentation}
\pi_1(F_g, y_0) = \langle c_1, c_2, \ldots, c_{2g}\mid c_1 c_2 \cdots c_{2g} = c_{2g} \cdots c_2 c_1\rangle,
\end{equation}
where $c_j$ is the loop class determined by $p(\omega_j)$ for $j=1,2,\ldots, 2g$, in which $\omega_j$ is the unique geodesic from $\tilde y_0=i$ to $\delta_j(\tilde y_0)$, $p_{\mathbb{H}^2}\colon \mathbb{H}^2\to \mathbb{H}^2/\Gamma_g=F_g$ is the universal covering map and $y_0=p_{\mathbb{H}^2}(\tilde y_0)$.
\end{Lemma}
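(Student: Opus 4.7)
The plan is to realize $F_g$ as the orbit space $\mathbb{H}^2/\Gamma_g$, where $\Gamma_g$ is the Fuchsian group generated by the side-pairings of a regular hyperbolic $4g$-gon $P$, and then read off the presentation via Poincar\'e's polygon theorem. I would construct $P$ centered at $i\in\mathbb{H}^2$ with all interior angles equal to $\pi/(2g)$: such a polygon exists because, as the circumradius increases from $0$ to $\infty$, the common interior angle of a regular hyperbolic $4g$-gon decreases continuously from the Euclidean value $(4g-2)\pi/(4g)$ to $0$, hitting $\pi/(2g)$ at a unique radius. Using the preceding lemma, a direct computation gives $f_{M_g}(i)=i$ and $f'_{M_g}(i)=e^{i(2g-1)\pi/(2g)}$, so $f_{M_g}$ is a rotation of $\mathbb{H}^2$ about $i$; since $\gcd(2g-1,4g)=1$, it generates the order-$4g$ cyclic symmetry group of $P$. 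The conjugate isometries $M_g^{-j}Q_gM_g^j$ are therefore translations whose axes are the rotates of the imaginary axis under $f_{M_g^{-j}}$, and these axes pass perpendicularly through the midpoints of $2g$ specific sides of $P$, one from each identification pair.

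Next I would verify that $Q_g$, acting as $z\mapsto\lambda^2 z$ with $\lambda=(\cos\tfrac{\pi}{4g}+\sqrt{\cos\tfrac{\pi}{2g}})/\sin\tfrac{\pi}{4g}$, is the hyperbolic translation along the imaginary axis whose translation length equals twice the hyperbolic distance from $i$ to the midpoint of the side of $P$ whose perpendicular bisector is that axis. The formula for $\lambda$ is pinned down by the hyperbolic right triangle with vertices $i$, a vertex of $P$, and the midpoint of an adjacent side, after imposing the angle condition $\pi/(2g)$ at the vertex; only a single application of the hyperbolic law of cosines is needed. Once $\lambda$ is matched, each $\delta_j=f_{M_g^{-j}Q_gM_g^j}$ is a side-pairing identifying the two sides of $P$ that carry the letters $c_j$ and $c_j^{-1}$ in the boundary word $c_1c_2\cdots c_{2g}c_1^{-1}c_2^{-1}\cdots c_{2g}^{-1}$.

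Finally I would apply Poincar\'e's polygon theorem. The sole cycle condition requires that the angles at every vertex cycle sum to $2\pi$; a direct check shows that the chosen identification pattern fuses all $4g$ vertices into a single cycle, contributing exactly $4g\cdot\pi/(2g)=2\pi$. The theorem then yields a free, properly discontinuous action of $\Gamma_g=\langle\delta_1,\ldots,\delta_{2g}\rangle$ on $\mathbb{H}^2$ with $P$ as fundamental domain, together with the unique defining relation $\delta_1\delta_2\cdots\delta_{2g}\delta_1^{-1}\delta_2^{-1}\cdots\delta_{2g}^{-1}=1$. Rearranging via $(\delta_1^{-1}\delta_2^{-1}\cdots\delta_{2g}^{-1})^{-1}=\delta_{2g}\cdots\delta_2\delta_1$ gives $\delta_1\delta_2\cdots\delta_{2g}=\delta_{2g}\cdots\delta_2\delta_1$. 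Covering-space theory identifies $\pi_1(F_g,y_0)$ with $\Gamma_g$ and matches the homotopy class $c_j$ of $p(\omega_j)$ with $\delta_j$, yielding the stated presentation; the Euler characteristic count ($1$ vertex, $2g$ edges, $1$ face) gives $\chi=2-2g$, confirming the genus. The main obstacle is the trigonometric verification that $\lambda$ in $Q_g$ exactly matches the required translation length; once that is pinned down, everything else is standard Fuchsian-group and covering-space bookkeeping.
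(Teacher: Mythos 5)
Your argument is correct and is essentially the proof the paper delegates to \cite[\S 20]{GTM104}: the regular $4g$-gon centered at $i$ with vertex angle $\pi/(2g)$, side-pairing translations conjugate under the order-$4g$ rotation $f_{M_g}$ (whose derivative at $i$ is indeed $e^{i(2g-1)\pi/(2g)}$, a primitive $4g$-th root of unity), and Poincar\'e's polygon theorem with the single vertex cycle contributing $4g\cdot\pi/(2g)=2\pi$. The one computation you flag as the main obstacle does close: the apothem $\rho$ of that polygon satisfies $\cosh\rho=\cot\frac{\pi}{4g}$ by the angular law of cosines for the right triangle (center, side-midpoint, vertex), hence $e^{\rho}=\cosh\rho+\sinh\rho=\bigl(\cos\frac{\pi}{4g}+\sqrt{\cos\frac{\pi}{2g}}\,\bigr)/\sin\frac{\pi}{4g}$, which is exactly the upper-left entry of $Q_g$, so $z\mapsto\lambda^2 z$ translates along the imaginary axis by $2\rho$ as the opposite-side pairing requires.
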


In pure group theory, an isomorphism from the canonical presentation $\langle a_1, b_1,\ldots, a_g, b_g\mid[a_1,b_1]\cdots[a_g,b_g]=1\rangle$ of fundamental group of $F_g$ to the presentation (\ref{presentation}) can be defined by
$$\begin{array}{l}
a_j\mapsto c_{2j-1}\cdots c_{2g}c_{2j-1}^{-1},\
b_j\mapsto c_{2j+1}\cdots c_{2g}c_{2j-1}^{-1},\ \ j=1,2,\ldots, g-1,\\
a_g\mapsto c_{2g-1},\ \ b_g\mapsto c_{2g}.
  \end{array}
$$

By a concrete computation, in Poincar\'{e} model $\mathbb{D}^2$, each hyperbolic translation $\delta_j$ corresponding to the generator $c_j$ of $\pi_1(F_g, y_0)$ in presentation (\ref{presentation}) has a diameter as its translation axis with an attracting fixed point $(-1)^{j}e^{\frac{(j-1)\pi}{2g}i}$ and an expanding fixed point $(-1)^{j-1}e^{\frac{(j-1)\pi}{2g}i}$.
We write \begin{equation}\label{eq-T}
T(c_j)=:(-1)^{j}e^{\frac{(j-1)\pi}{2g}i},\ \ T(c_j^{-1})=:(-1)^{j+1}e^{\frac{(j-1)\pi}{2g}i},
\end{equation}
which are also two ending points on the circle at infinity of the following lifting $\tilde c_j$ of the loop $c_j$.
$$\xymatrix{
 (\mathbb{R}^1, 0) \ar[r]^{\tilde c_j} \ar[d]_{p_{S^1}} & (\mathbb{D}^2, \tilde y_0=0)\ar[d]^p \\
 (S^1, e^0) \ar[r]^{c_j} & (F_g, y_0).
 }
$$
Take $g=2$ as an example, we have the following:
\begin{center}
\setlength{\unitlength}{0.7mm}
\begin{picture}(60,70)(-30,-35)
\mycl
\put(0,0){\vector(-1,0){20}}
\put(-30,0){\line(1,0){60}}
\put(0,0){\vector(1,1){14.1420}}
\put(0,0){\line(1,1){21.2132}}
\put(0,0){\line(-1,-1){21.2132}}
\put(0,0){\vector(0,-1){20}}
\put(0,-30,0){\line(0,1){60}}
\put(0,0){\vector(-1,1){14.1420}}
\put(0,0){\line(-1,1){21.2132}}
\put(0,0){\line(1,-1){21.2132}}

\put(-25,3){\makebox(0,0)[cc]{$\tilde c_1$}}
\put(12,17){\makebox(0,0)[cc]{$\tilde c_2$}}
\put(3,-25){\makebox(0,0)[cc]{$\tilde c_3$}}
\put(-15,19){\makebox(0,0)[cc]{$\tilde c_4$}}
\put(20,-8){\makebox(0,0)[cc]{$\mathbb{D}^2$}}
\put(8,3){\makebox(0,0)[cc]{$\tilde y_0$}}
\put(27,27){\makebox(0,0)[cc]{$T(c_2)$}}
\put(-27,-27){\makebox(0,0)[cc]{$T(c_2^{-1})$}}
\put(21.2132,21.2132){\makebox(0,0)[cc]{$\bullet$}}
\put(-21.2132,-21.2132){\makebox(0,0)[cc]{$\bullet$}}
\put(-27,27){\makebox(0,0)[cc]{$T(c_4)$}}
\put(27,-27){\makebox(0,0)[cc]{$T(c_4^{-1})$}}
\put(-21.2132,21.2132){\makebox(0,0)[cc]{$\bullet$}}
\put(21.2132,-21.2132){\makebox(0,0)[cc]{$\bullet$}}
\put(32,0){\makebox(0,0)[lc]{$T(c_1)$}}
\put(-32,0){\makebox(0,0)[rc]{$T(c_1^{-1})$}}
\put(-30,0){\makebox(0,0)[cc]{$\bullet$}}
\put(30,0){\makebox(0,0)[cc]{$\bullet$}}
\put(0,-32){\makebox(0,0)[ct]{$T(c_3)$}}
\put(0,32){\makebox(0,0)[cb]{$T(c_3^{-1})$}}
\put(0,-30){\makebox(0,0)[cc]{$\bullet$}}
\put(0,30){\makebox(0,0)[cc]{$\bullet$}}
\end{picture}
\end{center}
The relative positions of all $T(\cdot)$'s indicate that those of all generator loops around the base point of $F_g$.

It is obvious that

\begin{Proposition}
Let $w', w''\in \{c_1, c^{-1}_1, \ldots, c_{2g}, c^{-1}_{2g}\}$. Then (1) $w'=w''$ if and only if $T(w')=T(w'')$, (2) $w'^{-1}=w''$ if and only if $-T(w')=T(w'')$.
\end{Proposition}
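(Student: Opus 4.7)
The whole statement reduces to a single fact: the map $T$, viewed as a map from the $4g$-element set $\{c_1^{\pm 1},\ldots,c_{2g}^{\pm 1}\}$ to $\partial\mathbb{D}^2$, is injective, and $T$ swaps a letter with its inverse via the antipodal map $z\mapsto -z$. Once those two ingredients are in place, both (1) and (2) follow immediately, so my plan is to first reduce to them and then verify them by a direct index calculation.

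First, observe from the definition (\ref{eq-T}) that for any $j$ one has $T(c_j^{-1}) = (-1)^{j+1}e^{(j-1)\pi i/(2g)} = -T(c_j)$, and consequently $T(w^{-1})=-T(w)$ for every $w\in\{c_1^{\pm1},\ldots,c_{2g}^{\pm1}\}$. Given this, (1)$\Rightarrow$(2) is formal: if $-T(w')=T(w'')$ then $T(w'^{-1})=T(w'')$, so by (1) we get $w'^{-1}=w''$; conversely if $w'^{-1}=w''$ then $T(w'')=T(w'^{-1})=-T(w')$. In the same way, the nontrivial content of (1) is injectivity of $T$ (the ``only if'' direction is a tautology).

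To prove injectivity, I would rewrite each value $T(w)$ in the normalized form $e^{ik\pi/(2g)}$ with $k\in\{0,1,\ldots,4g-1\}$, and then show that the $4g$ residues $k(w)$ hit every class mod $4g$ exactly once. Splitting $j$ into even/odd cases:
\begin{itemize}
\item For $j$ even, $T(c_j)=e^{i(j-1)\pi/(2g)}$ gives $k=j-1\in\{1,3,\ldots,2g-1\}$, and $T(c_j^{-1})$ gives $k=2g+j-1\in\{2g+1,2g+3,\ldots,4g-1\}$.
\item For $j$ odd, $T(c_j)=-e^{i(j-1)\pi/(2g)}$ gives $k=2g+j-1\in\{2g,2g+2,\ldots,4g-2\}$, and $T(c_j^{-1})$ gives $k=j-1\in\{0,2,\ldots,2g-2\}$.
\end{itemize}
The four arithmetic progressions above partition $\{0,1,\ldots,4g-1\}$, so the $k$-values are pairwise distinct; hence the $4g$ points $T(w)$ are $4g$ distinct vertices of a regular $4g$-gon inscribed in $\partial\mathbb{D}^2$. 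This gives injectivity, completing (1), and (2) then follows from the reduction above.

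The only possible obstacle is purely bookkeeping: making sure the parity split and the $+\pi$ shift coming from the sign $(-1)^j$ line up with the target index set. Since the four progressions are evident and cover $\{0,\ldots,4g-1\}$ exactly once, there is no genuine difficulty, and the picture drawn in the $g=2$ case (eight equally spaced points at angles $k\pi/4$) is already a convincing illustration of the general pattern.
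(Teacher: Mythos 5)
Your proof is correct and is just a careful write-out of the direct verification the paper itself treats as obvious (it offers no proof beyond ``It is obvious that''): the identity $T(w^{-1})=-T(w)$ follows at once from (\ref{eq-T}), and your four arithmetic progressions do partition $\{0,1,\ldots,4g-1\}$, so $T$ is a bijection onto the $4g$-th roots of unity and both claims follow. Nothing further is needed.
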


%Consider a universal covering $p_X: \tilde X\to X$. Fix a base point $\tilde x_0\in p_X^{-1}(x_0)$, it is well-known that there is a canonical isomorphism from %$\deck(\tilde X)$ to $\pi_1(X,x_0)$, sending an element $\alpha \in
%\deck(\tilde X)$ into $\langle p_X \tilde c \rangle \in \pi_1(X, x_0)$, where $\tilde c$ is a path in $\tilde X$ from $\tilde x_0$ to $\alpha \tilde x_0$. In our %case, $\deck(\tilde Y)$ is regarded as an
%\def\rp{\rho}

It is well-known that the set of loop classes in $F_g$ is in one-to-one correspondence with the set of  conjugacy classes in $\pi_1(F_g)$. For our purpose, we use a special kind loops if an element $\pi_1(F_g)$ is given, see bellow.

\begin{Lemma}\label{piecewise-geodesic}
Let $u_1\cdots u_m$ be a word in the letter set $\{c_1, c^{-1}_1,\ldots c_{2g}, c^{-1}_{2g}\}$, giving a non-trivial element in $\pi_1(F_g, y_0)$ according to the presentation  in (\ref{presentation}). Then there is a unique map $\varphi: S^1 \to F_g$, said to be a {\em piecewise-geodesic loop}, such that $[0,1]\ni \lambda \mapsto \varphi(e^{\frac{2\pi(k+\lambda-1)i}{m}})\in F_g$ is the unique geodesic representing $u_k$ for $k=1,2,\ldots, m$, and such that $|\frac{\mathrm{d} \varphi(e^{\frac{2\pi(k+\lambda -1)i}{m}})}{\mathrm{d}\, \lambda}|$ is constant.
\end{Lemma}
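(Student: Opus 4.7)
The plan is to build $\varphi$ by lifting to the universal cover $\mathbb{H}^2$, concatenating hyperbolic geodesic segments there, and projecting back by $p$. First, for each letter $u_k$ I will introduce the isometry $\delta_{u_k}$ defined by $\delta_{c_j}:=\delta_j$ and $\delta_{c_j^{-1}}:=\delta_j^{-1}$, and then set waypoints $v_0:=\tilde y_0=i$ and $v_k:=(\delta_{u_1}\circ\cdots\circ\delta_{u_k})(\tilde y_0)$ for $k=1,\dots,m$. Because every nontrivial element of $\Gamma_g$ acts freely on $\mathbb{H}^2$, consecutive points satisfy $v_{k-1}\neq v_k$, so there is a unique hyperbolic geodesic segment $\omega_k$ in $\mathbb{H}^2$ from $v_{k-1}$ to $v_k$.

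Next, I will define a continuous lift $\tilde\varphi\colon[0,1]\to\mathbb{H}^2$ by requiring, on each subinterval $[(k-1)/m,k/m]$ reparametrized by $\lambda\in[0,1]$ via $\theta=(k-1+\lambda)/m$, that $\tilde\varphi$ be the constant-speed parametrization of $\omega_k$ sending $\lambda=0$ to $v_{k-1}$ and $\lambda=1$ to $v_k$. The key observation is that $\omega_k$ equals $(\delta_{u_1}\circ\cdots\circ\delta_{u_{k-1}})$ applied to the unique geodesic $\omega_{u_k}^{0}$ from $\tilde y_0$ to $\delta_{u_k}(\tilde y_0)$; by Lemma~\ref{pi1}, $p(\omega_{u_k}^{0})$ is the generator loop representing $u_k$, and the extra deck-transformation prefix disappears under $p$. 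Setting $\varphi(e^{2\pi i\theta}):=p(\tilde\varphi(\theta))$ therefore yields a map whose $k$-th arc-restriction is the unique geodesic representative of $u_k$ parametrized with constant $\lambda$-speed, which is exactly the required condition on $\bigl|\tfrac{\mathrm d}{\mathrm d\lambda}\varphi(e^{2\pi(k+\lambda-1)i/m})\bigr|$.

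To finish, I will check that $\varphi$ is well defined and continuous on $S^1$: at each interior joint $\theta=k/m$ both sides agree on $p(v_k)$ by construction, and at $\theta\in\{0,1\}$ the points $v_0=\tilde y_0$ and $v_m=(\delta_{u_1}\circ\cdots\circ\delta_{u_m})(\tilde y_0)$ lie in the same $\Gamma_g$-orbit, so $p(v_0)=p(v_m)=y_0$. Uniqueness of $\varphi$ is immediate: a hyperbolic geodesic between two distinct points is unique, and the constant-speed parametrization of such a segment between prescribed endpoints on $[0,1]$ is unique as well, while the basepoint $\tilde y_0$ pins the initial lift and the remaining waypoints $v_k$ are then forced. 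I do not expect a substantive obstacle; the only point deserving care is the orientation convention, namely verifying that $\delta_{c_j^{-1}}=\delta_j^{-1}$ produces a geodesic whose projection represents $c_j^{-1}$ and not $c_j$, which is direct from the identification in Lemma~\ref{pi1} of $c_j$ with $\delta_j$.
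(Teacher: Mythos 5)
Your construction is correct and is exactly the one the paper has in mind: the paper states this lemma without proof, and its later use of standard liftings (where $\tilde\varphi_S(\tfrac{k}{m})=u_1\cdots u_k(\tilde y_0)$) presupposes precisely your lift--concatenate--project argument, including the freeness of the $\Gamma_g$-action to get distinct consecutive waypoints and the uniqueness of geodesics in $\mathbb{H}^2$. The only point worth making explicit is that the single constant $\bigl|\tfrac{\mathrm d}{\mathrm d\lambda}\varphi\bigr|$, uniform over all $k$, requires that every segment have the same hyperbolic length; this holds because each $\delta_j^{\pm1}$ is conjugate to $\delta_0^{\pm1}$ by an elliptic isometry fixing $\tilde y_0$ (all translation axes pass through $\tilde y_0$), which is exactly the computation $d(i,\delta_j(i))=d(i,\delta_0(i))$ the paper carries out immediately after the lemma.
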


\begin{proof}
It is sufficient to prove the uniqueness, which follows directly from hyperbolicy of $F_g$.
\end{proof}

Since common value pair theory, as a kind of generalization of Nielsen fixed point theory, deals with homotopy invariant, it is sufficient to consider the piecewise-geodesic loops mentioned above. Since each geodesic section is one of $2g$ loops with base point $y_0$, the intersection of any two piecewise-geodesic loops is either the singleton $\{y_0\}$ or a union of geodesic loops representing some generators. Next definition and theorem will describe the pre-image of these intersections, i. e. the set of common value pairs.

\begin{Definition}\label{def-standard-lifting}
Let $\varphi: S^1\to F_g$ be a loop at base point $y_0 = p(0)\in F_g$. A lifting $\tilde \varphi_S: \mathbb{R}^1\to \mathbb{D}^2$ is said to be {\em standard lifting} of $\varphi$ if it fits into:
$$\xymatrix{
 (\mathbb{R}^1, 0) \ar[r]^{\tilde \varphi_S} \ar[d]_{p_{S^1}} & (\mathbb{D}^2, \tilde y_0=0)\ar[d]^p \\
 (S^1, e^0) \ar[r]^{\varphi} & (F_g, y_0)
 }
$$
\end{Definition}

%Next theorem give us the set of common value pair of two piecewise-geodesic loops.

\begin{Theorem}\label{th:cvp-component}
Let $\varphi,\psi: S^1\to F_g$ be two piecewise-geodesic loops which are determined by $u_1\cdots u_m$ and $v_1\cdots v_n$, respectively, where $u_k, v_l\in \{c_1, c^{-1}_1, \ldots, c_{2g}, c^{-1}_{2g}\}$. Then each component of common value set $\cvp(\varphi, \psi)$  is of one following type:
\begin{itemize}
  \item[(1)]  $\{( e^{\frac{2\pi k i}{m}}, e^{\frac{2\pi l i}{n}})\}$,
  \item[(2)]  $\{( e^{\frac{2\pi (k+\lambda) i}{m}}, e^{\frac{2\pi (l+\lambda) i}{n}})\mid 0\le \lambda \le q\}$ for some positive integer $q$,
  \item[(3)]  $\{( e^{\frac{2\pi (k+\lambda) i}{m}}, e^{\frac{2\pi (l+q-\lambda) i}{n}})\mid 0\le \lambda \le q\}$ for some positive integer $q$,
  \item[(4)]  $\{( e^{\frac{2\pi (k+\lambda) i}{m}}, e^{\frac{2\pi (l+\lambda) i}{n}})\mid -\infty <\lambda < +\infty\}$,
  \item[(5)]  $\{( e^{\frac{2\pi (k+\lambda) i}{m}}, e^{\frac{2\pi (l-\lambda) i}{n}})\mid -\infty <\lambda < +\infty\}$,
\end{itemize}
where $k$ and $l$ are integers. Moreover, the common value class containing the pair $( e^{\frac{2\pi k i}{m}}, e^{\frac{2\pi l i}{n}})$ is determined by $((u_1\cdots u_k)^{-1}\tilde \varphi_S, (v_1\cdots v_l)^{-1}\tilde \psi_S)$, where $\tilde \varphi_S$ and $\tilde \psi_S$ are respectively standard liftings of $\varphi$ and $\psi$, and $u_p, v_q$ are regarded as hyperbolic translations in $\mathbb{D}^2$.
\end{Theorem}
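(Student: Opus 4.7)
The plan is to pull the problem back to the universal cover $\mathbb{H}^2$ and reduce it to a local analysis of pairs of hyperbolic geodesic segments. By Proposition~\ref{prop-GZ}, the set $\cvp(\varphi,\psi)$ splits as a union over deck transformations $\gamma\in\Gamma_g$, each piece being the image under $p_{S^1}\times p_{S^1}$ of $\cvp(\tilde\varphi_S,\gamma\tilde\psi_S)\subseteq\mathbb{R}\times\mathbb{R}$. Both $\tilde\varphi_S$ and $\gamma\tilde\psi_S$ are piecewise-geodesic with $m$ resp.\ $n$ arcs, all of the common length $\ell=2\ln(\cos\tfrac{\pi}{4g}+\sqrt{\cos\tfrac{\pi}{2g}})-2\ln\sin\tfrac{\pi}{4g}$ and traversed at constant speed.

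First I would carry out a local analysis at a point $(u_0,v_0)\in\cvp(\varphi,\psi)$. Lift so that $\tilde\varphi_S(\tilde u_0)=\gamma\tilde\psi_S(\tilde v_0)$; near the chosen parameters each lift is a geodesic arc of $\mathbb{H}^2$. Two complete hyperbolic geodesics in $\mathbb{H}^2$ either meet transversally in exactly one point, are disjoint, or coincide, so locally exactly two pictures arise: (i) the two arcs lie on distinct geodesics, in which case $(u_0,v_0)$ is an isolated common value pair and its component is a singleton (case~(1)); or (ii) the two arcs lie on the same geodesic of $\mathbb{H}^2$ and coincide on a nondegenerate interval. Because both arcs have length $\ell$ and are parameterized affinely, in case~(ii) the relation between the two $S^1$-parameters is forced to be $\tilde u=\tilde v+\mathrm{const}$ or $\tilde u=-\tilde v+\mathrm{const}$, producing the two orientation types visible in cases~(2)–(5).

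Second I would extend each such coincidence maximally in $S^1\times S^1$. In case~(ii) one enlarges the component until a vertex of $\varphi$ or of $\psi$ is reached; because the two overlapping arcs have equal length $\ell$, once they coincide their endpoints coincide too, so the vertices of $\varphi$ and of $\psi$ match simultaneously along the entire overlap. At such a shared vertex the coincidence either extends to the next pair of arcs—which forces the relevant next letters of $u_1\cdots u_m$ and $v_1\cdots v_n$ to be compatible under the map $T$ of~(\ref{eq-T})—or it terminates. Iterating in both directions, the component is either a closed interval with $\lambda$ ranging over $[0,q]$ for some $q\in\mathbb{Z}_{>0}$ (cases~(2) and~(3)) or, when the coincidence never terminates, $\lambda$ runs over all of $\mathbb{R}$ (cases~(4) and~(5)). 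For the ``moreover'' assertion, note that $\tilde\varphi_S(k/m)=\delta_{u_1\cdots u_k}(\tilde y_0)$ and $\tilde\psi_S(l/n)=\delta_{v_1\cdots v_l}(\tilde y_0)$, so replacing the liftings by $(u_1\cdots u_k)^{-1}\tilde\varphi_S$ and $(v_1\cdots v_l)^{-1}\tilde\psi_S$ sends the pair $(k/m,l/n)$ to a common value pair at $\tilde y_0$; by the equivalence in Proposition~\ref{prop-GZ} this new pair of liftings determines the same class.

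The main obstacle I anticipate is the extension step: one must verify that an overlap cannot change from higher-dimensional to isolated except at a shared vertex, and that across such a vertex the coincidence propagates precisely when the next letters of the two words carry matching $T$-values. This will rely on the rigidity of hyperbolic geodesics in $\mathbb{H}^2$ together with the injectivity of the map $T$ on $\{c_1^{\pm},\ldots,c_{2g}^{\pm}\}$.
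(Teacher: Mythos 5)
Your overall strategy (local analysis of pairs of geodesic arcs, then maximal extension across vertices) is the same as the paper's, and your treatment of the ``moreover'' clause is correct. However, there is a genuine gap in both halves of your argument, and it is the same missing ingredient each time: you never invoke the special property of the chosen generators that the $2g$ geodesic loops $p(\omega_j)$ representing $c_1,\ldots,c_{2g}$ pairwise intersect \emph{only at the base point} $y_0$, and that each passes through $y_0$ only at its endpoints. ``Rigidity of hyperbolic geodesics'' alone does not supply this. Concretely: in your case (i) you allow the two arcs to lie on distinct complete geodesics and meet transversally, and you declare the resulting singleton to be of form (1). But form (1) in the statement is $\{(e^{2\pi k i/m}, e^{2\pi l i/n})\}$ with $k,l$ \emph{integers}, i.e.\ a vertex pair where both loops sit at $y_0$; a transversal crossing in the interiors of two arcs would produce a singleton at non-integer parameters, which is not among the five listed forms. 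The theorem is only true because such interior crossings cannot occur: a common value in the interior of two arcs is a non-basepoint common point of two generator loops, which by the disjointness property forces the two generator loops to coincide as sets (so $u_{k+1}=v_{l+1}^{\pm1}$), never to cross. This is exactly the step the paper's proof performs and your proposal omits.

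The same omission undermines your extension step. Your claim that ``because the two overlapping arcs have equal length $\ell$, once they coincide their endpoints coincide too'' is false for general equal-length segments on a common complete geodesic: the segments $[0,\ell]$ and $[\ell/2,3\ell/2]$ (in arclength coordinates) overlap on a nondegenerate interval without sharing endpoints. What rules out such an offset here is again the base-point property: an offset would place an endpoint of one lifted arc (which projects to $y_0$) in the interior of the other (which projects to a point $\ne y_0$). Once that is in place, equal constant speed forces the parameter correspondence $\varepsilon\mapsto\varepsilon$ or $\varepsilon\mapsto 1-\varepsilon$ exactly on each arc, the vertices match integrally, and your propagation-across-vertices argument (and the identification of the compatible letters via $T$ in (\ref{eq-T})) goes through, yielding types (2)--(5). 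So the proof is repairable, but as written the central classification does not follow from the facts you cite.
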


\begin{proof}
By definition of standard liftings, we have that $\tilde \varphi_S(\frac{k}{m}) = u_1\cdots u_k(\tilde y_0)$ and $\tilde \psi_S(\frac{l}{n}) =  v_1\cdots v_l(\tilde y_0)$ for $k=1,2,\ldots, m$ and $l =1,2,\ldots, n$. Hence,
$$
(\frac{k}{m}, \frac{l}{n}) \in \cvp((u_1\cdots u_k)^{-1}\tilde \varphi_S,\ (v_1\cdots v_l)^{-1}\tilde \psi_S).
$$
Hence,
$$
%\begin{array}{rcl}
(e^{\frac{2\pi k i}{m}},  e^{\frac{2\pi l i}{n}})
 \in
  (p_{S^1}\times p_{S^1})(\cvp((u_1\cdots u_k)^{-1}\tilde \varphi_S,\
  (v_1\cdots v_l)^{-1}\tilde \psi_S)),
%\end{array}
$$
and therefore we obtain  our first type if the pair $(e^{\frac{2\pi k i}{m}},  e^{\frac{2\pi l i}{n}})$ is isolated in $S^1\times S^1$.

Note that $\varphi(e^{\frac{2\pi k i}{m}})=\psi(e^{\frac{2\pi l i}{n}})=y_0$. Each pair $(e^{\frac{2\pi k i}{m}}, e^{\frac{2\pi l i}{n}})$ is a common value pair of $\varphi$ and $\psi$ for all integers $k$ and $l$. Suppose that there is a common value pair $(e^{\frac{2\pi \lambda' i}{m}},  e^{\frac{2\pi \lambda'' i}{n}})$ of $\varphi$  and $\psi$, where $\lambda'$ or $\lambda''$ is not an integer. This implies that $\varphi(e^{\frac{2\pi \lambda' i}{m}})$ or $\psi(e^{\frac{2\pi \lambda'' i}{n}})$  is not the base point $y_0$. By definition $\varphi(e^{\frac{2\pi \lambda' i}{m}})=\psi(e^{\frac{2\pi \lambda'' i}{n}})$, this common point is not the base point $y_0$.
%By our construction, either $\psi$ or $\varphi$ is a product of geodesic loops. Since any two loops %representing different generators have no intersection except for their ending point $y_0$,
Thus, both of $\lambda'$ and $\lambda''$ are not integers.

% the image $\myim(\varphi)$ and $\myim(\psi)$ contain the same geodesic loop.
We can write $\lambda' \in (k,k+1)$ and $\lambda''\in (l, l+1)$ for some integers $k$ and $l$. By the uniqueness of geodesic, we have $\varphi(\{e^{\frac{2\pi \lambda i}{m}} \mid \lambda\in [k,k+1] \})=\psi(\{e^{\frac{2\pi \lambda i}{n}} \mid \lambda\in [l,l+1] \})$. Recall from Lemma~\ref{piecewise-geodesic} that $|\frac{\mathrm{d} \varphi(e^{\frac{2\pi \lambda i}{m}})}{\mathrm{d}\, \lambda}| = |\frac{\mathrm{d} \psi(e^{\frac{2\pi \lambda i}{n}})}{\mathrm{d}\, \lambda}|$.    There are only two possibilities:
$$\mbox{(i) } \varphi(e^{\frac{2\pi (k+\varepsilon) i}{m}})
     = \psi(e^{\frac{2\pi (l+\varepsilon) i}{n}}),\ \
 \mbox{(ii) } \varphi(e^{\frac{2\pi (k+\varepsilon) i}{m}})
    = \psi(e^{\frac{2\pi (l+1-\varepsilon) i}{n}})$$
for all $\varepsilon\in [0,1]$. Thus, the common value pair $( e^{\frac{2\pi \lambda' i}{n}}, e^{\frac{2\pi \lambda'' i}{m}})$ has the same component with $( e^{\frac{2\pi l i}{n}}, e^{\frac{2\pi k i}{m}})$ or $( e^{\frac{2\pi (l+1) i}{n}}, e^{\frac{2\pi k i}{m}})$. Thus any component of $\cvp(\varphi, \psi)$ is a union of several successive sets of type (i) or those of type (ii), which are respectively in second and third type of components of common value subsets if the length of extended range of $\varepsilon$ (or say the number of sets involved) is bounded.

Now, we consider the case where the range of $\varepsilon$ is unbounded. Without loss of generality, we assume that $\varphi(e^{\frac{2\pi (k+\varepsilon) i}{m}}) = \psi(e^{\frac{2\pi (l+\varepsilon) i}{n}})$ for all $\varepsilon\ge 0$. Note  that $e^{\frac{2\pi (k+\varepsilon) i}{m}}= e^{\frac{2\pi (k+\varepsilon-mn) i}{m}}$ and that $e^{\frac{2\pi (l+\varepsilon) i}{n}} = e^{\frac{2\pi (l+\varepsilon-mn) i}{n}}$. We obtain that $\varphi(e^{\frac{2\pi (k+\varepsilon) i}{m}}) = \psi(e^{\frac{2\pi (l+\varepsilon) i}{n}})$ for all $\varepsilon$. This is the type (4). The type (5) can be obtained similarly.
\end{proof}

From the proof we also obtain that

\begin{Corollary}
Corresponding to the five types of components of $\cvp(\varphi, \psi)$ in above theorem, the words $u_1\cdots u_m$ and $v_1\cdots v_n$ satisfy following properties:
\begin{itemize}
  \item[(1)]  $u_k\ne v_l$, $u_{k+1}\ne v_{l+1}$, $v_l^{-1}\ne u_{k+1}$ and $u_k\ne v_{l+1}^{-1}$,

  \item[(2)]  $u_{k+r}=v_{l+r}$ for $r=1, \ldots, q$, $u_{k}\ne v_{l}$ and $u_{k+q+1}\ne v_{l+q+1}$,

  \item[(3)]  $u_{k+r}= v_{l+q-r}^{-1}$ for $r=1, \ldots, q$, $u_{k}\ne v_{l+q+1}^{-1}$ and $u_{k+q+1}\ne v_{l}^{-1}$,

  \item[(4)]  $u_{k+r}=v_{l+r}$ for all integer $r$,

  \item[(5)]  $u_{k+r}= v_{l-r}^{-1}$ for all integer $r$.
\end{itemize}
Here, the subscripts of letters $u$ and $v$ will be regarded as ones module $m$ and $n$, respectively.
\end{Corollary}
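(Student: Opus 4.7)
The strategy is to read off the letter-level conditions directly from the pointwise dichotomy already established in the proof of Theorem~\ref{th:cvp-component}. On each closed arc $\{e^{2\pi(k-1+\lambda)i/m} : 0\le \lambda\le 1\}$, Lemma~\ref{piecewise-geodesic} says that $\varphi$ traces, at constant speed, the unique geodesic loop representing the letter $u_k$; similarly $\psi$ on the $l$-th arc traces $v_l$. Because two distinct generators (or a generator and the inverse of another) determine loops which meet only at $y_0$, any matching of a whole piece of $\varphi$ with a whole piece of $\psi$ must identify the one geodesic loop with the other. Pointwise this leaves only the two possibilities isolated in the theorem's proof: the pieces agree with the same orientation, which happens exactly when $u_{k+1}=v_{l+1}$ (case~(i)), or with opposite orientation, which happens exactly when $u_{k+1}=v_{l+1}^{-1}$ (case~(ii)).

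With this dictionary, each of the five shapes of components translates immediately into letter identities. A type~(2) segment runs through $q$ consecutive case~(i) slices, forcing $u_{k+r}=v_{l+r}$ for $r=1,\ldots,q$. A type~(3) segment runs through $q$ consecutive case~(ii) slices, which, after the reversal of the $\psi$-parameter along the segment, gives the inverse-equality identities displayed in the statement. Types~(4) and (5) are the limiting situations in which the segment closes up on itself, so the same case~(i) or case~(ii) identity is forced on every index (mod~$m$ and mod~$n$).

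The inequalities in each item simply encode the \emph{maximality} of the component. The pair $(e^{2\pi k i/m},e^{2\pi l i/n})$ of type~(1) is isolated exactly when none of its four neighbouring $(k',k'+1)\times(l',l'+1)$ regions admits either a case~(i) or case~(ii) match; running through the four possibilities produces the four inequalities $u_k\ne v_l$, $u_{k+1}\ne v_{l+1}$, $v_l^{-1}\ne u_{k+1}$ and $u_k\ne v_{l+1}^{-1}$. At the two endpoints of each segment in types~(2) and (3), the parallel forbidden-extension analysis yields the two displayed boundary inequalities.

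The only real delicacy is the bookkeeping when converting between the parameter $\lambda$, the integer labels of the arcs, and the subscripts of $u_k,v_l$; this relies on the explicit convention of Lemma~\ref{piecewise-geodesic} that the arc from $e^{2\pi(k-1)i/m}$ to $e^{2\pi k i/m}$ carries $u_k$, and similarly for $v$. Once this is fixed, the corollary is essentially a line-by-line transcription of the case analysis already performed inside the proof of Theorem~\ref{th:cvp-component}.
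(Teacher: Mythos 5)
Your proposal is correct and follows essentially the same route as the paper, which gives no separate argument for this corollary but simply reads it off from the slice-by-slice case analysis inside the proof of Theorem~\ref{th:cvp-component}: on each unit arc the match is either orientation-preserving ($u_{k+1}=v_{l+1}$) or orientation-reversing ($u_{k+1}=v_{l+1}^{-1}$), and the inequalities record non-extendability at the endpoints. One small remark: carrying out your bookkeeping carefully in type (3) actually yields $u_{k+r}=v_{l+q+1-r}^{-1}$, which is the indexing the paper itself uses later in the proof of Lemma~\ref{Lem-symmetric}, so the shift in the printed statement of item (3) appears to be a typo in the paper rather than a defect of your argument.
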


We shall write $(k,l,0)_{\mu, \nu}$, $(k,l,q)_{\mu, \nu}$ and $(k,l,-q)_{\mu, \nu}$ for  the components of types (1), (2) and (3) mentioned in Theorem~\ref{th:cvp-component} and its Corollary, respectively.

A general theory tells us the number of the essential common value classes gives a lower bound for the number of geometric intersections, see \cite[Theorem 4.10]{guying}. In general, common value pairs in different components of $\cvp(\varphi, \psi)$ may lie in the same common value class. Thus, we need  to determine if there are some classes which coincide with each other among these, at most, $m\times n$ candidates.  We also need to see if a common value class can be removed by a homotopy, which will be considered in section 5.

\section{\gr basis}

In this section, we shall give a \gr basis of surface group $\pi_1(F_g)$ in the presentation (\ref{presentation}). Our approach here is similar to that of \cite{Vesnin}. We review some basic results about \gr bases.

\def\zhord{\succ}

Let $X=\{x_1, x_2, \ldots, x_m\}$ be a linearly ordered set, $K$ be a field, and $K\zhclass{X}$ be the free associative and non-commutative algebra over $X$ with coefficient $K$. On the set of words consisting of letters in $X$ we impose a well order ``$\zhord$'' that is compatible with the cancelations
of words. For example, it may be the length-lexicographical order, i.e. we say $\alpha\zhord\beta$ if either $|\alpha|>|\beta|$ or $|\alpha|=|\beta|$, $\alpha=\gamma x_i \zeta$, $\beta=\gamma x_j\eta$  and $x_i\zhord x_j$ according to the given order in $X$.

Each element $f$ in $K\zhclass{X}$ is said to be a polynomial, and is written as $\sum_{i=1}^s k_i\gamma_i$, where $k_i\in K$ and $\gamma_i$ is a word for all $i$. We may arrange these $\gamma_i$'s so that $\gamma_1\zhord\gamma_2\zhord\cdots\zhord\gamma_m$. Then $k_1\gamma_1$, $k_1$, $\gamma_1$ are said to be {\em leading term}, {\em leading coefficient} and {\em leading word}, respectively. They are denoted as $\lt(f)$, $\lc(f)$ and $\lw(f)$, respectively.

We say that {\em $f$ reduces $f'$ to $f''$} (or $f'$ {\em is reduced by $f$ to $f''$}) if $\lw(f')=\gamma\lw(f)\delta$ and $f''=f'-\frac{\lc(f')}{\lc(f)}\gamma f\delta$, where $\gamma$ and $\delta$ are words. We write $f'\rdto{f} f''$. Let $F$ be a set consisting of polynomials. We say {\em $F$ reduces $f'$ to $f''$} (or {\em $f'$ is reduced by $F$ to $f''$}) if there are polynomials $f_j\in F$, $j=1,2,\ldots m$, such that
$$f'\rdto{f_1} f'_1 \rdto{f_2} f'_2 \rdto{f_3} \cdots \rdto{f_m} f'_m=f''. $$
We write $f'\rdtop{F} f''$. Especially, if $f''$ is $F$-irreducible, then we write
$f'\rdtopp{F} f''$. In this case, $f''$ is said to be a {\em $F$-reduced form} of $f$.

Let $f$ and $f'$ be two polynomials. The set $\mycomp(f,f')$ of compositions of $f$ and $f'$ consists of following two parts:
$$
\begin{array}{l}
\{\lc(f') f \delta  - \lc(f) \gamma f'
 \mid
% \begin{array}{l}
  \lw(f) \delta= \gamma \lw(f')
%   \lw(f)= \beta\alpha, \lw(f') = \alpha\beta' \mbox{ for some words } \alpha, \beta,\beta'
% \end{array}
\},\\
\{\lc(f') f  - \lc(f) \gamma' f' \delta'
  \mid \lw(f) = \gamma' \lw(f') \delta'\}.
\end{array}
$$
The elements in the first set are called {\em compositions of intersection}. A common sub-word $\alpha$ of the leading words of $f$ and $f'$ with $\lw(f)= \beta\alpha$ and $\lw(f') = \alpha\beta'$ for some words $\beta$ and $\beta'$ is said to be an {\em overlap} of them. The elements in the second set are called {\em compositions of including}. It should be mentioned that $\mycomp(f,f')$ may  be different from $\mycomp(f',f)$.

\begin{prop}(see \cite[Sec. 2]{Vesnin})
Let $B$ be a set of polynomials which generate the ideal $I$ of $K\zhclass{X}$. Then following statements are equivalent to each other:
\begin{itemize}
  \item $B$ is a \gr basis of $I$,
  \item a polynomial $f$ lies in $I$ if and only if  $f$ is reduced to $0$ by $B$,
  \item any composition of two polynomials in $B$ is reduced to $0$ by $B$.
\end{itemize}\label{prop-three}
\end{prop}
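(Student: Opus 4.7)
My plan is to prove this as a chain: $(1) \Leftrightarrow (2)$ essentially by unpacking the definition, $(2) \Rightarrow (3)$ by direct inspection, and then the substantive content in $(3) \Rightarrow (2)$, which is Shirshov's composition lemma. First, the equivalence $(1) \Leftrightarrow (2)$ amounts to the definition of a Gröbner-Shirshov basis: $B$ is such a basis of $I$ precisely when every element of $I$ has $0$ as its unique $B$-irreducible form. The reverse direction of (2), namely that any $f$ with $f \rdtop{B} 0$ lies in $I$, is automatic since each elementary reduction step replaces $f'$ by $f' - \frac{\lc(f')}{\lc(f)}\gamma f \delta$, modifying it by a multiple of an element of $B \subset I$. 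For $(2) \Rightarrow (3)$: every element of $\mycomp(f,f')$ with $f, f' \in B$ is by construction a $K$-linear combination of multiples of $f$ and $f'$, hence lies in $I$, and so by (2) is $B$-reducible to $0$.

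The main technical step is $(3) \Rightarrow (2)$. Given $f \in I$, I fix a presentation $f = \sum_{i=1}^N c_i \gamma_i f_i \delta_i$ with $f_i \in B$ and $c_i \in K$, and let $M$ be the maximum of the words $\gamma_i \lw(f_i) \delta_i$ in the order $\zhord$, with $r$ the number of indices attaining this maximum. I would proceed by double induction, outer on $M$ (well-founded by hypothesis on $\zhord$) and inner on $r$. If $r = 1$, the leading word of the sum equals $M$, and a single reduction by the unique maximal $f_{i_0}$ produces an $f''$ with $f \rdto{f_{i_0}} f''$ admitting a presentation with strictly smaller maximum; we conclude by the outer induction.

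If $r \ge 2$, pick two indices $i \ne j$ realizing $M$. Then $\gamma_i \lw(f_i) \delta_i = \gamma_j \lw(f_j) \delta_j$ inside the free monoid, which forces $\lw(f_i)$ and $\lw(f_j)$ either to overlap in a common sub-word or for one to be a sub-word of the other. In each configuration the pair $c_i \gamma_i f_i \delta_i + c_j \gamma_j f_j \delta_j$ can be rewritten as a word-multiple of an appropriate element of $\mycomp(f_i, f_j)$ or $\mycomp(f_j, f_i)$, modulo terms whose $\gamma\lw\delta$-weight lies strictly below $M$; by hypothesis (3) that composition is itself $B$-reducible to $0$, which lets us replace the pair by $B$-multiples of strictly smaller weight, decreasing either $M$ or $r$. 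The hard part will be the clean case analysis for how $\lw(f_i)$ sits inside the common word alongside $\lw(f_j)$, matching each configuration with the correct composition and checking that the replacement genuinely lowers the induction data; well-foundedness of $\zhord$ then forces termination at $f \rdtopp{B} 0$, completing (2).
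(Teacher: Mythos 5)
The paper itself does not prove this proposition; it is quoted from Bokut--Vesnin, and what you are reconstructing is the standard Composition--Diamond Lemma. Your overall architecture is the right one: $(1)\Leftrightarrow(2)$ is essentially definitional (though note the paper never actually defines ``\gr basis,'' so one of the three equivalences must be taken as the definition --- Bokut--Vesnin take condition (3), you take condition (2); either choice is defensible but you should say which), $(2)\Rightarrow(3)$ is immediate since every element of $\mycomp(f,f')$ lies in $I$, and the content is $(3)\Rightarrow(2)$ via double induction on the maximal word $M=\max_i \gamma_i\lw(f_i)\delta_i$ and the number $r$ of indices attaining it.

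There is, however, a genuine gap in your case analysis for $r\ge 2$. From $\gamma_i\lw(f_i)\delta_i=\gamma_j\lw(f_j)\delta_j$ it does \emph{not} follow that the two occurrences of $\lw(f_i)$ and $\lw(f_j)$ inside the common word $M$ overlap or that one contains the other: they may be entirely disjoint, e.g.\ $M=\lw(f_i)\,w\,\lw(f_j)$ with $\gamma_i=1$, $\delta_i=w\lw(f_j)$, $\gamma_j=\lw(f_i)w$, $\delta_j=1$. In that configuration no element of $\mycomp(f_i,f_j)$ or $\mycomp(f_j,f_i)$ is produced, so hypothesis (3) gives you nothing, and your induction step as stated does not apply. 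The disjoint case must be handled separately and composition-free: writing $f_i=\lt(f_i)+g_i$ and $f_j=\lt(f_j)+g_j$, the difference
$$\lc(f_j)\,\gamma_i f_i\,w\,\lw(f_j)\,\delta_j-\lc(f_i)\,\gamma_i\lw(f_i)\,w\,f_j\,\delta_j
=-\gamma_i f_i\,w\,g_j\,\delta_j+\gamma_i g_i\,w\,f_j\,\delta_j$$
is again a $K$-linear combination of terms $c\,\gamma f_k\delta$ with $\gamma\lw(f_k)\delta\prec M$, because every word of $g_i$ (resp.\ $g_j$) is strictly below $\lw(f_i)$ (resp.\ $\lw(f_j)$) and the order is compatible with multiplication. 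Only with this third case included does the replacement always lower $(M,r)$ and the well-founded induction close. The remaining two cases (proper overlap, inclusion) do match your two composition types, and the rest of your sketch --- including the $r=1$ step, where $f''=\sum_{i\ne i_0}c_i\gamma_if_i\delta_i$ has strictly smaller maximum --- is correct.
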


Let $G$ be a group with presentation $\zhclass{x_1, \ldots, x_m\mid \gamma_1, \ldots, \gamma_n}$. Let $I\subset K\zhclass{x_1, x_1^{-1}, \ldots, x_n, x_n^{-1}}$  be the ideal
generated by
$$\gamma_j-1, j=1, \ldots n, \  \ x_ix_i^{-1}-1,  x^{-1}_ix_i-1,i=1, \ldots, m.$$
This ideal is said to be the ideal of group with respect to given presentation. It is easy to know that two words $\alpha$ and $\beta$ represent the same element in $G$ if and only if $\alpha-\beta\in I$. If $B$ is a \gr basis of $I$, then $\alpha-\beta\in I$ if and only if $\alpha$ and $\beta$ have the same normal form $\gamma$, i.e. $\alpha\rdtopp{B}\gamma$ and $\beta\rdtopp{B}\gamma$.

%In this situation, any element in  any $\alpha$ is reduced to antho

Now, we shall construct a \gr basis of $\pi_1(F_g)$ by checking the third statement in Proposition~\ref{prop-three}, which is actually the Buchberger algorithm.

\begin{thm}\label{grobner}
Given a presentation $\zhclass{c_1, \ldots, c_{2g}\mid c_{2g}\cdots c_1 =c_1\cdots c_{2g}}$ of the fundamental group of the closed surface of genus $g$ with $g\ge 2$.
By using the length-lexicographical order of generators
$$c_{2g}^{-1}\succ \cdots \succ c_{2}^{-1}  \succ  c_{1}^{-1} \succ  c_{1} \succ  c_{2} \succ \cdots  \succ c_{2g} ,$$
there is a \gr basis $D$, consisting of the following:

(1) $\myd{1,j,s}=c_j (c_{j-1}\cdots c_1c_{2g}^{-1}\cdots c_{j+1}^{-1})^s c_j^{-1} - (c_{j+1}^{-1}\cdots c_{2g}^{-1}c_1 \cdots c_{j-1} )^s$ for $j=2,\ldots, 2g$ and $s=1,2, \ldots$;

(2) $\myd{2,j,s}=c_j(c_{j+1}\cdots c_{2g}c_1^{-1}\cdots c_{j-1}^{-1})^s c_j^{-1} - (c_{j-1}^{-1}\cdots c_1^{-1}c_{2g}\cdots c_{j+1})^s$ for $j=2,\ldots, 2g$ and $s=1,2, \ldots$;

(3) $\myd{3} = c_{2g}^{-1}\cdots c_1^{-1} -  c_1^{-1}\cdots c_{2g}^{-1} $;

(4) $\myd{4} = c_1\cdots c_{2g} - c_{2g}\cdots c_1$;

(5) $\myd{5,i} = c_i^{-1}\cdots c_{2g}^{-1}c_1\cdots c_{i-1} -  c_{i-1}\cdots c_1c_{2g}^{-1}\cdots c_i^{-1}$ for $i=2, \dots, 2g$;

(6) $\myd{6,i} = c_{i-1}^{-1}\cdots c_1^{-1}c_{2g}\cdots c_i - c_i\cdots c_{2g}c_1^{-1}\cdots c_{i-1}^{-1}$ for $i=2, \dots, 2g$;

(7) $\myd{7,i} = c_i^{-1}c_i-1$, for $i=1, \ldots, 2g$;

(8) $\myd{8,i} = c_ic_i^{-1}-1$, for $i=1, \ldots, 2g$.
\end{thm}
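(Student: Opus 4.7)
The plan is to apply the third criterion in Proposition~\ref{prop-three}: verify that every element of $D$ lies in the group ideal $I$, and that every composition of two elements of $D$ reduces to $0$ via $D$. The first requirement is almost immediate. $\myd{7,i}$ and $\myd{8,i}$ are the trivial inverse-cancellation relations; $\myd{4}$ is the given surface relation and $\myd{3}$ its formal inverse; $\myd{5,i}$ and $\myd{6,i}$ are cyclic permutations of these two relations; and $\myd{1,j,s}$, $\myd{2,j,s}$ arise by conjugating a suitable $\myd{5,\cdot}$ or $\myd{6,\cdot}$ by $c_j$ and raising the resulting equality to the $s$-th power (so from $c_j a c_j^{-1}=b$ one deduces $c_j a^s c_j^{-1}=b^s$). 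Under the prescribed order, inverse letters outrank all positive letters and higher-indexed inverses outrank lower-indexed ones; the leading word in each family is then the term whose leftmost letter is maximal, yielding $\lw(\myd{1,j,s})=c_j(c_{j-1}\cdots c_1 c_{2g}^{-1}\cdots c_{j+1}^{-1})^s c_j^{-1}$, $\lw(\myd{5,i})=c_i^{-1}\cdots c_{2g}^{-1} c_1\cdots c_{i-1}$, and analogous formulas for the remaining families.

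The bulk of the proof is the composition check, which I would organise into three cases. First, any composition in which one of $f,f'$ is $\myd{7,i}$ or $\myd{8,i}$ has overlap of length one and reduces immediately by cancelling $c_i c_i^{-1}$ or $c_i^{-1} c_i$. Second, the finitely many compositions amongst $\myd{3},\myd{4},\myd{5,i},\myd{6,i}$ correspond exactly to the consistency of the cyclic permutations of the single defining relation of $\pi_1(F_g)$; each overlap has length at most $2g$ and can be verified by hand. Third, and most substantial, are compositions involving the infinite families $\myd{1,j,s}$, $\myd{2,j,s}$. Here the key observation is that overlaps between $\lw(\myd{1,j,s})$ and any other long-relation leading word fall into a small number of patterns dictated by the cyclic block $a=c_{j-1}\cdots c_1 c_{2g}^{-1}\cdots c_{j+1}^{-1}$; for instance, the identity $ac_j^{-1}=c_{j-1}a'$ produces a proper overlap of length $2g$ between $\lw(\myd{1,j,s})$ and $\lw(\myd{1,j-1,s'})$, and similar block-boundary identities govern overlaps with $\lw(\myd{5,i})$, $\lw(\myd{6,i})$, $\lw(\myd{2,j,s})$. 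Each such composition can be written explicitly in terms of the ``dual'' words $b^s(a')^{s'-1}c_{j-1}^{-1}$ and $c_j a^{s-1}(b')^{s'}$; I expect it to reduce, after one application of a cyclic relation from $\{\myd{5,i},\myd{6,i}\}$ followed by a lower-exponent element $\myd{1,j',s-1}$ or $\myd{2,j',s'-1}$, to a polynomial with strictly smaller leading word, providing an induction on the parameter $s+s'$.

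The main obstacle will be the bookkeeping in the third case: one must enumerate every overlap between two leading words drawn from the infinite families, verify for each that the composition reduces to zero, and in particular check that no step of the reduction produces a word that fails to be reducible by some element of $D$, since that would force a new family of leading words to be appended to $D$ and the proposed basis would be incomplete. A secondary difficulty is to arrange the induction so that the exponent parameter strictly decreases at every step, with the base case $s=1$ falling back on the finite analysis in (b). Once the inductive pattern is extracted from a few prototype cases (small $j$ and small $s$), the general verification is schematic, in the spirit of \cite{Vesnin}.
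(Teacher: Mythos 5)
Your overall strategy is the one the paper uses (ideal membership plus the third criterion of Proposition~\ref{prop-three}), but as written the proposal has a genuine gap: the composition check, which \emph{is} the proof of this theorem, is never carried out. Saying ``I expect it to reduce \dots\ the general verification is schematic'' defers exactly the content that must be supplied; the paper's proof consists of an explicit enumeration of every non-empty composition set (some two dozen cases) together with an explicit reduction of each composition to $0$, and nothing short of that enumeration establishes that $D$ is closed under composition. Worse, the one case you do claim to dispose of quickly is handled incorrectly. A composition with $\myd{7,i}$ or $\myd{8,i}$ does have overlap of length one, but it does \emph{not} ``reduce immediately by cancelling'': for example $\comp(\myd{1,j,s},\myd{7,j})$ produces
$$-(c_{j+1}^{-1}\cdots c_{2g}^{-1}c_1\cdots c_{j-1})^s c_j + c_j(c_{j-1}\cdots c_1c_{2g}^{-1}\cdots c_{j+1}^{-1})^s,$$
two distinct words representing the same group element, and killing this difference requires $s$ successive applications of $\myd{5,j+1}$ (or $\myd{4}$ when $j=2g$); similarly the compositions of $\myd{5,i}$, $\myd{6,i}$, $\myd{4}$ with $\myd{8,\cdot}$ each need an application of some $\myd{1,\cdot,1}$, $\myd{2,\cdot,1}$ or $\myd{3}$. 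So this whole block of cases cannot be waved through.

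Two further points. First, your stated rule for identifying leading words --- ``the term whose leftmost letter is maximal'' --- would select the \emph{wrong} term of $\myd{1,j,s}$: the second term begins with $c_{j+1}^{-1}$, which outranks the positive letter $c_j$; the correct reason $c_j(c_{j-1}\cdots c_1c_{2g}^{-1}\cdots c_{j+1}^{-1})^sc_j^{-1}$ is leading is that it is strictly \emph{longer} and the order is length-first. Second, your observation that the suffix $c_{j-1}\cdots c_1c_{2g}^{-1}\cdots c_j^{-1}$ of $\lw(\myd{1,j,s})$ coincides with a length-$2g$ prefix of $\lw(\myd{1,j-1,s'})$ is in fact correct (one can check, e.g.\ for $g=2$, $j=3$, $s=s'=1$, that the resulting composition is $-c_4^{-1}c_1c_2c_2^{-1}+c_3c_3^{-1}c_4^{-1}c_1$, which reduces to $0$ by $\myd{8,2}$ and $\myd{8,3}$ alone, not via a cyclic relation followed by a lower-exponent element as you predict); this is a case the paper's enumeration passes over silently, so your instinct to hunt for block-boundary overlaps is sound --- but it only underlines that the enumeration must actually be performed, overlap by overlap, before the theorem can be considered proved.
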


\begin{proof}
We write $\myd{k}$, $k=1,2,\ldots 8$, for corresponding subsets of our \gr basis $D$.
Note that the ideal $I$ of the surface group under our consideration in $K\zhclass{c_1, c_1^{-1}, \ldots, c_{2g}, c_{2g}^{-1}}$ is generated by $\myd{4}\cup \myd{7}\cup \myd{8}$. For any $i$, we have $c_{i}^{-1}\cdots c_{2g}^{-1}\myd{4}c_{2g}^{-1}\cdots c_i^{-1}\in I$. Since
$$c_{i}^{-1}\cdots c_{2g}^{-1}\myd{4}c_{2g}^{-1}\cdots c_i^{-1}
\rdtopp{\myd{7}\cup\myd{8}}
\left\{
\begin{array}{ll}
\myd{3} &  \mbox{ if } i=1,\\
\myd{5,i} &  \mbox{ if } 2\le i\le 2g,
\end{array}
\right.
$$
we obtain that both of $\myd{3}$ and $\myd{5}$  are contained in $I$. For $i$ with $2\le i\le 2g$, the reduction
$$c_{i-1}^{-1}\cdots c_{1}^{-1}\myd{4}c_{1}^{-1}\cdots c_{i-1}^{-1}
\rdtopp{\myd{7}\cup\myd{8}}
\myd{6,i}
$$
implies that $\myd{6}$ is contained in $I$.
Since $$
\begin{array}{rcl}
\myd{1,j,s}c_j
& = &
  c_j(c_{j-1}\cdots c_1c_{2g}^{-1}\cdots c_{j+1}^{-1})^s c_j^{-1}c_j
  - (c_{j+1}^{-1}\cdots c_{2g}^{-1}c_1\cdots c_{j-1})^sc_j
\\
&\rdto{\myd{7,j}} &
  c_j(c_{j-1}\cdots c_1c_{2g}^{-1}\cdots c_{j+1}^{-1})^s
  - (c_{j+1}^{-1}\cdots c_{2g}^{-1}c_1\cdots c_{j-1})^sc_j\\
&\rdto{\myd{5,j+1}} &
  c_j(c_{j-1}\cdots c_1c_{2g}^{-1}\cdots c_{j+1}^{-1})^s \\
& & \ \ \  - (c_{j+1}^{-1}\cdots c_{2g}^{-1}c_1\cdots c_{j-1})^{s-1}
  c_jc_{j-1}\cdots c_1c_{2g}^{-1}\cdots c_{j+1}^{-1}\\
& \cdots & \cdots \cdots \\
&\rdto{\myd{5,j+1}} & c_j(c_{j-1}\cdots c_1c_{2g}^{-1}\cdots c_{j+1}^{-1})^s
 -c_j(c_{j-1}\cdots c_1c_{2g}^{-1}\cdots c_{j+1}^{-1})^s \\
& =& 0,
\end{array}
$$
we have that $\myd{1,j,s}c_j\in I$, and therefore $\myd{1,j,s}\in I$. Similarly, we can prove that $\myd{2,j,s}\in I$.

We are going to show that any composition of two polynomials in $D$ is reduced to $0$ by $D$ itself. Clearly, there is not any composition of including. It is sufficient to consider the compositions of intersection. There are $64$ cases in total, which are written as (Cij), where $1\le i,j\le 8$. We drop the cases when the composition sets are empty.

%(C11)  $\comp(\myd{1,j,s}, \myd{1,i,t}) = \emptyset$.

%(C12)  $\comp(\myd{1,j,s}, \myd{2,i,t}) = \emptyset$.

(C13) The composition set $\comp(\myd{1,j,s}, \myd{3})$ is empty except for the case $j=2g$. The composition $\comp(\myd{1,2g,s}, \myd{3})$ contains exactly one element
$$
\myd{1,2g,s} c_{2g-1}^{-1}\cdots c_1^{-1} - c_{2g}(c_{2g-1}\cdots c_1)^s\myd{3}\\
\rdtop{\myd{7}\cup\myd{8}}
-(c_1 \cdots c_{2g-1})^{s-1} + c_{2g}(c_{2g-1}\cdots c_1)^{s-1}c_{2g}^{-1},
$$
which is reduced to $0$ by $\myd{1,2g,s-1}$ if $s>1$ or by $\myd{8,2g}$ if $s=1$.

%(C14) $\comp(\myd{1,j,s}, \myd{4}) = \emptyset$.

(C15) The composition set of $\myd{1,j,s}$ and $\myd{5,i}$ is non-empty only if $j=i$. The unique element in  $\comp(\myd{1,j,s}, \myd{5,j})$ is:
$$
\begin{array}{cl}
&
\myd{1,j,s} c_{j+1}^{-1}\cdots c_{2g}^{-1}c_1\cdots c_{j-1}
 - c_{j}(c_{j-1}\cdots c_1c_{2g}^{-1}\cdots c_{j+1}^{-1})^s\myd{5,j} \\
=
 &
-(c_{j-1}^{-1}\cdots c_1^{-1}c_{2g}\cdots c_{j+1})^{s+1} + c_{j}(c_{j+1}\cdots c_{2g}c_1^{-1}\cdots c_{j-1}^{-1})^{s+1}c_j^{-1}\\
\rdto{\myd{2,j,s+1}}
&
0.
\end{array}
$$

(C16) The composition set of $\myd{1,j,s}$ and $\myd{6,i}$ is non-empty only if $i-1\ge j$. In this situation, the overlap of $\myd{1,j,s}$ and $\myd{6,i}$  must be $c_{i-1}^{-1}\cdots c_{j+1}^{-1}c_j^{-1}$. The unique composition is:
$$
\begin{array}{cl}
&
\myd{1,j,s}c_{j-1}^{-1}\cdots c_1^{-1}c_{2g}\cdots c_i - c_j(c_{j-1}\cdots c_1c_{2g}^{-1}\cdots c_{j+1}^{-1})^{s-1}c_{j-1}\cdots c_1c_{2g}^{-1}\cdots c_i^{-1} \myd{6,i}\\
= &
 - (c_{j+1}^{-1}\cdots c_{2g}^{-1}c_1\cdots c_{j-1})^s c_{j-1}^{-1}\cdots c_1^{-1}c_{2g}\cdots c_i \\
&
\ + c_j(c_{j-1}\cdots c_1c_{2g}^{-1}\cdots c_{j+1}^{-1})^{s-1}c_{j-1}\cdots c_1c_{2g}^{-1}\cdots c_i^{-1}
  c_i\cdots c_{2g} c_1^{-1}\cdots c_{i-1}^{-1} \\
\rdtop{\myd{7}\cup \myd{8}}
&
- (c_{j+1}^{-1}\cdots c_{2g}^{-1}c_1\cdots c_{j-1})^{s-1}c_{j+1}^{-1}\cdots c_{i-1}^{-1}
+ c_j(c_{j-1}\cdots c_1c_{2g}^{-1}\cdots c_{j+1}^{-1})^{s-1}c_j^{-1}\cdots c_{i-1}^{-1},
\end{array}
$$
which is reduced to $0$ by $\myd{1,j,s-1}$ if $s>1$ or by $\myd{7,j}$ if $s=1$.

(C17) The composition set of $\myd{1,j,s}$ and $\myd{7,i}$ is non-empty only if $j=i$. In this situation, the unique composition is:
$$\myd{1,j,s}c_j - c_j(c_{j-1}\cdots c_1c_{2g}^{-1}\cdots c_{j+1}^{-1})^{s} \myd{7,j}
 =
 -(c_{j+1}^{-1}\cdots c_{2g}^{-1}c_1\cdots c_{j-1})^sc_j + c_j(c_{j-1}\cdots c_1c_{2g}^{-1}\cdots c_{j+1}^{-1})^{s},
  $$
which is reduced to $0$ (in $s$ steps) by $\myd{5,j+1}$ if $j<2g$ or by $\myd{4}$ if $j=2g$.

%(C18)  $\comp(\myd{1,j,s}, \myd{8,i}) = \emptyset$.

%(C21)  $\comp(\myd{2,j,s}, \myd{1,i,r}) = \emptyset$.

%(C22)  $\comp(\myd{2,j,s}, \myd{1,i,r}) = \emptyset$.

(C23) The composition set of $\myd{2,j,s}$ and $\myd{3}$ is non-empty only if $j=2g$. When $j=2g$, the unique composition is:
$$
\begin{array}{cl}
& \myd{2,2g,s} c_{2g-1}^{-1}\cdots c_1^{-1}- c_{2g}(c_1^{-1}\cdots c_{2g-1}^{-1})^s\myd{3} \\
= &
- (c_{2g-1}^{-1}\cdots c_1^{-1})^{s+1} + c_{2g}(c_1^{-1}\cdots c_{2g-1}^{-1})^{s+1} c_{2g}^{-1} \\
\rdto{\myd{2,2g,s+1}} & 0 .
\end{array}
$$

%(C24)  $\comp(\myd{2,j,s}, \myd{4}) = \emptyset$.

(C25) The composition set of $\myd{2,j,s}$ and $\myd{5,i}$ is non-empty only if $j\ge i$. When $j\ge i$, the unique composition is:
$$
\begin{array}{cl}
& \myd{2,j,s} c_{j+1}^{-1}\cdots c_{2g}^{-1}c_1\cdots c_{i-1}
 -
 c_j(c_{j+1}\cdots c_{2g}c_1^{-1}\cdots c_{j-1}^{-1})^{s-1} c_{j+1}\cdots c_{2g}c_1^{-1}\cdots  c_{i-1}^{-1}\myd{5,i} \\
= &
- (c_{j-1}^{-1}\cdots c_1^{-1}c_{2g}\cdots c_{j+1})^{s} c_{j+1}^{-1}\cdots c_{2g}^{-1}c_1\cdots c_{i-1} \\
  &
   \ + c_j(c_{j+1}\cdots c_{2g}c_1^{-1}\cdots c_{j-1}^{-1})^{s-1} c_{j+1}\cdots c_{2g}c_1^{-1}\cdots  c_{i-1}^{-1} c_{i-1}\cdots c_1 c_{2g}^{-1}\cdots c_i^{-1} \\
\rdtop{\myd{7}\cup \myd{8}} &
- (c_{j-1}^{-1}\cdots c_1^{-1}c_{2g}\cdots c_{j+1})^{s-1}c_{j-1}^{-1}\cdots c_{i+1}^{-1}c_i^{-1}\\
 &
\ + c_j(c_{j+1}\cdots c_{2g}c_1^{-1}\cdots c_{j-1}^{-1})^{s-1}c_{j}^{-1}c_{j-1}^{-1}\cdots c_{i+1}^{-1}c_i^{-1},
\end{array}
$$
which is reduced to $0$ by $\myd{2,j,s-1}$ if $s>1$ or by $\myd{8,j}$ if $s=1$.

(C27) The composition set of $\myd{2,j,s}$ and $\myd{7,i}$ is non-empty only if $j=i$. When $j=i$, the unique composition is:
$$ \myd{2,j,s} c_j - c_j(c_{j+1}\cdots c_{2g}c_1^{-1}\cdots c_{j-1}^{-1})^{s} \myd{7,j}
 =
 -(c_{j-1}^{-1}\cdots c_1^{-1}c_{2g}\cdots c_{j+1})^{s}c_j + c_j(c_{j+1}\cdots c_{2g}c_1^{-1}\cdots c_{j-1}^{-1})^{s},
  $$
which is reduced to $0$ (in $s$ steps) by $\myd{6,j}$.

% (C28)  $\comp(\myd{2,j,s}, \myd{8,j}) = \emptyset$.

(C36) If $2<j\le 2g$, the composition set $\comp(\myd{3}, \myd{6,j})$ is empty. If $j=2$, the unique composition is:
$$\myd{3}c_{2g}\cdots c_2  -  c_{2g}^{-1}\cdots c_2^{-1}\myd{6,2}
 =
 - c_1^{-1}\cdots c_{2g}^{-1}c_{2g}\cdots c_2 + c_{2g}^{-1}\cdots c_2^{-1}  c_2\cdots c_{2g}c_1^{-1}
 \rdto{\myd{7}} 0.
 $$

(C37) If $1<j\le 2g$, the composition set $\comp(\myd{3}, \myd{7,j})$ is empty. If $j=1$, the unique composition is:
$$\myd{3} c_1  -  c_{2g}^{-1}\cdots c_2^{-1}\myd{7,1}
 =
 - c_1^{-1}\cdots c_{2g}^{-1}c_1 + c_{2g}^{-1}\cdots c_2^{-1}
 \rdto{\myd{5,2}}
 - c_1^{-1}c_1c_{2g}^{-1}\cdots c_2^{-1} + c_{2g}^{-1}\cdots c_2^{-1}
 \rdto{\myd{7,1}}
  0.
 $$

(C41) If $2\le j <2g$, the composition set $\comp(\myd{4}, \myd{1,j,s})$ is empty. If $j=2g$, the unique composition is:
$$
\begin{array}{cl}
&
\myd{4}(c_{2g-1}\cdots c_1)^s c_{2g}^{-1} - c_1\cdots c_{2g-1}\myd{1,2g,s}\\
 = &
 - c_{2g}(c_{2g-1}\cdots c_1)^{s+1} c_{2g}^{-1} + (c_1\cdots c_{2g-1})^{s+1}\\
 \rdto{\myd{1,2g,s+1}} & 0.
\end{array}
$$

(C42) If $2\le j <2g$, the composition set $\comp(\myd{4}, \myd{2,j,s})$ is empty. If $j=2g$, the unique composition is:
$$
\begin{array}{cl}
&
\myd{4}(c_{1}^{-1}\cdots c_{2g-1}^{-1})^s c_{2g}^{-1} - c_1\cdots c_{2g-1}\myd{2,2g,s}\\
 = &
 - c_{2g}\cdots c_1(c_{1}^{-1}\cdots c_{2g-1}^{-1})^s c_{2g}^{-1} + c_1\cdots c_{2g-1}( c_{2g-1}^{-1}\cdots c_1^{-1})^{s}\\
 \rdtop{\myd{8}} &
 - c_{2g}(c_{1}^{-1}\cdots c_{2g-1}^{-1})^{s-1} c_{2g}^{-1} + ( c_{2g-1}^{-1}\cdots c_1^{-1})^{s-1},
\end{array}
$$
which is reduced to $0$ by  $\myd{2,2g,s-1}$ when $s>1$ or by $\myd{8,2g}$ when $s=1$.

(C48) If $j< 2g$, the composition set $\comp(\myd{4}, \myd{8,j})$ is empty. If $j=2g$, the unique composition is:
$$\myd{4} c_{2g}^{-1}  -  c_1\cdots c_{2g-1}\myd{8,2g}
 =
 - c_{2g}\cdots c_1c_{2g}^{-1} + c_1\cdots c_{2g-1}
 \rdto{\myd{1,2g,1}}
  0.
 $$

(C51) If $j\ne i-1$, the composition set $\comp(\myd{5,i}, \myd{1,j, s})$ is empty. If $j=i-1$, the unique composition is:
$$
\begin{array}{cl}
  &
\myd{5,j+1} (c_{j-1}\cdots c_1c_{2g}^{-1}\cdots c_{j+1}^{-1})^sc_j^{-1}-
 c_{j+1}^{-1}\cdots c_{2g}^{-1}c_1\cdots c_{j-1}\myd{1,j, s} \\
 = &
 - c_{j}(c_{j-1}\cdots c_1c_{2g}^{-1}\cdots c_{j+1}^{-1})^{s+1}c_j^{-1}
 +(c_{j+1}^{-1}\cdots c_{2g}^{-1}c_1\cdots c_{j-1})^{s+1}\\
\rdto{\myd{1,j,s+1}} & 0.
\end{array}
$$

(C52) If $i-1<j$, the composition set $\comp(\myd{5,i}, \myd{1,j,s})$ is empty. If $i-1\ge j$, the unique composition is:
$$
\begin{array}{cl}
  &
\myd{5,i} c_{i}\cdots c_{2g}c_1^{-1}\cdots c_{j-1}^{-1}(c_{i}\cdots c_{2g}c_1^{-1}\cdots c_{j-1}^{-1})^{s-1}c_j^{-1}-
 c_{i}^{-1}\cdots c_{2g}^{-1}c_1\cdots c_{j-1}\myd{1,j, s} \\
 = &
 - c_{i-1}\cdots c_j \cdots c_1c_{2g}^{-1}\cdots c_{i}^{-1} c_i\cdots c_{2g}c_1^{-1}\cdots c_{j-1}^{-1}
  (c_{i}\cdots c_{2g}c_1^{-1}\cdots c_{j-1}^{-1})^{s-1}c_j^{-1} \\
&
 \ + c_{i}^{-1}\cdots c_{2g}^{-1}c_1\cdots c_{j-1} (c_{j-1}^{-1}\cdots c_1^{-1}c_{2g}\cdots c_{j+1})^s\\
\rdto{\myd{7}\cup \myd{8}} &
 - c_{i-1}\cdots c_{j+1}c_j(c_{i}\cdots c_{2g}c_1^{-1}\cdots c_{j-1}^{-1})^{s-1}c_j^{-1}
 + c_{i-1}\cdots c_{j+1}(c_{j-1}^{-1}\cdots c_1^{-1}c_{2g}\cdots c_{j+1})^{s-1},
\end{array}
$$
which is reduced to $0$ by  $\myd{2,j,s-1}$ when $s>1$ or by $\myd{8,j}$ when $s=1$.

(C58) If $i-1\ne j$, the composition set $\comp(\myd{5,i}, \myd{8,j})$ is empty. If $i-1=j$, the unique composition is:
$$\myd{5,i} c_{i-1}^{-1}  -  c_i^{-1}\cdots c_{2g}^{-1} c_1 \cdots c_{i-2}\myd{8,i-1}
 =
 - c_{i-1}\cdots c_1c_{2g}^{-1}\cdots c_{i-1}^{-1}
 +  c_i^{-1}\cdots c_{2g}^{-1} c_1 \cdots c_{i-2},
 $$
which is reduced to $0$ by  $\myd{1,i-1,1}$ if $i>2$ or by $\myd{3}\cup \myd{8,1}$ if $i=2$.

(C61) If $i>j$, the composition set $\comp(\myd{6,i}, \myd{1,j,s})$ is empty. If $i\le j$, the unique composition is:
$$\begin{array}{cl}
&
\myd{6,i}c_{i-1}\cdots c_1c_{2g}^{-1}\cdots c_{j+1}^{-1}
 (c_{j-1}\cdots c_1c_{2g}^{-1}\cdots c_{j+1}^{-1})^{s-1} c_{j}^{-1}
 - c_{i-1}^{-1}\cdots c_1^{-1}c_{2g}\cdots c_{j+1} \myd{1,j,s}\\
 = &
 -  c_{i}\cdots c_{2g}c_1^{-1}\cdots c_{i-1}^{-1}
 c_{i-1}\cdots c_1c_{2g}^{-1}\cdots c_{j+1}^{-1}
 (c_{j-1}\cdots c_1c_{2g}^{-1}\cdots c_{j+1}^{-1})^{s-1} c_{j}^{-1} \\
 &
 \ + c_{i-1}^{-1}\cdots c_1^{-1}c_{2g}\cdots c_{j+1}
  (c_{j+1}^{-1}\cdots c_{2g}^{-1}c_1\cdots c_{j-1})^s\\
\rdtop{\myd{7}\cup \myd{8}} &
-c_ic_{i+1}\cdots c_{j-1}c_j(c_{j-1}\cdots c_1c_{2g}^{-1}\cdots c_{j+1}^{-1})^{s-1} c_{j}^{-1}
 + c_ic_{i+1}\cdots c_{j-1}(c_{j+1}^{-1}\cdots c_{2g}^{-1}c_1\cdots c_{j-1})^{s-1},
\end{array}
$$
which is reduced to $0$ by  $\myd{1,j,s-1}$ if $s>1$ or by $\myd{8,j}$ if $s=1$.

(C62) If $j\ne i$, the composition set $\comp(\myd{6,j}, \myd{2,i,s})$ is empty. If $j = i$, the unique composition is:
$$\begin{array}{cl}
&
\myd{6,j}
 (c_{j+1}\cdots c_{2g}c_1^{-1}\cdots c_{j-1}^{-1})^{s} c_j^{-1}
 - c_{j-1}^{-1}\cdots c_1^{-1}c_{2g}\cdots c_{j+1} \myd{2,i,s}\\
 = &
 -  c_{j}(c_{j+1}\cdots c_{2g}c_1^{-1}\cdots c_{j-1}^{-1})^{s+1}c_j^{-1}
  + (c_{j-1}^{-1}\cdots c_1^{-1}c_{2g}\cdots c_{j+1})^{s+1} \\
\rdto{\myd{2,j,s+1}} & 0.
\end{array}
$$

(C68) If $j\ne i$, the composition set $\comp(\myd{6,j}, \myd{8,i})$ is empty. If $j = i$, the unique composition is:
$$\myd{6,j}c_j^{-1} -  c_{j-1}^{-1}\cdots c_1^{-1}c_{2g}\cdots c_{j+1}\myd{8,i}
 = -c_{j}\cdots c_1c_{2g}^{-1}\cdots c_{j}^{-1}
 + c_{j-1}^{-1}\cdots c_1^{-1}c_{2g}\cdots c_{j+1}
\rdto{\myd{1,j,1}} 0.
$$

(C71) If $j\ne i$, the composition set $\comp(\myd{7,j}, \myd{1,i,s})$ is empty. If $j = i$, the unique composition is:
$$\begin{array}{cl}
&
\myd{7,j}(c_{j-1}\cdots c_1c_{2g}^{-1}\cdots c_{j+1}^{-1})^s  c_{j}^{-1} - c_j^{-1} \myd{1,j,s}\\
 = &
 - (c_{j-1}\cdots c_1c_{2g}^{-1}\cdots c_{j+1}^{-1})^s  c_{j}^{-1}
   + c_j^{-1}(c_{j+1}^{-1}\cdots c_{2g}^{-1}c_1\cdots c_{j-1})^s \\
\rdtop{\myd{5,j}}  & 0.
\end{array}
$$

(C72) If $j\ne i$, the composition set $\comp(\myd{7,j}, \myd{2,i,s})$ is empty. If $j = i$, the unique composition is:
$$\begin{array}{cl}
&
\myd{7,j}(c_{j+1}\cdots c_{2g}c_1^{-1}\cdots c_{j-1}^{-1})^s  c_{j}^{-1} - c_j^{-1} \myd{2,j,s}\\
 = &
 - (c_{j+1}\cdots c_{2g}c_1^{-1}\cdots c_{j-1}^{-1})^s  c_{j}^{-1}
   + c_j^{-1}(c_{j-1}^{-1}\cdots c_1^{-1}c_{2g}\cdots c_{j+1})^s,
\end{array}
$$
which is reduced (in $s$ steps) to $0$ by  $\myd{6,j+1}$ if $j<2g$ or by $\myd{3}$ if $j=2g$.

(C74) If $j\ne 1$, the composition set $\comp(\myd{7,j}, \myd{4})$ is empty. If $j = 1$, the unique composition is:
$$\begin{array}{rcl}
 \myd{7,1}c_2\cdots c_{2g} - c_1^{-1} \myd{4}
 &  = &  - c_2\cdots c_{2g} + c_1^{-1} c_{2g}\cdots c_1\\
 &\rdto{\myd{6,2}} & - c_2\cdots c_{2g} + c_2\cdots c_{2g}c_1^{-1}c_1\\
 &\rdto{\myd{7,1}} & 0.
\end{array}$$

(C83) If $j\ne 2g$, the composition set $\comp(\myd{8,j}, \myd{3})$ is empty. If $j = 2g$, the unique composition is:
$$
 \myd{8,2g}c_{2g-1}^{-1}\cdots c_1^{-1} - c_{2g}\myd{3}
   =   - c_{2g-1}^{-1}\cdots c_1^{-1} + c_{2g}c_1^{-1} \cdots c_{2g}^{-1}
 \rdto{\myd{2,2g,1}}  0.
$$

(C85) If $j\ne i$, the composition set $\comp(\myd{8,j}, \myd{5,i})$ is empty. If $j = i$, the unique composition is:
$$
 \myd{8,j}c_{j+1}^{-1}\cdots c_{2g-1}^{-1}c_1\cdots c_{j-1} - c_j\myd{5,j}
 =
 - c_{j+1}^{-1}\cdots c_{2g-1}^{-1}c_1\cdots c_{j-1}
  + c_j \cdots c_1c_{2g}^{-1}\cdots c_j^{-1}
 \rdto{\myd{1,j,1}} 0.
$$

(C86) If $j\ne i-1$, the composition set $\comp(\myd{8,j}, \myd{6,i})$ is empty. If $j = i-1$, the unique composition is:
$$
\begin{array}{cl}
 & \myd{8,j}c_{j-1}^{-1}\cdots c_1^{-1}c_{2g}\cdots c_{j+1} - c_j\myd{6,j+1}\\
 = &
 - c_{j-1}^{-1}\cdots c_1^{-1}c_{2g}\cdots c_{j+1}
  + c_j c_{j+1}\cdots c_{2g}c_1^{-1}\cdots c_j^{-1} \\
 \rdto{\myd{2,j,1}} &0.
\end{array}
$$
Here, we also drop the proofs of two obvious cases: C(78) and C(87).
\end{proof}

Given any word $\gamma$ in the letter set $\{c_1, c^{-1}_1,\ldots c_{2g}, c^{-1}_{2g}\}$, the reduced form of $\gamma$ can be computed easily. Hence, this theorem can be regarded as a re-visiting of the word problem of the surface groups.

Note that in our \gr basis $D$, the inverse of a leading word of any polynomial in $D$ is always the leading word of another element. More precisely, $\lw(\myd{1,j,s})= \lw(\myd{2,j,s})^{-1}$, $\lw(\myd{3})= \lw(\myd{4})^{-1}$, $\lw(\myd{5,i})= \lw(\myd{6,i})^{-1}$, $\lw(\myd{7,k})= \lw(\myd{7,k})^{-1}$ and $\lw(\myd{8,k})= \lw(\myd{8,k})^{-1}$ for all possible $i,j,k$. Thus, we have

\begin{Corollary}\label{Cor:inverse}
Let $\alpha =a_1\cdots a_k$ be a word in the letter set $\{c_1, c^{-1}_1,\ldots c_{2g}, c^{-1}_{2g}\}$. Then $\alpha$ is $D$-reducible if and only if its inverse $a_k^{-1}\cdots a_1^{-1}$ is $D$-reducible.
\end{Corollary}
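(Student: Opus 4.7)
The plan is to verify the Corollary directly from the observation just preceding it, namely that the collection of leading words of elements of $D$ is closed under the operation of inverting a word. Concretely, the preceding paragraph records the pairings
$$
\lw(\myd{1,j,s})^{-1} = \lw(\myd{2,j,s}),\ \
\lw(\myd{3})^{-1} = \lw(\myd{4}),\ \
\lw(\myd{5,i})^{-1} = \lw(\myd{6,i}),
$$
together with $\lw(\myd{7,k})^{-1} = \lw(\myd{7,k})$ and $\lw(\myd{8,k})^{-1} = \lw(\myd{8,k})$; so for every $f \in D$ there is some $f' \in D$ with $\lw(f') = \lw(f)^{-1}$.

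Next I would unpack what ``$\alpha$ is $D$-reducible'' means. By the definition of reduction given in Section~3, $\alpha$ is $D$-reducible precisely when there exist $f \in D$ and words $\gamma, \delta$ with
$$\alpha \;=\; \gamma\, \lw(f)\, \delta.$$
Inverting both sides letter by letter gives
$$\alpha^{-1} \;=\; \delta^{-1}\, \lw(f)^{-1}\, \gamma^{-1}.$$
By the closure property above, $\lw(f)^{-1} = \lw(f')$ for some $f' \in D$. Hence $\lw(f')$ occurs as a subword of $\alpha^{-1}$, which says exactly that $\alpha^{-1}$ is $D$-reducible.

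For the converse I simply apply the same reasoning with $\alpha$ replaced by $\alpha^{-1}$ and use the involutive identity $(\alpha^{-1})^{-1} = \alpha$; alternatively I can remark that the whole argument is symmetric in $\alpha$ and $\alpha^{-1}$. There is no real obstacle here; the substance of the result is entirely carried by the symmetry already observed in the leading-word pairs of $D$, which in turn reflects that inverting the surface relator $c_1\cdots c_{2g}= c_{2g}\cdots c_1$ yields a relator of the same shape, and that every auxiliary generator $\myd{1,j,s},\myd{5,i},\ldots$ introduced during the Buchberger completion came in a mutually inverse pair. The only mildly delicate point to double-check in writing is that the notion of $D$-reducibility used here is ``some leading word of $D$ is a subword'' rather than some weaker or stronger formulation, which is indeed the definition fixed in Section~3.
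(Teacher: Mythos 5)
Your proposal is correct and follows exactly the paper's own argument: the paper's entire justification for this Corollary is the preceding observation that the leading words of $D$ come in mutually inverse pairs, and you have simply spelled out the (immediate) consequence that a leading word occurs as a subword of $\alpha$ if and only if its inverse occurs as a subword of $\alpha^{-1}$. Nothing further is needed.
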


This property will play a central role in our treatment to geometric intersections, and it guarantees that the components of common value pairs is the same as the common value classes for loops determined by cyclically reduced words.

\section{Cyclically reduced words and common value classes}

By using cyclically $D$-reduced words, we shall show that common value classes of two piecewise-geodesic loops coincide with components of common value pairs.

\begin{Definition}
A word $\alpha = a_1\cdots a_s$ is said to be cyclically $D$-reduced if all of its rotations: $$a_1\cdots a_s,\  a_2a_3\cdots a_sa_1,\ a_3\cdots a_sa_1a_2,\  \ldots,\  a_sa_1\cdots a_{s-1}$$ are all $D$-reduced.
\end{Definition}

\begin{Corollary}
A word $\alpha$ is cyclically $D$-reduced if and only if $\alpha^2$ is $D$-reduced.
\end{Corollary}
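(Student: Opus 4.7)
The plan is to prove the two implications separately. The easy direction, $\alpha^2$ being $D$-reduced implies $\alpha$ is cyclically $D$-reduced, is immediate: every rotation $a_{i+1}\cdots a_s a_1\cdots a_i$ of $\alpha$ occurs verbatim as the subword of $\alpha^2$ in positions $i+1$ through $i+s$, and a subword of a $D$-reduced word is itself $D$-reduced, since a leading word embedded in the subword is a fortiori embedded in the whole word.

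For the converse, I would argue by contradiction: assume $\alpha$ is cyclically $D$-reduced while $\alpha^2$ contains some leading word $\lw(d)$, $d\in D$, as a subword of length $\ell\le 2s$, where $s=|\alpha|$. If $\ell\le s$, then the occurrence of $\lw(d)$, starting at some position $i+1$ with $0\le i\le s-1$, is a prefix of the rotation $a_{i+1}\cdots a_s a_1\cdots a_i$, because that rotation is exactly the length-$s$ subword of $\alpha^2$ starting at position $i+1$. Hence that rotation is not $D$-reduced, contradicting the cyclic $D$-reducedness of $\alpha$.

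The main obstacle is the remaining case $s<\ell\le 2s$, where the leading word overflows any single rotation. The key observation is that any window of length $\ell$ inside $\alpha^2=\alpha\alpha$ is automatically $s$-periodic as a word: by construction, the $k$-th and $(k+s)$-th letters of any such window are the same letter of $\alpha$. Thus $\lw(d)$ would admit $s$ as a period strictly smaller than $\ell$. I would then establish, by direct inspection of Theorem~\ref{grobner}, the following no-short-period fact: no leading word of an element of $D$ has a period strictly less than its own length. For $\myd{3}, \myd{4}, \myd{5,i}, \myd{6,i}$, the leading word consists of $2g$ pairwise distinct letters in $\{c_1^{\pm1},\ldots,c_{2g}^{\pm1}\}$, so its first letter cannot match any later one; for $\myd{7,i}, \myd{8,i}$ the two letters are inverses and hence distinct; and for $\myd{1,j,s'}$ and $\myd{2,j,s'}$ the first letter $c_j$ never reappears in the word, since the interior blocks use only generators $c_k^{\pm1}$ with $k\ne j$ and the terminal letter is $c_j^{-1}\ne c_j$. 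This rules out $\lw(d)$ having period $s<\ell$, giving the required contradiction and completing the proof.
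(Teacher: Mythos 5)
Your argument is correct and complete; note that the paper states this corollary without any proof, so you are supplying content the authors left implicit. The ``only if'' direction does require the key point you identified: a leading word occurring in $\alpha^2$ could straddle the seam with length $\ell>s=|\alpha|$, and then one must rule this out via the $s$-periodicity of any window of $\alpha^2$ together with the fact that no leading word in Theorem~\ref{grobner} is properly periodic. Your case check of that aperiodicity is accurate: for $\myd{1,j,s'}$ and $\myd{2,j,s'}$ the leading word is the longer term $c_j(\cdots)^{s'}c_j^{-1}$, whose interior block avoids $c_j^{\pm1}$ entirely, so the initial letter $c_j$ never recurs and in particular cannot equal the letter at position $1+s$; the leading words of $\myd{3}$--$\myd{6}$ consist of $2g$ pairwise distinct letters, and those of $\myd{7}$, $\myd{8}$ of two distinct letters. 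One tiny imprecision: in the case $\ell\le s$ you restrict the starting position of the occurrence to $i+1$ with $0\le i\le s-1$, but an occurrence could begin in the second copy of $\alpha$; this is repaired by the same periodicity observation (shift the occurrence back by $s$), so it costs nothing.
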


Because of our \gr basis $D$, $\alpha$ is $D$-reduced if and only if any of its rotation does not contain any leading word in Theorem~\ref{grobner}. Thus, $D$-reducibility is algorithmically decidable.

%Moveover, we have

%\begin{alg} Input: a word $\alpha$ in letter $c_j^{\pm}$, $j=1,2,\ldots, 2g$, Output: a cyclically %$D$-reduced word $\beta$ such that $\alpha$ and $\beta$ are conjugate in $\pi_1(F_g, y_0)$.
%\end{alg}

It is clear that any two words will  present conjugate elements in $\pi_1(F_g)$ if they have the same cyclically $D$-reduced form. But the converse is not true. For example, $c_4c_1^{-1}c_3$ and $c_4c_3c_1^{-1}$ are both cyclically $D$-reduced, and hence are different cyclically $D$-reduced forms. But they are conjugate in $\pi_1(F_2)$, because $c_2^{-1}c_1^{-1}(c_4c_3c_1^{-1})c_1c_2 = c_3c_4c_1^{-1}c_2^{-1} c_2 = c_3c_4c_1^{-1} \in \pi_1(F_2)$.

The importance of ``cyclically $D$-reduced'' lies in:

\begin{thm}\label{thm-connect-class}
Let $\varphi,\psi: S^1\to F_g$ be two piecewise-geodesic loops which are determined by cyclically $D$-reduced words.
Then for any lifting $\tilde \varphi$ of $\varphi$ and any lifting $\tilde \psi$ of $\psi$, the set of common value pair of $\tilde \varphi$ and $\tilde \psi$ is connected, and hence each common value class is connected.
\end{thm}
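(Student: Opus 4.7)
The plan is a proof by contradiction. Assume $\cvp(\tilde\varphi, \tilde\psi)$ has at least two connected components $C_1$ and $C_2$; I will extract from this assumption a word identity that contradicts the cyclic $D$-reducedness of the defining words $u_1\cdots u_m$ and $v_1\cdots v_n$. First, by the upstairs analogue of Theorem~\ref{th:cvp-component}, every component of $\cvp(\tilde\varphi,\tilde\psi)$ contains a point of the lattice $\tfrac{1}{m}\mathbb{Z}\times\tfrac{1}{n}\mathbb{Z}$ (types (1)--(3) contain at least one such point by their explicit parametrizations, and types (4), (5) contain infinitely many). So pick breakpoint representatives $(k_1/m, l_1/n)\in C_1$ and $(k_2/m, l_2/n)\in C_2$.

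Write $\tilde\varphi=\beta_1\tilde\varphi_S$ and $\tilde\psi=\beta_2\tilde\psi_S$ as deck-transformation translates of the standard liftings (Definition~\ref{def-standard-lifting}), and set $\alpha=\beta_1^{-1}\beta_2$. Since the deck action on $\mathbb{H}^2$ is free, the identities $\tilde\varphi(k_i/m)=\tilde\psi(l_i/n)$ become group identities $\rho_\varphi(k_i)=\alpha\,\rho_\psi(l_i)$ in $\pi_1(F_g)$, where $\rho_\varphi(k)$ denotes the element read off the bi-infinite periodic word along $\tilde\varphi_S$ up to parameter $k/m$ (and analogously $\rho_\psi$). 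Combining the two equations eliminates $\alpha$ and yields
\[
\rho_\varphi(k_1)^{-1}\rho_\varphi(k_2)\;=\;\rho_\psi(l_1)^{-1}\rho_\psi(l_2).
\]
Each side is a subword (possibly wrapping around the cyclic word, possibly formally inverted) of $u_1\cdots u_m$ or $v_1\cdots v_n$; since these words are cyclically $D$-reduced and the Corollary after Theorem~\ref{grobner} shows that $D$-reducibility is preserved under inversion, each side is itself a $D$-reduced word. Because $D$ is a Gr\"obner basis for the ideal of $\pi_1(F_g)$, two $D$-reduced words representing the same group element are identical, so the two sides agree letter-by-letter.

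This letter-by-letter equality is precisely the matching condition of type (2) or type (3) in the Corollary to Theorem~\ref{th:cvp-component}, and it produces an arc of common value pairs in $\cvp(\tilde\varphi,\tilde\psi)$ joining $(k_1/m, l_1/n)$ to $(k_2/m, l_2/n)$, contradicting their being in distinct components. The main obstacle lies in justifying the third step, namely that both subwords (with possible wrap-around and formal inversion) are genuinely $D$-reduced, so that Gr\"obner normal-form uniqueness applies; this rests on combining cyclic $D$-reducedness with the inversion Corollary. The degenerate case $k_1=k_2$ must be handled separately: if $k_1=k_2$ with $l_1\ne l_2$, the right-hand side is a non-empty $D$-reduced word representing the identity, which would reduce to the empty word under $D$, contradicting $D$-reducedness --- so in fact $(k_1,l_1)=(k_2,l_2)$ and the original pairs coincide. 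The final clause of the theorem, that each common value class is connected, then follows because a common value class equals $(p_X\times p_X)(\cvp(\tilde\varphi,\alpha\tilde\psi))$ for a suitable $\alpha$, hence is a continuous image of a connected set.
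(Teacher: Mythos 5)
Your proposal is correct and follows essentially the same route as the paper's proof: reduce to lattice points via Theorem~\ref{th:cvp-component}, translate the two coincidences into group identities through the standard liftings, eliminate the deck transformation to get a word identity between (possibly inverted) subwords, and invoke $D$-reducedness of subwords of cyclically $D$-reduced words together with uniqueness of Gr\"obner normal forms to force the type (2)/(3) matching condition. The only cosmetic difference is that you frame it as a contradiction while the paper directly shows any two common value pairs of the liftings lie in one component.
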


\begin{proof}
Let $\mu = u_1\cdots u_m$ and $\nu = v_1\cdots v_n$ be two cyclically $D$-reduced words determining $\varphi$ and $\psi$, respectively. Suppose  that two liftings $\tilde \varphi, \tilde \psi: \mathbb{R}^1\to \mathbb{D}^2$ have two common value pairs: $(x'_\mu, x'_\nu)$ and $(x''_\mu, x''_\nu)$. By Theorem~\ref{th:cvp-component}, we can find common value pairs $(\frac{k'}{m}, \frac{l'}{n})$ and $(\frac{k''}{m}, \frac{l''}{n})$ with integers $k',l', k'', l''$ such that $(x'_\mu, x'_\nu)$ and $(\frac{k'}{m}, \frac{l'}{n})$ lie in the same component of common value pairs of $\tilde \varphi$ and  $\tilde \psi$, and so $(x''_\mu, x''_\nu)$ and $(\frac{k''}{m}, \frac{l''}{n})$.

Suppose that $\tilde \varphi_S$ and  $\tilde \psi_S$ are respectively standard liftings (see Definition~\ref{def-standard-lifting}) of $\varphi$ and $\psi$. Then $\tilde \varphi = \alpha\tilde \varphi_S$ and $\tilde \psi = \beta\tilde \psi_S$, where $\alpha, \beta\in \deck(\mathbb{D}^2)\cong \pi_1(F_g, y_0)$.
Since $\tilde \varphi(\frac{k'}{m}) = \tilde \psi(\frac{l'}{n})$, we have that $\alpha\tilde \varphi_S(\frac{k'}{m}) = \beta\tilde \psi_S(\frac{l'}{n})$. It follows that $\alpha u_1\cdots u_{k'}(\tilde y_0) = \beta v_1\cdots v_{l'}(\tilde y_0)$. By the uniqueness of covering transformation, we have that $\alpha u_1\cdots u_{k'} = \beta v_1\cdots v_{l'}$ in $\deck(\mathbb{D}^2)\cong \pi_1(F_g, y_0)$. With the same reason, we also have that $\alpha u_1\cdots u_{k''} = \beta v_1\cdots v_{l''}$. Thus, $$\alpha^{-1}\beta = u_1\cdots u_{k'} (v_1\cdots v_{l'})^{-1} = u_1\cdots u_{k''} (v_1\cdots v_{l''})^{-1}.$$
We obtain that
$$(u_1\cdots u_{k''})^{-1}u_1\cdots u_{k'}  =  (v_1\cdots v_{l''})^{-1} v_1\cdots v_{l'} \in \pi_1(F_g, y_0).$$
Hence, after some cancelation (or say $\myd{7}\sqcup\myd{8}$-reductions), as two words, we have:
$$(u_{\min\{k',k''\}+1}\cdots u_{\max\{k',k''\}})^{sgn(k'-k'')}=
(v_{\min\{l',l''\}+1}\cdots v_{\max\{l',l''\}})^{sgn(l'-l'')}.$$
Note that both sides are $D$-reduced because they are respectively subwords of $D$-reduced words $\mu=u_1\cdots u_m$ or $\mu^{-1}$ and $\nu= v_1\cdots v_n$ or $\nu^{-1}$. Note that $\mu^{-1}$ and $\nu^{-1}$ are both $D$-reduced from Corollary~\ref{Cor:inverse}.

If $k'=k''$, then the left hand side of equality above is trivial, and therefore the right hand side is also trivial, i.e. $l'=l''$. This implies that $(x'_\mu, x'_\nu)$ and $(x''_\mu, x''_\nu)$ lie in the same component of common value pairs, containing $(\frac{k'}{m}, \frac{l'}{n})=(\frac{k''}{m}, \frac{l''}{n})$.

If $k'>k''$, then the left hand side of equality above is $u_{k''+1}\cdots u_{k'}$. The uniqueness of $D$-reduced form implies that there are only two possibilities: (1) $l'-l''=k'-k''$ and $u_{k''+j} = v_{l''+j}$ for $j=1, \ldots, k'-k''$, (2) $l''-l'=k'-k''$ and $u_{k''+j} = v_{l'-j+1}^{-1}$ for $j=1,\ldots, k'-k''$. By Theorem~\ref{th:cvp-component} and its Corollary, we can see that $(\frac{k'}{m}, \frac{l'}{n})$ and $(\frac{k''}{m}, \frac{l''}{n})$ lie in the same component of common value pairs. It follows that $(x'_\mu, x'_\nu)$ and $(x''_\mu, x''_\nu)$ lie in the same component.

The proof of case $k'<k''$ is similar.
\end{proof}

This theorem implies that the number of  common value classes any two  piecewise-geodesic loops is practically computable.

\section{Indices of common value classes}

In this section, we shall show a method to compute the indices of common value classes. Of most importance is to determine if a common value class is essential, i.e. has a non-zero index.

Recall from \cite[Def. 3.1]{guying} that the (homology) homomorphism index $\mathcal{L}_*(\varphi\times \psi, C, \Delta)$ of an isolated common value subset $C$ of $\varphi, \psi\colon S^1\to F_g$ is defined to be the composition of following:
$$H_*(S^1\times S^1) \stackrel{j_*}{\to} H_*(S^1\times S^1, S^1\times S^1-C) \stackrel{e^{-1}_*}{\to} H_*(N, N-C)  \stackrel{(\varphi\times\psi)_*}{\to} H_*(F_g\times F_g, F_g\times F_g-\Delta),$$
where $N$ is a neighborhood of $C$ with $N\cap \cvp(\varphi, \psi)= C$. Clearly, the non-triviality of the homomorphisms happens at dimension $2$ only. Note that $H_2(S^1\times S^1)\cong H_2(F_g\times F_g, F_g\times F_g-\Delta)\cong \mathbb{Z}$.  The homomorphism index $\mathcal{L}_*(\varphi\times \psi, C, \Delta)$ can be converted into a numerical one if both of these two homology groups have chosen generators. This leads to the following:

\begin{Proposition}\label{Prop-essential-intersectionnumber}
Let $C$ be an isolated common value subset of two maps $\varphi, \psi\colon S^1\to F_g$. Then homomorphism index $\mathcal{L}_2(\varphi\times \psi, C, \Delta)$ of $\varphi\times \psi$ at $C$ is given by $[S^1]\times [S^1]\mapsto i(\varphi, \psi; C)[\tau_\Delta]$, where $[S^1]$ is the fundamental class, $i(\varphi, \psi; C)$ is the intersection number of $\varphi$ and $\psi$ at $C$, and $[\tau_\Delta]$ is the Thom class of diagonal $\Delta$ in $F_g\times F_g$.
\end{Proposition}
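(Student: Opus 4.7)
My plan is to identify the integer by which $\mathcal{L}_2$ acts via the Thom isomorphism for the diagonal. The diagonal $\Delta\subset F_g\times F_g$ is a closed oriented surface embedded as a codimension-$2$ submanifold of an oriented $4$-manifold, so the Thom isomorphism (equivalently, the tubular neighbourhood theorem together with excision) gives $H_2(F_g\times F_g, F_g\times F_g-\Delta)\cong H_0(\Delta)\cong \mathbb{Z}$ with generator $[\tau_\Delta]$. Similarly $H_2(S^1\times S^1)\cong \mathbb{Z}$ is generated by $[S^1]\times [S^1]$. Hence $\mathcal{L}_2(\varphi\times\psi, C, \Delta)$ is multiplication by a well-defined integer $n$, and the proposition reduces to showing $n=i(\varphi,\psi;C)$.

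To compute $n$ locally, I fix an open neighbourhood $N$ of $C$ in $S^1\times S^1$ with $N\cap \cvp(\varphi,\psi)=C$. Under the excision isomorphism $e_*\colon H_2(N,N-C)\to H_2(S^1\times S^1, S^1\times S^1-C)$, the class $e_*^{-1}j_*([S^1]\times[S^1])$ is represented by the local fundamental class of the pair $(N,N-C)$. Because $\varphi\times\psi$ sends $N-C$ into $F_g\times F_g-\Delta$, the last arrow $(\varphi\times\psi)_*$ sends this local class to $n[\tau_\Delta]$, where $n$ is, tautologically, the local degree of $\varphi\times\psi$ at $C$ with respect to the diagonal $\Delta$.

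It remains to match this local degree with the intersection number $i(\varphi,\psi;C)$. When $C$ consists of a finite set of transverse intersections, each point contributes $\pm 1$ to the local degree according to whether $\varphi\times\psi$ preserves or reverses the orientation of the normal bundle of $\Delta$ at that point; summing these signs is the classical definition of $i(\varphi,\psi;C)$. For non-transverse $C$ (for instance, when $C$ is an arc or a circle as in cases (2)--(5) of Theorem~\ref{th:cvp-component}), I would first replace $\varphi,\psi$ by small transverse perturbations within their homotopy classes and invoke the homotopy invariance of both quantities.

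The main obstacle I expect is precisely this last reduction: ensuring that under a small perturbation the isolated subset $C$ splits into finitely many transverse common value pairs whose signed count still equals $n$. This should follow from the continuation property of the homomorphism index from \cite[\S 3]{guying}, combined with the local excision description of $n$ given above, applied componentwise to the finitely many transverse pairs created by the perturbation. All other steps are formal consequences of the Thom isomorphism and of unwinding the definition of $\mathcal{L}_2$.
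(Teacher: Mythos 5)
Your proposal is correct and follows essentially the same route as the paper: the paper's entire proof is the one-line observation that the claim holds ``by definition of intersection number'' once the orientations of $S^1$ and $F_g$ (via $\{e^{\theta i}\}$ and $\mathbb{H}^2$) are fixed, and your write-up is just a careful unwinding of that via the Thom isomorphism and the identification of the local degree with the signed count of transverse intersections. The perturbation/continuation step you flag as the main obstacle is indeed the only nontrivial content, and the paper implicitly absorbs it into the definition of $i(\varphi,\psi;C)$, so nothing further is needed.
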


\begin{proof} It is obvious by definition of intersection number. Here, the orientation of the circle $S^1$ and the surface $F_g$ are given respectively by their natural coordinates of $\{e^{\theta i}\}$ and $\mathbb{D}^2$.
\end{proof}

\begin{Corollary}
Let $C$ be an isolated common value subset of two maps $\varphi, \psi\colon S^1\to F_g$. Then $C$ is essential, i. e. has a non-zero homomorphism index, if and only if the intersection number of $\varphi$ and $\psi$ at $C$ is not zero. Moreover, if $C$ is connected, then the intersection number of $\varphi$ and $\psi$ at $C$ is $-1$, $0$ or $1$.
\end{Corollary}

In this paper, we shall use the local intersection number to indicate the index of a set of common value pair.  It is known that if $\eta, \eta': S^1\to F_g$ are homotopic loops, then their homotopy related liftings $\tilde \eta, \tilde \eta'$ have the same ending points on the circle of infinity of $\mathbb{D}^2$, (cf. \cite[Lemma 2.3]{Casson}). Thus, the homotopy invariance of index of a common value class is almost obvious in our case: for maps from the circle to a surface.

Next theorem shows that the index of each component of common value pair can be read locally from subword pair determining this component, if the loops in consideration are piecewise-geodesic.

Let us fix some notations. For any three points $P, Q, R$ in the oriented circle $S^1$, the number $\Theta(P, Q, R)$ is defined to be $1$ if $P,Q,R$ are distinct points and the cyclic order of $P,Q,R$ coincides with the given orientation of $S^1$, to be $-1$ if $P,Q,R$ are distinct points and the cyclic order of $P,Q,R$ is different from the given orientation of $S^1$, and to be $0$ otherwise.

\newcommand{\myss}[2]{-{\mathrm T}^{#1}_{#2}}
\newcommand{\mytt}[2]{{\mathrm T}^{#1}_{#2}}
\newcommand{\mydd}[3]{\Theta(#1,#2,#3)}

\begin{Theorem}\label{thm-index}
Let $\varphi,\psi: S^1\to F_g$ be two piecewise-geodesic loops which are determined by cyclically $D$-reduced words $u_1\cdots u_m$ and $v_1\cdots v_n$, respectively. Each $u_i$ and $v_j$ lies in the letter set $\{c_1, c^{-1}_1,\ldots c_{2g}, c^{-1}_{2g}\}$. Then all possible components of common value set $\cvp(\varphi, \psi)$ and their indices (intersection numbers) are listed as follows.
\begin{center}
\begin{tabular}{|c|l|l|}
  \hline
  Type & data of $C$ & \ \ \ $ind(\varphi, \psi; C)$ \\ \hline
  (1) & $(k,l,0)$ &
  $\frac{1}{2}\big(\Theta(-T(u_{k}), -T(v_{l}), T(u_{k+1}))
  + \Theta(T(u_{k+1}), T(v_{l+1}), -T(u_{k}))\big)$
 \\ \hline
  (2) & $(k,l,q)$
  & $\frac{1}{2}\big(\Theta(-T(u_{k}), -T(v_l), T(u_{k+1})) + \Theta(T(u_{k+q+1}), T(v_{l+q+1}), -T(u_{k+q}))\big) $ \\ \hline
  (3) & $(k,l,-q)$
 & $-\frac{1}{2}\big(\Theta(-T(u_{k}),  T(v_{l+q+1}), T(u_{k+1})) - \Theta(T(u_{k+q+1}), -T(v_{l}), -T(u_{k+q}))\big) $ \\ \hline
  (4)& &\ \ $0$ \\ \hline
  (5) & & \ \ $0$ \\   \hline
\end{tabular}
\end{center}
Here $q$ is a positive integer.
\end{Theorem}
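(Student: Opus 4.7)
The plan is to prove the formulas by exploiting homotopy-invariance of the local index (Proposition~\ref{Prop-essential-intersectionnumber}): for each component I will produce a small perturbation of $\psi$ that resolves the component into a finite signed collection of transverse point intersections on $F_g$, and then identify this signed count with the claimed $\Theta$-expression.

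Types (4) and (5) are the easy cases. In type (4) the condition $u_{k+r}=v_{l+r}$ for every integer $r$ forces $\varphi$ and $\psi$ to have the same image in $F_g$ with compatible parametrisations; using orientability of $F_g$, I push $\psi$ off $\varphi$ within a regular tubular neighborhood to obtain $\psi'$ disjoint from $\varphi$ near this component, giving local index $0$. Type (5) is the same with $\psi$ tracing $\varphi$ in reverse, and the same pushoff argument applies.

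For types (1), (2), (3), I localise the perturbation to the vertices of the component. In type (1) the component is an isolated vertex-on-vertex intersection at $y_0$, and the Corollary to Theorem~\ref{th:cvp-component} guarantees that the four tangent directions $-T(u_k), -T(v_l), T(u_{k+1}), T(v_{l+1})\in\partial\mathbb{D}^2$ are pairwise distinct. Displacing $\psi$'s vertex a small distance off of $y_0$ resolves the component into at most two transverse crossings of $\psi'$ with the two $\varphi$-segments incident to $y_0$. In types (2) and (3), $\varphi$ and $\psi$ share a common arc of length $q$; sliding $\psi$ to one side of $\varphi$ along this arc eliminates the coinciding arc and leaves at most one residual transverse crossing at each of its two endpoints, each of which is locally the same setup as in type (1). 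The key computation is then the identification of the signed crossing count with the displayed $\Theta$-expression: at each vertex in question, the pair $\bigl(-T(u_\bullet),T(u_{\bullet+1})\bigr)$ cuts $\partial\mathbb{D}^2$ into two oriented arcs and hence separates a small disk around the vertex into two sides; the first $\Theta$ records on which side $\psi$'s incoming direction lies, and the second $\Theta$ records on which side $\psi$'s outgoing direction lies. A signed transverse crossing appears precisely when the two $\Theta$'s agree, and its sign is their common value; when they disagree, $\psi$ stays on one side of $\varphi$ and no crossing occurs. Hence $\Theta_1+\Theta_2$ equals $\pm 2$ on a crossing and $0$ otherwise, which is the origin of the factor $\tfrac{1}{2}$. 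In type (3) the reversed traversal of the common arc by $\psi$ interchanges the roles of $\psi$'s incoming and outgoing tangent directions at both endpoints and reverses the orientation sign of the induced crossing, producing the overall minus sign together with the replacements of the corresponding $T(v_\bullet)$ by their negatives.

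The main obstacle I anticipate is the orientation bookkeeping: one must verify that the orientation of $\partial\mathbb{D}^2$ underlying $\Theta$, the orientation of $F_g$ inherited from $\mathbb{H}^2$, and the sign convention of the intersection index of Proposition~\ref{Prop-essential-intersectionnumber} all align with the claimed signs. I would check this in one small archetypal configuration (e.g.\ $g=2$ with short cyclically $D$-reduced words) by drawing the tangent directions in the Poincar\'e disk and matching local signs of the resulting transverse crossings; once the signs agree there, the general case follows by the isometric action of the deck group and the combinatorial symmetry between the various vertex configurations.
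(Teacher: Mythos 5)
Your proposal is correct and follows essentially the same route as the paper: types (4) and (5) are handled by the identical normal push-off plus homotopy invariance, and for types (1)--(3) your identification of the two $\Theta$-terms with the sides of $\varphi$ on which $\psi$ enters and leaves (so that their sum is $\pm 2$ at a crossing and $0$ otherwise, explaining the factor $\tfrac{1}{2}$) is exactly what the paper reads off from the configuration of the ideal endpoints $\pm T(\cdot)$ of the lifted geodesic segments. The only cosmetic difference is that you resolve the corner (or shared-arc) components by a small perturbation and count signed transverse crossings, whereas the paper computes the local homological index directly from the lifted picture at $\tilde y_0$; the sign bookkeeping you flag is settled exactly as you propose, by checking one archetypal configuration in the Poincar\'e disk.
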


\begin{proof}
All possible components of $\cvp(\varphi, \psi)$ are already given in Theorem~\ref{th:cvp-component}. The notation $T(\cdot)$ is defined in (\ref{eq-T}).

Consider the first type in Theorem~\ref{th:cvp-component}: the component $C$ of the set $\cvp(\varphi, \psi)$ is a singleton $\{( e^{\frac{2\pi k i}{m}}, e^{\frac{2\pi l i}{n}})\}$. Then, as in the proof of Theorem~\ref{th:cvp-component}, we have that
$$\{(e^{\frac{2\pi k i}{m}},  e^{\frac{2\pi l i}{n}})\} = (p_{S^1}\times p_{S^1})(\cvp((u_1\cdots u_k)^{-1}\tilde \varphi_S,\ (v_1\cdots v_l)^{-1}\tilde \psi_S)), $$
where $\tilde \varphi_S$ and $\tilde \psi_S$ are respectively standard liftings of $\varphi$ and $\psi$. By item (1) of the Corollary of Theorem~\ref{th:cvp-component} and our assumption, the point $\tilde y_0$ is the unique intersection of $(u_1\cdots u_k)^{-1}\tilde \varphi_S$ and $(v_1\cdots v_l)^{-1}\tilde \psi_S$. By definition of piecewise-geodesic loops and standard liftings, we have $\tilde \varphi_S(\frac{s}{m}) = u_1\cdots u_s(\tilde y_0)$ and $\tilde \psi_S(\frac{t}{n}) = v_1\cdots v_t(\tilde y_0)$ for any integers $s$ and $t$. Thus, around $\tilde y_0$, we can see the liftings $\tilde u_k, \tilde u_{k+1}, \tilde v_l,\tilde v_{l+1}$ of $u_k, u_{k+1}, v_l, v_{l+1}$. Here, $u_s$'s and $v_t$'s are considered as sub-loops of $\varphi$ and $\psi$, respectively. Corresponding liftings are sub-arcs of  $\tilde \varphi_S$ or $\tilde \psi_S$.

Thus, we can obtain the local intersection number of $(u_1\cdots u_k)^{-1}\tilde \varphi_S$ and $(v_1\cdots v_l)^{-1}\tilde \psi_S$ at $\tilde y_0$, by using the positions of attracting and expanding fixed points $T(\cdot)$'s (defined in (\ref{eq-T})) of corresponding generators. The local intersections and their indices are illustrated as follows.
\begin{center}
\setlength{\unitlength}{0.7mm}
\begin{picture}(60,60)(-30,-30)
\mycl
\put(-14.142,-14.1420){\vector(1,1){14.142}}
\put(0,0){\vector(1,1){14.1420}}
\put(20,0){\vector(-1,0){20}}
\put(0,0){\vector(0,1){20}}

\put(-5,0){\makebox(0,0)[cb]{$\tilde y_0$}}
\put(-8,-8){\makebox(0,0)[cc]{$\tilde u_k$}}
\put(7.1,7.1){\makebox(0,0)[cc]{$\tilde u_{k+1}$}}
\put(10,-2){\makebox(0,0)[ct]{$\tilde v_l$}}
\put(-1,10){\makebox(0,0)[rc]{$\tilde v_{l+1}$}}

\put(21.2132,21.2132){\makebox(0,0)[cc]{$\bullet$}}
\put(25,25){\makebox(0,0)[cc]{$T(u_{k+1})$}}
\put(00,30){\makebox(0,0)[cc]{$\bullet$}}
\put(00,28){\makebox(0,0)[ct]{$T(v_{l+1})$}}
\put(30,0){\makebox(0,0)[cc]{$\bullet$}}
\put(32,0){\makebox(0,0)[lc]{$-T(v_{l})$}}
\put(-21.2132,-21.2132){\makebox(0,0)[cc]{$\bullet$}}
\put(-27,-25){\makebox(0,0)[cc]{$-T(u_k)$}}

\put(5,-15){\makebox(0,0)[cc]{$I(\cdot, \cdot;
\ C)=1$}}
\end{picture}
\hspace{4cm}
\begin{picture}(60,60)(-30,-30)
\mycl
\put(-14.142,-14.1420){\vector(1,1){14.142}}
\put(0,0){\vector(1,1){14.1420}}
\put(-20,0){\vector(1,0){20}}
\put(0,0){\vector(0,1){20}}

\put(5,-3){\makebox(0,0)[cb]{$\tilde y_0$}}
\put(-8,-8){\makebox(0,0)[cc]{$\tilde u_k$}}
\put(13,7.1){\makebox(0,0)[cc]{$\tilde u_{k+1}$}}
\put(-10,2){\makebox(0,0)[cb]{$\tilde v_l$}}
\put(-1,10){\makebox(0,0)[rc]{$\tilde v_{l+1}$}}

\put(21.2132,21.2132){\makebox(0,0)[cc]{$\bullet$}}
\put(25,25){\makebox(0,0)[cc]{$T(u_{k+1})$}}
\put(00,30){\makebox(0,0)[cc]{$\bullet$}}
\put(00,28){\makebox(0,0)[ct]{$T(v_{l+1})$}}
\put(-30,0){\makebox(0,0)[cc]{$\bullet$}}
\put(-32,0){\makebox(0,0)[rc]{$-T(v_{l})$}}
\put(-21.2132,-21.2132){\makebox(0,0)[cc]{$\bullet$}}
\put(-27,-25){\makebox(0,0)[cc]{$-T(u_k)$}}

\put(5,-15){\makebox(0,0)[cc]{$I(\cdot, \cdot;\ C)=0$}}
\end{picture}
\end{center}

\begin{center}
\setlength{\unitlength}{0.7mm}
\begin{picture}(60,60)(-30,-30)
\mycl
\put(-14.142,-14.1420){\vector(1,1){14.142}}
\put(0,0){\vector(1,0){20}}
\put(-20,0){\vector(1,0){20}}
\put(0,0){\vector(1,-1){14.142}}

\put(0,3){\makebox(0,0)[cb]{$\tilde y_0$}}
\put(-8,-8){\makebox(0,0)[cc]{$\tilde u_k$}}
\put(10,2){\makebox(0,0)[cb]{$\tilde u_{k+1}$}}
\put(7.1,-7.1){\makebox(0,0)[cc]{$\tilde v_{l+1}$}}
\put(-10,2){\makebox(0,0)[cb]{$\tilde v_l$}}

\put(30,0){\makebox(0,0)[cc]{$\bullet$}}
\put(32,0){\makebox(0,0)[lc]{$T(u_{k+1})$}}
\put(21.2132,-21.2132){\makebox(0,0)[cc]{$\bullet$}}
\put(27,-25){\makebox(0,0)[cc]{$T(v_{l+1})$}}
\put(-30,0){\makebox(0,0)[cc]{$\bullet$}}
\put(-32,0){\makebox(0,0)[rc]{$-T(v_{l})$}}
\put(-21.2132,-21.2132){\makebox(0,0)[cc]{$\bullet$}}
\put(-27,-25){\makebox(0,0)[cc]{$-T(u_k)$}}

\put(0,17){\makebox(0,0)[cc]{$I(\cdot, \cdot; \ C)=-1$}}
\end{picture}
\end{center}
The proof for the components of type (2) and (3) are similar. The following figure shows one case.
\begin{center}
\setlength{\unitlength}{0.7mm}
\begin{picture}(120,60)(-30,-30)
\qbezier(2.8,29.87)(1.23,30.02)(-0.36,30.)
\qbezier(-0.36,30.)(-1.94,29.98)(-3.51,29.79)
\qbezier(-3.51,29.79)(-5.09,29.61)(-6.63,29.26)
\qbezier(-6.63,29.26)(-8.18,28.91)(-9.68,28.4)
\qbezier(-9.68,28.4)(-11.18,27.89)(-12.61,27.22)
\qbezier(-12.61,27.22)(-14.05,26.55)(-15.41,25.74)
\qbezier(-15.41,25.74)(-16.77,24.93)(-18.04,23.97)
\qbezier(-18.04,23.97)(-19.3,23.02)(-20.46,21.94)
\qbezier(-20.46,21.94)(-21.62,20.86)(-22.66,19.66)
\qbezier(-22.66,19.66)(-23.7,18.47)(-24.6,17.17)
\qbezier(-24.6,17.17)(-25.51,15.87)(-26.27,14.48)
\qbezier(-26.27,14.48)(-27.04,13.09)(-27.65,11.63)
\qbezier(-27.65,11.63)(-28.27,10.17)(-28.72,8.66)
\qbezier(-28.72,8.66)(-29.18,7.14)(-29.48,5.58)
\qbezier(-29.48,5.58)(-29.77,4.03)(-29.9,2.45)
\qbezier(-29.9,2.45)(-30.03,0.87)(-29.99,-0.72)
\qbezier(-29.99,-0.72)(-29.95,-2.3)(-29.75,-3.87)
\qbezier(-29.75,-3.87)(-29.54,-5.44)(-29.18,-6.98)
\qbezier(-29.18,-6.98)(-28.81,-8.52)(-28.28,-10.02)
\qbezier(-28.28,-10.02)(-27.75,-11.51)(-27.07,-12.94)
\qbezier(-27.07,-12.94)(-26.38,-14.37)(-25.55,-15.72)
\qbezier(-25.55,-15.72)(-24.72,-17.07)(-23.76,-18.32)
\qbezier(-23.76,-18.32)(-22.79,-19.58)(-21.69,-20.72)
\qbezier(-21.69,-20.72)(-20.6,-21.87)(-19.39,-22.89)
\qbezier(-19.39,-22.89)(-18.18,-23.91)(-16.87,-24.81)
\qbezier(-16.87,-24.81)(-15.56,-25.7)(-14.17,-26.44)
\qbezier(-14.17,-26.44)(-12.77,-27.19)(-11.3,-27.79)
\qbezier(-11.3,-27.79)(-9.84,-28.39)(-8.31,-28.83)
\qbezier(-8.31,-28.83)(-6.79,-29.26)(-5.23,-29.54)
\qbezier(-5.23,-29.54)(-3.67,-29.82)(-2.09,-29.93)
\qbezier(-2.09,-29.93)(-0.51,-30.04)(1.07,-29.98)
\qbezier(60.,-30.)(61.58,-30.)(63.16,-29.83)
\qbezier(63.16,-29.83)(64.73,-29.67)(66.28,-29.33)
\qbezier(66.28,-29.33)(67.83,-29.)(69.34,-28.51)
\qbezier(69.34,-28.51)(70.84,-28.02)(72.29,-27.37)
\qbezier(72.29,-27.37)(73.73,-26.72)(75.1,-25.92)
\qbezier(75.1,-25.92)(76.47,-25.12)(77.75,-24.19)
\qbezier(77.75,-24.19)(79.03,-23.25)(80.2,-22.18)
\qbezier(80.2,-22.18)(81.37,-21.12)(82.42,-19.93)
\qbezier(82.42,-19.93)(83.47,-18.75)(84.4,-17.46)
\qbezier(84.4,-17.46)(85.32,-16.17)(86.1,-14.79)
\qbezier(86.1,-14.79)(86.88,-13.41)(87.51,-11.96)
\qbezier(87.51,-11.96)(88.14,-10.51)(88.62,-9.)
\qbezier(88.62,-9.)(89.09,-7.49)(89.41,-5.93)
\qbezier(89.41,-5.93)(89.72,-4.38)(89.87,-2.8)
\qbezier(89.87,-2.8)(90.02,-1.23)(90.,0.36)
\qbezier(90.,0.36)(89.98,1.94)(89.79,3.51)
\qbezier(89.79,3.51)(89.61,5.09)(89.26,6.63)
\qbezier(89.26,6.63)(88.91,8.18)(88.4,9.68)
\qbezier(88.4,9.68)(87.89,11.18)(87.22,12.61)
\qbezier(87.22,12.61)(86.55,14.05)(85.74,15.41)
\qbezier(85.74,15.41)(84.93,16.77)(83.97,18.04)
\qbezier(83.97,18.04)(83.02,19.3)(81.94,20.46)
\qbezier(81.94,20.46)(80.86,21.62)(79.66,22.66)
\qbezier(79.66,22.66)(78.47,23.7)(77.17,24.6)
\qbezier(77.17,24.6)(75.87,25.51)(74.48,26.27)
\qbezier(74.48,26.27)(73.09,27.04)(71.63,27.65)
\qbezier(71.63,27.65)(70.17,28.27)(68.66,28.72)
\qbezier(68.66,28.72)(67.14,29.18)(65.58,29.48)
\qbezier(65.58,29.48)(64.03,29.77)(62.45,29.9)
\qbezier(62.45,29.9)(61.23,30.)(60.,30.)

\put(-14.142, 14.1420){\vector(1,-1){14.142}}
\put(0,0){\vector(-1,-1){14.142}}
\put(80,0){\vector(-1,0){20}}
\put(60,0){\vector(1,-1){14.142}}

\put(45,2){\makebox(0,0)[cb]{$\tilde u_{k+q}$}}
\put(45,-2){\makebox(0,0)[ct]{($\tilde v_{l+1}^{-1}$)}}
\put(40,0){\vector(1,0){20}}

\put(17,2){\makebox(0,0)[cb]{$\tilde u_{k+1}$}}
\put(17,-2){\makebox(0,0)[ct]{($\tilde v_{l+q}^{-1}$)}}
\put(0,0){\vector(1,0){20}}

\put(30,0){\makebox(0,0)[cc]{$\cdots\cdots$}}

%\put(-5,0){\makebox(0,0)[cb]{$\tilde y_0$}}
\put(70,-18){\makebox(0,0)[cc]{$\tilde u_{k+q+1}$}}
\put(81.2132,-21.2132){\makebox(0,0)[cc]{$\bullet$}}
\put(84,-22){\makebox(0,0)[lc]{$T(u_{k+q+1})$}}

\put(70,2){\makebox(0,0)[cb]{$\tilde v_{l}$}}
\put(90,0){\makebox(0,0)[cc]{$\bullet$}}
\put(93,0){\makebox(0,0)[lc]{$-T(v_{l})$}}

\put(-9,-9){\makebox(0,0)[cb]{$\tilde v_{l+q+1}$}}
\put(-21.2132,-21.2132){\makebox(0,0)[cc]{$\bullet$}}
\put(-27,-25){\makebox(0,0)[cc]{$T(v_{l+q+1})$}}

\put(-1,10){\makebox(0,0)[rc]{$\tilde u_{k}$}}
\put(-21.2132,21.2132){\makebox(0,0)[cc]{$\bullet$}}
\put(-25,21){\makebox(0,0)[rc]{$-T(u_{k})$}}

\put(30,-25){\makebox(0,0)[cc]{$I(\cdot, \cdot; C)=-1$}}
\end{picture}
\end{center}

For the component of type (4), corresponding intersection set is the whole loop. Thus, $\myim(\varphi)=\myim(\psi)$. We can push $\varphi$ a little along its  normal direction into $\varphi'$.  We obtain that $\myim(\varphi')\cap\myim(\psi) = \emptyset$, especially the component in consideration is moved out. The homotopy invariance of index implies that such component has index $0$. The proof of type (5) is the same.
\end{proof}

By Theorem~\ref{thm-connect-class}, each common value class of $\varphi$ and $\psi$ is just a component of common value pair if $\varphi$ and $\psi$ are geodesic loops determined by cyclically $D$-reduced words. Our theorem~\ref{thm-index} shows that the number of essential common value classes is computable. Moreover, such a computation is really symbolic one, because we have

\begin{Proposition}
Let $w_1, w_2, w_3$ be three letters in $\{c_1, c^{-1}_1,\ldots c_{2g}, c^{-1}_{2g}\}$, and let $\epsilon_1, \epsilon_2, \epsilon_3 =\pm 1$. If $w_1^{\epsilon_1}, w_2^{\epsilon_2}, w_3^{\epsilon_3}$ are distinct, then
$$\Theta(\epsilon_1 T(w_1), \epsilon_2 T(w_2), \epsilon_3 T(w_3)) = \mathrm{sgn}(\vartheta_g(w_1^{\epsilon_1}), \vartheta_g(w_2^{\epsilon_2}), \vartheta_g(w_3^{\epsilon_3})),$$
 where $\vartheta_g: \{c_1, c^{-1}_1,\ldots c_{2g}, c^{-1}_{2g}\}$ is a one-to-one correspondence given by $$c_j\mapsto j-1+(1-(-1)^j)g,\ \ c_j^{-1}\mapsto j-1+(1+(-1)^j)g.$$
\end{Proposition}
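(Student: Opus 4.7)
The plan is to identify the map $T$ on letters with the map $\vartheta_g$ up to a single scaling of angle, so that the geometric cyclic-order detector $\Theta$ on the boundary circle becomes the combinatorial cyclic-order detector $\mathrm{sgn}$ on $\mathbb{Z}_{4g}$.

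First I would absorb the signs $\epsilon_i$ into the letters. Directly from (\ref{eq-T}) we have $T(c_j^{-1})=-T(c_j)$, so $-T(w)=T(w^{-1})$ for every letter $w\in\{c_j^\pm\}$, and therefore $\epsilon_i T(w_i)=T(w_i^{\epsilon_i})$. Writing $u_i:=w_i^{\epsilon_i}$, the asserted identity reduces to showing that for any three pairwise distinct letters $u_1,u_2,u_3$,
$$\Theta(T(u_1),T(u_2),T(u_3))=\mathrm{sgn}(\vartheta_g(u_1),\vartheta_g(u_2),\vartheta_g(u_3)).$$

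Second, I would prove the angular identity
$$T(u)=\exp\!\Bigl(\tfrac{\pi i}{2g}\,\vartheta_g(u)\Bigr)\qquad\text{for every letter }u\in\{c_j^\pm\}.$$
This is a short parity check, split according to whether $u=c_j$ or $u=c_j^{-1}$: the factor $(-1)^j$ appearing in $T(c_j)=(-1)^j e^{(j-1)\pi i/(2g)}$ contributes a phase of $\pi$ exactly when $j$ is odd, and the inverse case swaps this parity. In both cases the total argument simplifies to $(j-1)\pi/(2g)+(1\mp(-1)^j)\pi/2$, which matches the definition of $\vartheta_g$ term by term. This shows that $T$ carries $\{c_j^\pm\}$ bijectively onto the set of $4g$-th roots of unity, placing them in exactly the cyclic order recorded by $\vartheta_g$.

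Finally, I would conclude by a general principle. The value of $\Theta(P,Q,R)$ at three distinct points of the oriented circle depends only on their cyclic order; likewise $\mathrm{sgn}(a,b,c)$ at three distinct elements of $\mathbb{Z}_{4g}$ depends only on their cyclic order. Since the bijection $u\mapsto T(u)$ intertwines these two cyclic orders by the previous step, the two signs agree whenever the three letters $u_i$ are distinct, which is exactly the hypothesis. The only non-routine step is the parity bookkeeping in the angular identity; this is essentially a verification, since $\vartheta_g$ has been designed to encode the cyclic positions of the $T$-values, so no genuine obstacle arises.
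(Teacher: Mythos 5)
Your proof is correct: the identity $T(u)=\exp\bigl(\tfrac{\pi i}{2g}\vartheta_g(u)\bigr)$ checks out in all four parity cases, and since $T$ and $\vartheta_g$ are both injective the cyclic-order comparison goes through. The paper states this proposition without proof, and your argument is exactly the intended verification that $\vartheta_g$ linearizes the angular positions of the fixed points $T(c_j^{\pm})$, so nothing further is needed.
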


Thus, we can compute  locally the indices of all common value classes, instead of  comparing of translation axis used in \cite[Sec. 6]{Reinhart1962}, which was a very hard job if a word is large.

Using the data of components of the set of common value pairs, we obtain immediately

\begin{Proposition}\label{prop-s-time}
Let $\varphi,\psi, \varphi'$ and $\psi'$ be loops which determined by cyclically $D$-reduced words $u_1\cdots u_m$, $v_1\cdots v_n$, $(u_1\cdots u_m)^s$ and  $(v_1\cdots v_n)^t$, respectively, where $s$ and $t$ are positive integers. Then the number of essential common value classes of $\varphi'$ and $\psi'$ is $s\times t$ times of the number of essential common value classes of $\varphi$ and $\psi$.
\end{Proposition}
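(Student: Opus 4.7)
The plan is to establish a canonical $st$-to-$1$ surjection from components of $\cvp(\varphi',\psi')$ onto components of $\cvp(\varphi,\psi)$ that preserves the local index, and then to observe that only the non-zero-index (``essential'') components are being counted on each side.

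First I would apply Theorem~\ref{thm-connect-class} to both pairs. Since all four defining words are cyclically $D$-reduced, each connected component of either common value set is exactly one common value class, so the statement reduces to comparing numbers of essential components. Next I would invoke Theorem~\ref{th:cvp-component} and its Corollary to parametrise components. Components of $\cvp(\varphi,\psi)$ are labelled by triples $(k,l,q)_{\mu,\nu}$ with $k\in\{0,\ldots,m-1\}$, $l\in\{0,\ldots,n-1\}$, whereas components of $\cvp(\varphi',\psi')$ are labelled by $(k',l',q')$ with $k'\in\{0,\ldots,sm-1\}$, $l'\in\{0,\ldots,tn-1\}$. Because the defining word of $\varphi'$ is the $s$-fold concatenation $(u_1\cdots u_m)^s$, its $k'$-th letter equals $u_{(k' \bmod m)+1}$, and analogously for $\psi'$; hence the letter-comparison conditions of the Corollary of Theorem~\ref{th:cvp-component} depend only on the residues $k'\bmod m$ and $l'\bmod n$.

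Using this periodicity I would define
\[
\pi(k',l',q')\ =\ (k'\bmod m,\ l'\bmod n,\ q'),
\]
and check that $\pi$ sends components of $\cvp(\varphi',\psi')$ of types (1)--(3) to components of $\cvp(\varphi,\psi)$ of the same type, and is exactly $st$-to-$1$ onto them: the $st$ preimages of $(k_0,l_0,q)$ are $(k_0+am,\ l_0+bn,\ q)$ for $0\le a<s$ and $0\le b<t$, all of which satisfy the defining constraints by periodicity. Then I would invoke Theorem~\ref{thm-index} to conclude that $\pi$ preserves the intersection index, since the index formula depends only on the letters $T(u_{k+\cdot})$ and $T(v_{l+\cdot})$ adjacent to the component, which are $m$- and $n$-periodic.

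Finally, components of types (4) and (5) have index $0$ by Theorem~\ref{thm-index} on both sides and contribute nothing to the essential count, so they can be discarded from the bookkeeping; a type (4)/(5) component appears in $\cvp(\varphi',\psi')$ only if the corresponding infinite-length matching condition already holds in $\cvp(\varphi,\psi)$, so no spurious essential components can be created by passing to the powers. Combining the $st$-to-$1$ bijection between types (1)--(3) with the index preservation yields the claimed multiplicative formula. The main obstacle I expect is the bookkeeping in the special case where $\varphi$ and $\psi$ traverse the same underlying geodesic (so types (4)/(5) can occur): here one must check carefully that all ``new'' components in the lifted pair still fall into types (4)/(5) with index $0$, so that they remain invisible to the essential count on both sides.
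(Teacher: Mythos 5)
Your argument is correct, and it fleshes out exactly what the paper intends: the paper states this proposition without proof, asserting it follows ``immediately'' from the data of components, and your periodicity argument (letters of $(u_1\cdots u_m)^s$ depend only on the index mod $m$, so the type and index data of each component are preserved under the $st$-to-$1$ reduction map, while types (4)/(5) are index-zero on both sides) is precisely the elaboration of that claim. No gaps; the identification of classes with components via Theorem~\ref{thm-connect-class} and the residue-dependence of the index formula in Theorem~\ref{thm-index} are the right ingredients.
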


\begin{proof}
By Theorem~\ref{thm-index}, the components of type (4) and (5) in Theorem~\ref{th:cvp-component} have indices zero. Thus, all essential common value classes are components of the type (1), (2) or (3). It is obvious that each component of common value pairs of $\varphi$ and $\psi$ in one of these three forms gives $s\times t$ components of common value pairs of $\varphi'$ and $\psi'$. Hence, we are done.
\end{proof}

The number of essential common value classes gives a lower bound of the number of geometric intersections, see \cite[Theorem 4.10]{guying}. But, there are something different in the case of self-intersection. Next two Lemmas give some special properties of self-common value classes.

Recall from \cite{guying} that a self-common value class of $\varphi$ is said to be {\em symmetric} if it contains both of $(x',x'')$ and $(x'',x')$.

\begin{Lemma}\label{Lem-symmetric}
Let $\varphi$ be a piecewise-geodesic loop determined by a cyclically $D$-reduced word. Then each symmetric self-common value class of $\varphi$ is not essential, i.e. has index zero.
\end{Lemma}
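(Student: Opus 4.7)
The plan is to exploit the $\mathbb{Z}/2$-symmetry given by swapping the two factors of $S^1\times S^1$ and of $F_g\times F_g$. Write $\sigma\colon S^1\times S^1\to S^1\times S^1$, $(u,v)\mapsto(v,u)$, and $\Sigma\colon F_g\times F_g\to F_g\times F_g$, $(x,y)\mapsto(y,x)$. These satisfy the intertwining identity $(\varphi\times\varphi)\circ\sigma=\Sigma\circ(\varphi\times\varphi)$, and a self-common value class $C$ is symmetric precisely when $\sigma(C)=C$. The strategy is to use naturality of the index homomorphism $\mathcal{L}_*$ to deduce $I(\varphi,\varphi;C)=-I(\varphi,\varphi;C)$.

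First I would record the orientation behaviour of $\sigma$ and $\Sigma$. The swap $\sigma$ is the $2\times 2$ transposition on $T(S^1\times S^1)$, has determinant $-1$, hence reverses orientation; in particular $\sigma_*[S^1\times S^1]=-[S^1\times S^1]$. The swap $\Sigma$ fixes $\Delta$ pointwise, and at any diagonal point its derivative is the identity on $T\Delta$ (the diagonal subspace) and $-\mathrm{id}$ on the $2$-dimensional normal bundle (the anti-diagonal subspace). Since $-\mathrm{id}$ on a rank-$2$ bundle preserves orientation, $\Sigma$ preserves the orientations of $\Delta$, of its normal bundle, and hence of $F_g\times F_g$; consequently $\Sigma_*[\tau_\Delta]=[\tau_\Delta]$ in $H_2(F_g\times F_g,F_g\times F_g-\Delta)$.

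Next I would pick a $\sigma$-invariant isolating neighbourhood $N$ of $C$ (replacing $N$ by $N\cap\sigma(N)$ if necessary) and apply naturality. The composition
\[
H_2(S^1\times S^1)\xrightarrow{j_*}H_2(S^1\times S^1,S^1\times S^1-C)\xrightarrow{e_*^{-1}}H_2(N,N-C)\xrightarrow{(\varphi\times\varphi)_*}H_2(F_g\times F_g,F_g\times F_g-\Delta)
\]
then intertwines $\sigma_*$ on the source with $\Sigma_*$ on the target. By Proposition~\ref{Prop-essential-intersectionnumber} it sends $[S^1\times S^1]$ to $I(\varphi,\varphi;C)[\tau_\Delta]$. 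Evaluating the same composition on $\sigma_*[S^1\times S^1]=-[S^1\times S^1]$ yields $-I(\varphi,\varphi;C)[\tau_\Delta]$ directly, and also $\Sigma_*\bigl(I(\varphi,\varphi;C)[\tau_\Delta]\bigr)=I(\varphi,\varphi;C)[\tau_\Delta]$ via naturality together with the preservation of $[\tau_\Delta]$. Equating the two gives $I(\varphi,\varphi;C)=0$.

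The main delicate point is the orientation bookkeeping for $\Sigma$: the whole argument turns on the simultaneous preservation of the orientation of $\Delta$, of its normal bundle, and of the ambient manifold, and this is precisely where the $2$-dimensionality of $F_g$ enters (in odd dimension the normal bundle would pick up a sign and the conclusion would collapse to a tautology). As a sanity check, one can verify the conclusion directly from the formulas of Theorem~\ref{thm-index}: a symmetric type~(3) class, for instance, forces $l=k$ together with the inverse-palindrome relation $u_{k+r}=u_{k+q-r}^{-1}$, under which the two $\Theta$-terms in the index formula differ by a cyclic permutation composed with a transposition and so cancel. The homological argument above, however, treats the singleton, arc, and circle cases uniformly.
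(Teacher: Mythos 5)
Your proof is correct, but it is a genuinely different argument from the one in the paper. The paper proceeds by case analysis on the five component types of Theorem~\ref{th:cvp-component}: it uses the combinatorics of cyclically $D$-reduced words to show that a symmetric class cannot be of type (2) or (3) at all (the type-(3) case would force either $u_j=u_j^{-1}$ or an adjacent cancellation $u_ju_{j+1}^{-1}$, contradicting $D$-reducedness), and then observes that the remaining possible types have index zero by Theorem~\ref{thm-index}. You instead prove the vanishing directly by equivariance of the index homomorphism under the factor swap, using $\sigma_*[S^1\times S^1]=-[S^1\times S^1]$ against $\Sigma_*[\tau_\Delta]=[\tau_\Delta]$; the orientation bookkeeping you flag (the antipodal map on the rank-$2$ normal bundle of $\Delta$ preserves orientation) is exactly right, and the identification ``symmetric $\Leftrightarrow$ $\sigma(C)=C$'' follows from Proposition~\ref{prop-GZ}, since $\sigma$ carries the class determined by $\alpha$ to the class determined by $\alpha^{-1}$ and two classes that meet coincide. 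Your route is cleaner and more general: it never invokes the index table or the word combinatorics, and it applies to any symmetric isolated self-common value class; the hypothesis that the word is cyclically $D$-reduced enters only through Theorems~\ref{th:cvp-component} and~\ref{thm-connect-class}, which guarantee that $C$ is a compact isolated component so that the index of Proposition~\ref{Prop-essential-intersectionnumber} is defined (you should say this explicitly). What the paper's argument buys in exchange is the stronger combinatorial fact that symmetric classes of types (2) and (3) simply do not occur for cyclically $D$-reduced words, though that extra information is not used elsewhere, so your proof would serve the application in Theorem~\ref{thm-minimal-self} equally well.
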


\begin{proof}
Let $\mu=u_1\cdots u_k$ be a cyclically $D$-reduced word determining $\varphi$. If $\mu$ is trivial, i.e. $m=0$, then there is not any essential self-common value class. Thus, our conclusion is obvious.

Now, we consider the general case: $\mu$ is non-trivial. Let $C$ be a symmetric self-common value class of $\varphi$. Since $\varphi$ is piecewise-geodesic, by Theorem~\ref{th:cvp-component}, we assume that $C$ contains $(e^{\frac{2\pi k_0i}{m}}, e^{\frac{2\pi l_0i}{m}})$ and $(e^{\frac{2\pi l_0i}{m}}, e^{\frac{2\pi k_0i}{m}})$ for integers $k_0, l_0$.

If $k_0 = l_0$, then class $C$ is obvious the whole diagonal $\{(e^{\theta i}, e^{\theta i})\}$ of $S^1\times S^1$, and hence is of type (4) in Theorem~\ref{th:cvp-component}. From Theorem~\ref{thm-index}, we know that $C$ has index $0$, and therefore is an inessential class.

If $k_0\ne l_0$, we may assume that $0\le k_0<l_0<m$. Since $C$ is symmetric, by Theorem~\ref{thm-connect-class}, two pair $(e^{\frac{2\pi k_0i}{m}}, e^{\frac{2\pi l_0i}{m}})$ and $(e^{\frac{2\pi l_0i}{m}}, e^{\frac{2\pi k_0i}{m}})$ lies in the same component of $\cvp(\varphi, \varphi)$. The class $C$ has five possibilities: type (1)-(5), which are listed in Theorem~\ref{th:cvp-component}. Note that the two self-common value pairs mentioned above are distinct. Type (1) is impossible. Since the components of type (4) and (5) have index zero (see Theorem~\ref{thm-index}),  it is sufficient to show that type (2) and (3) are both impossible.

Suppose on the contrary that the class $C$ is a component of self-common value set of type (2). By Theorem~\ref{th:cvp-component} and its Corollary, $C =\{( e^{\frac{2\pi (k+\lambda) i}{m}}, e^{\frac{2\pi (l+\lambda) i}{m}})\mid 0 \le \lambda \le q\}$ for some integers $k,l$ and positive integer $q$. Moreover, $u_{k+r}= u_{l+r}$ for $r=1, 2,\ldots, q$. Thus, $k_0=k+r'$ and $l_0=l+r'$ for some $r'$ with $0\le r'\le q$ because $(e^{\frac{2\pi k_0i}{m}}, e^{\frac{2\pi l_0i}{m}})\in C$. Moreover, $l_0=k+r''$ and $k_0=l+r''$ for some $r''$ with $0\le r''\le q$ because $(e^{\frac{2\pi l_0i}{m}}, e^{\frac{2\pi k_0i}{m}})\in C$. It follows that $2(r''-r') \equiv (l_0-k_0)+(k_0-l_0)\equiv 0 \mod m$. Since $e^{\frac{2\pi k_0i}{m}}$ and $e^{\frac{2\pi l_0i}{m}}$ are distinct and since $0\le k_0<l_0<m$, we have that $m$ is even and $r''-r'=\frac{m}{2}$. It follows that $l_0 = k_0 + \frac{m}{2}$ and hance $q\ge \frac{m}{2}$. We obtain that $u_{k_0+r} = u_{k_0 + \frac{m}{2}+r}$ for $r=0, 1, \ldots, \frac{m}{2}-1$. By the corollary of Theorem~\ref{th:cvp-component}, $C$ would be of type (4). A contradiction.

Suppose on the contrary that the class $C$ is a component of self-common value set of type (3). By Theorem~\ref{th:cvp-component} and its Corollary, $C =\{( e^{\frac{2\pi (k+\lambda) i}{m}}, e^{\frac{2\pi (l+q-\lambda) i}{m}})\mid 0\le \lambda \le q\}$ for some integers $k,l$ and positive integer $q$. Moreover, $u_{k+r}= u_{l+q+1-r}^{-1}$ for $r=1, \ldots, q$, i.e.
$$u_{k+1} = u_{l+q}^{-1}, \ldots, u_{k_0}=u_{l_0}^{-1}, u_{k_0+1}=u_{l_0-1}^{-1},\ldots
 u_{l_0}=u_{k_0}^{-1}, \dots, u_{k+q} = u_{l+1}^{-1}. $$
If $k_0+l_0$ is even, we would obtain that $u_{\frac{k_0+l_0}{2}} = u_{\frac{k_0+l_0}{2}}^{-1}$. This is impossible. If $k_0+l_0$ is odd, we would obtain that $u_{\frac{k_0+l_0-1}{2}} = u_{\frac{k_0+l_0+1}{2}}^{-1}$, which contradicts to the fact that $\mu$ is $D$-reduced.
\end{proof}

By this Lemma, if $(x', x'')$ lies in an essential self-common value class, then $(x'', x')$ must lie in distinct essential class. Thus, the number of essential self-common value classes of any loop on $F_g$ is even. By \cite[Theorem 5.6]{guying}, the half of this number is a lower bound of minimal geometric self-intersection number. (Note that $(x', x'')$ and $(x'', x')$ give the same intersection.) Next Lemma shows that there is still more self-intersection lying in inessential self-common value classes if corresponding element in $\pi_1(F_g, y_0)$ is not prime.

\begin{Lemma}\label{Lem-inessential-non-empty}
Let $\varphi$ be a loop determined by a non-trivial element $\mu^q\in  \pi_1(F_g, y_0)$ with $q>1$. Then  for any $s$, the self-common value class $C_s$ of $\varphi$ determined by $\mu^s$ has zero index, but for any loop $\xi$ homotopic to $\varphi$, the self-common value classes of $\xi$ homotopy determined by $\mu^s$, $s=1,2,\ldots, q-1$ contains at least $q-1$ self-intersections.
\end{Lemma}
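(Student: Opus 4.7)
The plan splits according to the two assertions.

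For the zero-index claim, write $\varphi=\eta^q$ where $\eta$ is the piecewise-geodesic representing $\mu$. Then the standard lift satisfies the strong periodicity $\tilde\varphi_S(\lambda+s/q)=\mu^s\tilde\varphi_S(\lambda)$ for every $s$, so $\mu^s\tilde\varphi_S$ is merely $\tilde\varphi_S$ reparametrised by $\lambda\mapsto\lambda-s/q$ and has the same image. Hence $\cvp(\tilde\varphi_S,\mu^s\tilde\varphi_S)$ contains the whole line $\{(\lambda+s/q,\lambda):\lambda\in\mathbb{R}\}$, which projects to $S^1\times S^1$ as a component of type (4) or (5) in Theorem~\ref{th:cvp-component}. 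By Theorem~\ref{thm-index} such a component has index zero, so $C_s$ is inessential.

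For the second claim I pass to the intermediate open annular cover $B=\mathbb H^2/\langle\mu^q\rangle$, on which $\mu^s$ descends (since it commutes with $\mu^q$) to a homeomorphism $\bar\mu^s$ of finite order $r=q/\gcd(s,q)\geq 2$ whenever $s\not\equiv 0\pmod q$. Both $\tilde\xi$ and $\mu^s\tilde\xi$ are $\mu^q$-equivariant and thus descend to closed loops $\bar\xi,\bar\mu^s\bar\xi$ in $B$, each realising the generator of $\pi_1(B)\cong\mathbb Z$; by Proposition~\ref{prop-GZ} the self-common value pairs of $\xi$ in $C_s$ are in bijection with intersection points of $\bar\xi$ with $\bar\mu^s\bar\xi$ in $B$, so it suffices to exhibit at least two such intersection points. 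If the two images coincide in $B$ (the degenerate case $\xi=\varphi$), infinitely many pairs arise. Otherwise, the intersection form of the annulus vanishes, so the algebraic intersection of two degree-one loops is zero and the geometric count must be even; to rule out zero I use a nesting argument: if $\bar\xi$ is embedded and disjoint from $\bar\mu^s\bar\xi$, it separates $B$ into two annular components $L,R$, with $\bar\mu^s\bar\xi\subset L$ say, and iterating $\bar\mu^s$ gives the strict chain $L\supsetneq\bar\mu^s(L)\supsetneq\cdots\supsetneq\bar\mu^{rs}(L)=L$, which is absurd. Hence the geometric intersection is at least $2$, producing two distinct self-common value pairs in $C_s$ which project generically to two distinct self-intersections of $\xi$ in $F_g$.

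The main obstacle I foresee is extending the nesting argument to a non-embedded $\bar\xi$. I plan to handle this by combining a transverse perturbation of $\xi$ (which, by the homotopy-invariance established in Section~5, cannot decrease the count of essential pairs in $C_s$ and can only add inessential ones) with an intermediate-value argument for the height function $H(\lambda)=r(\lambda+s/q)-r(\lambda)$ read from a product description $B\cong S^1\times\mathbb R$: periodicity of $r$ forces $\int_{S^1}H=0$, so either $H\equiv 0$ (the degenerate case) or $H$ has at least two zeros, each one corresponding to an intersection of $\bar\xi$ with $\bar\mu^s\bar\xi$ in $B$. The case $s\equiv 0\pmod q$ is handled separately through the diagonal contribution in $C_0$.
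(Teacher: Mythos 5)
Your proof follows the same two-step skeleton as the paper's: the index-zero claim is read off from Theorem~\ref{thm-index} because the class determined by $\mu^s$ is a component of type (4) or (5), and the ``at least two pairs'' claim comes down to showing that the lift $\tilde\xi$ and its translate $\mu^s\tilde\xi$, which are periodic arcs sharing the two fixed points of $\mu$ at infinity, must cross at least twice per period. Your repackaging through the annular cover $B=\mathbb H^2/\langle\mu^q\rangle$ is a genuine (and clarifying) addition: it makes ``per period'' precise, the identification of $C_s$ with $\bar\xi\cap\bar\mu^s\bar\xi$ is legitimate because $C_{s+qk}=C_s$ as classes, and the finite order of $\bar\mu^s$ together with the nesting contradiction cleanly rules out disjointness in the embedded case, which the paper does not spell out. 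The zero algebraic intersection / parity observation is also a nice way to upgrade ``at least one'' to ``at least two'' at transverse crossings.

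Two cautions. First, your fallback for non-embedded $\bar\xi$ via ``a transverse perturbation \dots which, by homotopy-invariance, cannot decrease the count of essential pairs in $C_s$'' is circular: $C_s$ has index zero, so homotopy invariance of the index places no lower bound whatsoever on the number of pairs it contains --- that an inessential class nevertheless cannot be emptied is precisely the content being proved, so you cannot perturb $\xi$ and transfer the count back. Drop that route and rely on the intermediate-value argument, which is what the paper does. Second, your height function $H(\lambda)=r(\lambda+s/q)-r(\lambda)$ presumes the parametrized identity $\bar\mu^s\bar\xi(\lambda)=\bar\xi(\lambda+s/q)$; for an arbitrary $\xi$ homotopic to $\varphi$ this holds only at the level of images ($\bar\mu^s\bar\xi$ and $\bar\xi$ have translated images, not translated parametrizations), and $\bar\xi$ need not be a graph over the core circle, so the argument should be run on the sets (e.g.\ comparing the maxima and minima of the distance-to-axis function on the two arcs and using that each arc separates the two ends of $B$). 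The paper's own write-up makes the same slip and is equally terse at this point, so this is a shared imprecision rather than a defect peculiar to your proof; but since you are supplying the details, this is the step to make honest.
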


\begin{proof}
Without loss of generality, we may assume that $\varphi$ is a piecewise-geodesic loop determined by $\mu^q$, and that $\mu$ is cyclically $D$-reduced. Then the index $0$ is proved in Theorem~\ref{thm-index}.

Consider the standard lifting  $\tilde\varphi_S$ of $\varphi$. Let $\mu=u_1\cdots u_m$, and by homotopy invariance we may assume that $\varphi$ is piecewise-geodesic. By definition of standard liftings (see Definition~\ref{def-standard-lifting}), we have that $\tilde \varphi_S(\frac{s}{q})=\tilde \varphi_S(\frac{ms}{mq}) = \mu^s(\tilde y_0)$. Thus, the set $\cvp(\tilde\varphi_S, \mu^s\tilde\varphi_S)$ of common value pairs contains  a subset $\{(\frac{k+ms}{mq}, \frac{k}{mq})\mid k\in \mathbb{Z}\}$.
Since $\varphi$ is piecewise-geodesic, we obtain that $$\cvp(\tilde\varphi_S, \mu^s\tilde\varphi_S)=\{( \lambda+\frac{s}{q}, \lambda )\mid \lambda\in(-\infty, +\infty) \}.$$
Corresponding self-common value class is
$$\{( e^{2\pi(\lambda+\frac{s}{q})i}, e^{2\pi\lambda i} )\mid \lambda\in(-\infty, +\infty) \}\subset S^1\times S^1.$$ Thus, there are actually $q$ classes, which are determined by $1=\mu^0, \mu^1, \mu^2, \ldots, \mu^{q-1}$. Clearly, the trivial element $1=mu^0$ determines the trivial class consists of the diagonal of $S^1\times S^1$. Other classes are not symmetric except for the class determined by $\mu^{\frac{q}{2}}$ when $q$ is even.

Consider $s$ with $1\le s <q$. Let $\tau$ be an arbitrary loop homotopic to $\varphi$, and $\tilde \tau: \mathbb{R}\to \mathbb{D}^2$ be a lifting homotopic related to the standard lifting $\tilde\varphi_S$  of $\varphi$. Thus, $\tilde\varphi_S$ and $\tilde \tau$ have the same ending points on the circle of infinity of $\mathbb{D}^2$.

We are going to show $p_{S^1}\times p_{S^1}(\cvp(\tilde \tau, \mu^s\tilde \tau))$ contains at least two points. Observe that  $\mu^s\tilde \tau(\lambda) = \tilde \tau(\lambda+\frac{s}{q})$ for all $\lambda\in \mathbb{R}$. If we regard the unique geodesic connecting ending points of $\tilde \tau$ on the circle of infinity  as ``$x$-axis'', the images $\tilde \tau(\mathbb{R})$ and $\mu^s\tilde \tau(\mathbb{R})$ are two ``periodic'' arcs which differ by a translation along the ``$x$-axis''.
\begin{center}
\setlength{\unitlength}{0.68mm}
\begin{picture}(100,24)(0,-12)
\qbezier(-10.,-8.41)(-8.66,-7.69)(-7.22,-6.61)
\qbezier(-7.22,-6.61)(-6.02,-5.71)(-4.66,-4.5)
\qbezier(-4.66,-4.5)(-3.59,-3.54)(-2.24,-2.23)
\qbezier(-2.24,-2.23)(-1.5,-1.5)(0.11,0.11)
\qbezier(0.11,0.11)(1.65,1.65)(2.47,2.44)
\qbezier(2.47,2.44)(3.81,3.74)(4.9,4.7)
\qbezier(4.9,4.7)(6.26,5.91)(7.47,6.79)
\qbezier(7.47,6.79)(8.93,7.86)(10.28,8.56)
\qbezier(10.28,8.56)(11.86,9.38)(13.38,9.73)
\qbezier(13.38,9.73)(15.04,10.11)(16.69,9.95)
\qbezier(16.69,9.95)(18.27,9.8)(19.9,9.13)
\qbezier(19.9,9.13)(21.32,8.56)(22.83,7.57)
\qbezier(22.83,7.57)(24.09,6.74)(25.49,5.59)
\qbezier(25.49,5.59)(26.63,4.64)(27.97,3.38)
\qbezier(27.97,3.38)(28.95,2.46)(30.34,1.07)
\qbezier(30.34,1.07)(36.04,-4.6)(32.69,-1.27)
\qbezier(32.69,-1.27)(34.07,-2.64)(35.08,-3.58)
\qbezier(35.08,-3.58)(36.42,-4.83)(37.57,-5.77)
\qbezier(37.57,-5.77)(38.97,-6.92)(40.24,-7.73)
\qbezier(40.24,-7.73)(41.76,-8.69)(43.2,-9.24)
\qbezier(43.2,-9.24)(44.83,-9.86)(46.43,-9.98)
\qbezier(46.43,-9.98)(48.07,-10.09)(49.73,-9.66)
\qbezier(49.73,-9.66)(51.23,-9.27)(52.81,-8.43)
\qbezier(52.81,-8.43)(54.14,-7.71)(55.59,-6.62)
\qbezier(55.59,-6.62)(56.79,-5.73)(58.15,-4.51)
\qbezier(58.15,-4.51)(59.22,-3.55)(60.57,-2.24)
\qbezier(60.57,-2.24)(61.32,-1.51)(62.92,0.09)
\qbezier(62.92,0.09)(64.47,1.63)(65.28,2.42)
\qbezier(65.28,2.42)(66.62,3.73)(67.71,4.69)
\qbezier(67.71,4.69)(69.07,5.89)(70.28,6.78)
\qbezier(70.28,6.78)(71.74,7.85)(73.09,8.55)
\qbezier(73.09,8.55)(74.67,9.37)(76.19,9.72)
\qbezier(76.19,9.72)(77.85,10.11)(79.5,9.95)
\qbezier(79.5,9.95)(81.08,9.8)(82.71,9.14)
\qbezier(82.71,9.14)(84.13,8.57)(85.64,7.58)
\qbezier(85.64,7.58)(86.9,6.76)(88.3,5.6)
\qbezier(88.3,5.6)(89.44,4.66)(90.78,3.4)
\qbezier(90.78,3.4)(91.76,2.47)(93.16,1.09)
\qbezier(93.16,1.09)(99.89,-5.6)(95.5,-1.25)
\qbezier(95.5,-1.25)(96.89,-2.62)(97.89,-3.56)
\qbezier(97.89,-3.56)(99.23,-4.81)(100.38,-5.75)
\qbezier(100.38,-5.75)(101.78,-6.9)(103.05,-7.71)
\qbezier(103.05,-7.71)(104.57,-8.68)(106.,-9.23)
\qbezier(106.,-9.23)(107.64,-9.86)(109.23,-9.97)
\qbezier(109.23,-9.97)(110.88,-10.09)(112.54,-9.67)
\qbezier(112.54,-9.67)(114.04,-9.28)(115.62,-8.44)
\qbezier(115.62,-8.44)(116.95,-7.72)(118.4,-6.64)
\qbezier(118.4,-6.64)(119.17,-6.06)(120.,-5.37)
\qbezier(-18.,-8.41)(-16.66,-7.69)(-15.22,-6.61)
\qbezier(-15.22,-6.61)(-14.02,-5.71)(-12.66,-4.5)
\qbezier(-12.66,-4.5)(-11.59,-3.54)(-10.24,-2.23)
\qbezier(-10.24,-2.23)(-9.5,-1.5)(-7.89,0.11)
\qbezier(-7.89,0.11)(-6.35,1.65)(-5.53,2.44)
\qbezier(-5.53,2.44)(-4.19,3.74)(-3.1,4.7)
\qbezier(-3.1,4.7)(-1.74,5.91)(-0.53,6.79)
\qbezier(-0.53,6.79)(0.93,7.86)(2.28,8.56)
\qbezier(2.28,8.56)(3.86,9.38)(5.38,9.73)
\qbezier(5.38,9.73)(7.04,10.11)(8.69,9.95)
\qbezier(8.69,9.95)(10.27,9.8)(11.9,9.13)
\qbezier(11.9,9.13)(13.32,8.56)(14.83,7.57)
\qbezier(14.83,7.57)(16.09,6.74)(17.49,5.59)
\qbezier(17.49,5.59)(18.63,4.64)(19.97,3.38)
\qbezier(19.97,3.38)(20.95,2.46)(22.34,1.07)
\qbezier(22.34,1.07)(28.04,-4.6)(24.69,-1.27)
\qbezier(24.69,-1.27)(26.07,-2.64)(27.08,-3.58)
\qbezier(27.08,-3.58)(28.42,-4.83)(29.57,-5.77)
\qbezier(29.57,-5.77)(30.97,-6.92)(32.24,-7.73)
\qbezier(32.24,-7.73)(33.76,-8.69)(35.2,-9.24)
\qbezier(35.2,-9.24)(36.83,-9.86)(38.43,-9.98)
\qbezier(38.43,-9.98)(40.07,-10.09)(41.73,-9.66)
\qbezier(41.73,-9.66)(43.23,-9.27)(44.81,-8.43)
\qbezier(44.81,-8.43)(46.14,-7.71)(47.59,-6.62)
\qbezier(47.59,-6.62)(48.79,-5.73)(50.15,-4.51)
\qbezier(50.15,-4.51)(51.22,-3.55)(52.57,-2.24)
\qbezier(52.57,-2.24)(53.32,-1.51)(54.92,0.09)
\qbezier(54.92,0.09)(56.47,1.63)(57.28,2.42)
\qbezier(57.28,2.42)(58.62,3.73)(59.71,4.69)
\qbezier(59.71,4.69)(61.07,5.89)(62.28,6.78)
\qbezier(62.28,6.78)(63.74,7.85)(65.09,8.55)
\qbezier(65.09,8.55)(66.67,9.37)(68.19,9.72)
\qbezier(68.19,9.72)(69.85,10.11)(71.5,9.95)
\qbezier(71.5,9.95)(73.08,9.8)(74.71,9.14)
\qbezier(74.71,9.14)(76.13,8.57)(77.64,7.58)
\qbezier(77.64,7.58)(78.9,6.76)(80.3,5.6)
\qbezier(80.3,5.6)(81.44,4.66)(82.78,3.4)
\qbezier(82.78,3.4)(83.76,2.47)(85.16,1.09)
\qbezier(85.16,1.09)(91.89,-5.6)(87.5,-1.25)
\qbezier(87.5,-1.25)(88.89,-2.62)(89.89,-3.56)
\qbezier(89.89,-3.56)(91.23,-4.81)(92.38,-5.75)
\qbezier(92.38,-5.75)(93.78,-6.9)(95.05,-7.71)
\qbezier(95.05,-7.71)(96.57,-8.68)(98.,-9.23)
\qbezier(98.,-9.23)(99.64,-9.86)(101.23,-9.97)
\qbezier(101.23,-9.97)(102.88,-10.09)(104.54,-9.67)
\qbezier(104.54,-9.67)(106.04,-9.28)(107.62,-8.44)
\qbezier(107.62,-8.44)(108.95,-7.72)(110.4,-6.64)
\qbezier(110.4,-6.64)(111.17,-6.06)(112.,-5.37)
\put(11.708, 9){\makebox(0,0)[cc]{$\bullet$}}
\put(43, -9.2){\makebox(0,0)[cc]{$\bullet$}}
\put(102, 0){\makebox(0,0)[cc]{$x$}}
\put(-20,0){\vector(1,0){120}}
\end{picture}
\end{center}
By a simple argument of intermediate value theorem, one can prove that  $\mu^s\tilde \tau$ and $\tilde \tau$ must have at least two intersections in each period $1$.

Note that the pair $(x',x'')\in p_{S^1}\times p_{S^1}(\cvp(\tilde\tau, \mu^s\tilde\tau))$ if and only if
the pair $(x'',x')\in p_{S^1}\times p_{S^1}(\cvp(\tilde\tau, \mu^{q-s}\tilde\tau))$. Hence, two classes $p_{S^1}\times p_{S^1}(\cvp(\tilde\tau, \mu^s\tilde\tau))$ and $p_{S^1}\times p_{S^1}(\cvp(\tilde\tau, \mu^{q-s}\tilde\tau))$ give at least two self-intersections for $s=1, 2, \ldots, q-1$ and $s\ne \frac{q}{2}$.
If $q$ is even, the class $p_{S^1}\times p_{S^1}(\cvp(\tilde\tau, \mu^{\frac{q}{2}}\tilde\tau))$ determined by $\mu^{\frac{q}{2}}$ gives at least one self-intersection. There are at least $q-1$ self-intersections in these self-common value classes determined by $\mu, \mu^2, \ldots, \mu^{q-1}$.
\end{proof}

Notice that the self-common value classes of $\varphi$ determined by $\mu^s$ and $\mu^{s+q}$ are the same for all $s$.  In the special case of this Lemma that $s=0$, same as $s=tq$ for all $t$, corresponding common value class consists of the whole diagonal. This class never vanishes, but has nothing to do with the real self-intersections.

\section{Minimum theorems}

In this section, we shall explain that the geometric intersection and self-intersection numbers of loops on surfaces can be derived from the number of essential common value classes. Moreover, the loops realizing their minimal number can be obtained by arbitrary small perturbations on geodesic loops.

\begin{Lemma}\label{lem-prime-push}
Let $\varphi$  be a loop determined by a cyclically $D$-reduced word $\mu=u_1\cdots u_m$. If $\mu$ is prime, then the unique geodesic loop determined by $\mu$ has the minimal self-intersection, which is exactly the half of the number of essential self-common value classes of $\varphi$.
\end{Lemma}

\def\ordc{\succ_c}

\begin{proof}
Let $\varphi_G$ be the unique geodesic loop determined by $\mu$. Since $\mu$ is prime, by Theorem~\ref{th:cvp-component} and Theorem~\ref{thm-connect-class}, there is no self-common value class of type (4) or (5). It follows that any two liftings of $\varphi$ have no common ends on the circle of infinity of $\mathbb{D}^2$. It is known that homotopy related liftings of $\varphi$ and $\varphi_G$ share the same ends. Since $\varphi_G$ is geodesic, any two distinct liftings of $\varphi_G$, as two geodesic lines in $\mathbb{D}^2$, contain at most $1$ common value pair. Moreover, two distinct liftings of $\varphi_G$ contains a common value pair if and only if they determines an essential self-common class. We write $N$ for the number of essential self-common classes of $\varphi$, which is a also that of $\varphi_G$ from the homotopy invariance. From Lemma~\ref{Lem-symmetric}, all essential self-common value classes are non-symmetric. Thus, the number of self-intersections of $\varphi_G$ is $\frac{N}{2}$.

Recall from \cite[Theorem 5.6]{guying}) that $\frac{N}{2}$ is a lower bound of the number self-intersections of all loops in the free homotopy class of $\varphi$. Thus, $\frac{N}{2}$ is the minimal number $\mysi(\varphi)$ of geometric self-intersections.
\end{proof}

Consider the geometric self-intersections of general loops.

\begin{Theorem}\label{thm-minimal-self}
Let $\varphi$  be a non-trivial loop on $F_g$ determined by $\mu^q$ with $q>0$, where $\mu$ is a prime and cyclically $D$-reduced word. Then the geometric self-intersection number $\mysi(\varphi)$ is $\frac{N}{2} +q-1$, where $N$ is the number of essential self-common value classes of $\varphi$. Moreover, a loop realizing its minimal self-intersection can be obtained by an arbitrary small homotopy from the geodesic loop in the loop class of $\varphi$.
\end{Theorem}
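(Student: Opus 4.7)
The plan is to prove the theorem by matching a lower bound with an upper bound, both equal to $N/2+q-1$.

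\emph{Lower bound.} I will decompose the contribution into two disjoint parts coming from essential and inessential common value classes. By Lemma~\ref{Lem-symmetric} no essential self-common value class is symmetric, so the $N$ essential classes pair up under the swap $(x',x'')\mapsto(x'',x')$ into $N/2$ doubletons, each forcing a single double point in any transverse representative; this recovers the $N/2$ contribution via \cite[Theorem~5.6]{guying}. For the inessential power classes $C_1,\ldots,C_{q-1}$, Lemma~\ref{Lem-inessential-non-empty} shows each $C_s$ retains at least two ordered self-common value pairs in any loop $\xi\simeq\varphi$. A short computation with the shift parameter $s/q$ gives that $C_s$ is symmetric iff $2s\equiv 0\pmod q$, so there is no symmetric $C_s$ for odd $q$ and exactly one ($s=q/2$) for even $q$. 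For a non-symmetric pair $\{C_s,C_{q-s}\}$, the two forced ordered pairs in $C_s$ must project to two distinct unordered double points (otherwise the swap $(y_1,x_1)$ would also lie in $C_s$, forcing $C_s=C_{q-s}$), contributing $2$ double points per such pair. For the symmetric class $C_{q/2}$, the two forced ordered pairs may be swap-related, so the minimum contribution is a single double point. Tallying: $(q-1)/2$ non-symmetric pairs give $q-1$ for odd $q$, and $(q-2)/2$ non-symmetric pairs plus one symmetric class give $(q-2)+1=q-1$ for even $q$. Hence $\mysi(\varphi)\ge N/2+q-1$.

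\emph{Upper bound.} Apply Lemma~\ref{lem-prime-push} to the prime cyclically $D$-reduced $\mu$ to obtain $\mu'\simeq\mu$ in general position realizing $\mysi(\mu)$ double points, and note via Proposition~\ref{prop-s-time} that $N=q^2\cdot(\text{essential self-classes of }\mu)=2q^2\,\mysi(\mu)$. Construct $\varphi'\simeq\varphi$ by placing $q$ copies $\mu'_1,\ldots,\mu'_q$ of $\mu'$ at strictly increasing normal offsets $0<\epsilon_1<\cdots<\epsilon_q<\varepsilon$ inside a thin tubular neighbourhood of the geodesic loop of $\mu$, joined cyclically at the base point by short arcs arranged as in the classical $c_1^q$ picture so as to create exactly $q-1$ connecting double points. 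The tally of double points of $\varphi'$ is then
\[
\underbrace{q\cdot\mysi(\mu)}_{\text{intra-copy}}+\underbrace{\tbinom{q}{2}\cdot 2\mysi(\mu)}_{\text{inter-copy}}+\underbrace{(q-1)}_{\text{connecting}}=q^2\mysi(\mu)+(q-1)=\frac{N}{2}+q-1,
\]
where the inter-copy count follows because two parallel copies of $\mu'$ cross twice near each of the $\mysi(\mu)$ self-intersections of $\mu'$ and nowhere else.

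\emph{Main obstacle.} The key difficulty is verifying that this explicit perturbation saturates the lower bound without introducing any spurious extra intersections. This requires extending the total-order construction of Lemma~\ref{lem-totalorder} from a prime word to the non-prime $\mu^q$: one selects offsets $\epsilon_{i,j}$ for the $j$-th segment of the $i$-th lap refining simultaneously the intra-lap order of Lemma~\ref{lem-prime-push} (which disposes of inessential classes internal to each $\mu'$) and the strict inter-lap order $\epsilon_1<\cdots<\epsilon_q$ (which collapses each inessential power class $C_s$ to its minimal count: two double points for each non-symmetric pair $\{C_s,C_{q-s}\}$ and one for the symmetric $C_{q/2}$). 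Once this refined perturbation and the classical base-point connection are in place, the upper bound matches the lower bound and the theorem follows.
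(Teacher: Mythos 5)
Your proposal is correct and follows essentially the same route as the paper: the lower bound combines Lemma~\ref{Lem-symmetric} (essential classes are non-symmetric, hence pair up to give $N/2$ double points) with Lemma~\ref{Lem-inessential-non-empty} and the same parity analysis of the power classes $C_s$ to extract $q-1$ more, while the upper bound perturbs the prime loop via Lemma~\ref{lem-prime-push} and runs $q$ parallel copies in a tubular band joined at the base point, which is exactly the paper's $\bar\varphi'_R\eta$ construction with the same $q^2\,\mysi(\mu)+(q-1)$ tally. The only cosmetic difference is that you count the $q^2$ factor as intra-copy plus inter-copy crossings rather than citing it directly, which is a harmless elaboration.
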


\begin{proof}
Note that each essential self-common value class must contain at least one common value pair. By Lemma~\ref{Lem-symmetric}, any essential class is not symmetric, and therefore two essential self-common value classes contribute one self-intersection (see \cite[Theorem 5.6]{guying}). Two pairs $(x', x'')$ and
$(x'', x')$ in $S^1\times S^1$ give the same self-intersection.

Write $\mu=u_1\cdots u_m$. By homotopy invariance we may assume that $\varphi$ is piecewise-geodesic. Let us consider the inessential self-common value classes determined by $\mu^s$, i.e. $\cvp(\tilde\varphi_S, \mu^s\tilde\varphi_S)$. As in the proof of Lemma~\ref{Lem-inessential-non-empty}, we have that
$$(p_{S^1}\times p_{S^1}) (\cvp(\tilde\varphi_S, \mu^s\tilde\varphi_S))
= \{( e^{2\pi(\lambda+\frac{s}{q})i}, e^{2\pi\lambda i} )
 \mid \lambda\in(-\infty, +\infty) \}\subset S^1\times S^1.$$
Note that there are actually $q$  classes, with $s=0,1,\ldots, q-1$.  Clearly, the class determined by $\mu^0=1$ contributes nothing to the self-intersections. The other $q-1$ classes give at least $q-1$ self-intersections. Thus, $\frac{N}{2} +q-1$ is a lower bound for geometric self-intersection number of loops homotopic to $\varphi$.

Let $\varphi'_G$ be the geodesic loop determined by $\mu$. By Lemma~\ref{lem-prime-push}, the number of self-intersection of $\varphi'_G$ is just the minimal geometric self-intersection number of $\varphi'_G$. By Proposition~\ref{prop-s-time}, this number is just $\frac{N}{2q^2}$. Let $\bar\varphi'_G: S^1\times I \to F_g$ be a natural extending of map $\varphi'_G: S^1\to F_g$ so that the image of $\bar\varphi'_R$ is a tubular neighborhood of image of  $\varphi'_G$, and let $\eta: S^1\to S^1\times I$ be a map given by
$$e^{\theta i}  \mapsto
\left\{
\begin{array}{ll}
(e^{2q \theta i}, \frac{\theta}{\pi}) & \mbox{ if } 0\le \theta <\pi, \\
(e^{0 i}, \frac{2\pi - \theta}{\pi}) & \mbox{ if } \pi\le \theta <2\pi. \\
\end{array}
\right.
$$
Then the loop $\bar\varphi'_G\eta$ is represented by the word $\mu^q$, and hence it is homotopic to $\varphi$. Note that after such composition, a self-intersection of $\bar\varphi'_G$ becomes $q^2$ self-intersection of $\bar\varphi'_G\eta$.
\begin{center}
\setlength{\unitlength}{2mm}
\begin{picture}(30,14)(-12,-7)
\qbezier(12.,0.)(12.,1.55)(10.26,2.82)
\qbezier(10.26,2.82)(9.06,3.69)(7.26,4.25)
\qbezier(7.26,4.25)(5.75,4.72)(4.01,4.9)
\qbezier(4.01,4.9)(2.36,5.07)(0.69,4.96)
\qbezier(0.69,4.96)(-1.03,4.84)(-2.58,4.44)
\qbezier(-2.58,4.44)(-4.37,3.98)(-5.66,3.21)
\qbezier(-5.66,3.21)(-7.46,2.14)(-7.89,0.75)
\qbezier(-7.89,0.75)(-8.39,-0.9)(-6.8,-2.38)
\qbezier(-6.8,-2.38)(-5.73,-3.37)(-3.92,-4.03)
\qbezier(-3.92,-4.03)(-2.45,-4.57)(-0.7,-4.81)
\qbezier(-0.7,-4.81)(0.93,-5.04)(2.62,-4.99)
\qbezier(2.62,-4.99)(4.33,-4.94)(5.91,-4.6)
\qbezier(5.91,-4.6)(7.69,-4.22)(9.05,-3.54)
\qbezier(9.05,-3.54)(10.87,-2.64)(11.59,-1.41)
\qbezier(11.59,-1.41)(12.,-0.72)(12.,0.)
\end{picture}
\ \
\begin{picture}(30,14)(-12,-7)
\qbezier(12.,0.)(12.03,1.55)(10.31,2.85)
\qbezier(10.31,2.85)(9.13,3.75)(7.34,4.33)
\qbezier(7.34,4.33)(5.84,4.82)(4.1,5.03)
\qbezier(4.1,5.03)(2.45,5.22)(0.78,5.13)
\qbezier(0.78,5.13)(-0.94,5.04)(-2.5,4.67)
\qbezier(-2.5,4.67)(-4.28,4.24)(-5.61,3.51)
\qbezier(-5.61,3.51)(-7.38,2.53)(-8.01,1.22)
\qbezier(-8.01,1.22)(-8.82,-0.45)(-7.55,-2.06)
\qbezier(-7.55,-2.06)(-6.66,-3.21)(-4.89,-4.04)
\qbezier(-4.89,-4.04)(-3.49,-4.7)(-1.74,-5.07)
\qbezier(-1.74,-5.07)(-0.15,-5.41)(1.55,-5.46)
\qbezier(1.55,-5.46)(3.24,-5.52)(4.87,-5.29)
\qbezier(4.87,-5.29)(6.61,-5.05)(8.09,-4.51)
\qbezier(8.09,-4.51)(9.86,-3.88)(11.,-2.92)
\qbezier(11.,-2.92)(12.58,-1.61)(12.63,-0.03)
\qbezier(12.63,-0.03)(12.67,1.55)(11.15,2.93)
\qbezier(11.15,2.93)(10.04,3.93)(8.3,4.63)
\qbezier(8.3,4.63)(6.84,5.22)(5.1,5.51)
\qbezier(5.1,5.51)(3.49,5.79)(1.8,5.79)
\qbezier(1.8,5.79)(0.1,5.78)(-1.51,5.5)
\qbezier(-1.51,5.5)(-3.25,5.2)(-4.7,4.6)
\qbezier(-4.7,4.6)(-6.44,3.89)(-7.52,2.86)
\qbezier(-7.52,2.86)(-8.98,1.47)(-8.94,-0.14)
\qbezier(-8.94,-0.14)(-8.9,-1.72)(-7.43,-3.09)
\qbezier(-7.43,-3.09)(-6.33,-4.11)(-4.61,-4.83)
\qbezier(-4.61,-4.83)(-3.15,-5.45)(-1.43,-5.77)
\qbezier(-1.43,-5.77)(0.18,-6.07)(1.88,-6.1)
\qbezier(1.88,-6.1)(3.56,-6.12)(5.18,-5.87)
\qbezier(5.18,-5.87)(6.91,-5.6)(8.4,-5.05)
\qbezier(8.4,-5.05)(10.12,-4.4)(11.3,-3.45)
\qbezier(11.3,-3.45)(12.84,-2.21)(13.17,-0.71)
\qbezier(13.17,-0.71)(13.54,0.95)(12.36,2.51)
\qbezier(12.36,2.51)(11.46,3.7)(9.79,4.6)
\qbezier(9.79,4.6)(8.43,5.35)(6.7,5.8)
\qbezier(6.7,5.8)(5.14,6.21)(3.43,6.35)
\qbezier(3.43,6.35)(1.77,6.48)(0.11,6.34)
\qbezier(0.11,6.34)(-1.6,6.2)(-3.15,5.77)
\qbezier(-3.15,5.77)(-4.88,5.3)(-6.23,4.53)
\qbezier(-6.23,4.53)(-7.88,3.6)(-8.75,2.37)
\qbezier(-8.75,2.37)(-9.85,0.82)(-9.48,-0.86)
\qbezier(-9.48,-0.86)(-9.16,-2.37)(-7.7,-3.66)
\qbezier(-7.7,-3.66)(-6.56,-4.67)(-4.87,-5.39)
\qbezier(-4.87,-5.39)(-3.41,-6.01)(-1.7,-6.35)
\qbezier(-1.7,-6.35)(-0.09,-6.67)(1.6,-6.72)
\qbezier(1.6,-6.72)(3.28,-6.77)(4.91,-6.54)
\qbezier(4.91,-6.54)(6.63,-6.31)(8.14,-5.79)
\qbezier(8.14,-5.79)(9.84,-5.21)(11.12,-4.33)
\qbezier(11.12,-4.33)(12.69,-3.26)(13.39,-1.91)
\qbezier(13.39,-1.91)(14.23,-0.28)(13.66,1.39)
\qbezier(13.66,1.39)(13.17,2.85)(11.69,4.06)
\qbezier(11.69,4.06)(10.51,5.04)(8.83,5.74)
\qbezier(8.83,5.74)(7.36,6.35)(5.65,6.68)
\qbezier(5.65,6.68)(4.04,6.99)(2.35,7.04)
\qbezier(2.35,7.04)(0.67,7.08)(-0.96,6.85)
\qbezier(-0.96,6.85)(-2.67,6.61)(-4.19,6.1)
\qbezier(-4.19,6.1)(-5.88,5.52)(-7.18,4.66)
\qbezier(-7.18,4.66)(-8.74,3.62)(-9.52,2.31)
\qbezier(-9.52,2.31)(-10.46,0.72)(-10.1,-0.96)
\qbezier(-10.1,-0.96)(-9.78,-2.49)(-8.42,-3.82)
\qbezier(-8.42,-3.82)(-7.33,-4.89)(-5.69,-5.69)
\qbezier(-5.69,-5.69)(-4.26,-6.38)(-2.56,-6.8)
\qbezier(-2.56,-6.8)(-0.98,-7.19)(0.72,-7.31)
\qbezier(0.72,-7.31)(2.38,-7.42)(4.03,-7.27)
\qbezier(4.03,-7.27)(5.74,-7.12)(7.3,-6.69)
\qbezier(7.3,-6.69)(9.,-6.23)(10.39,-5.48)
\qbezier(10.39,-5.48)(12.,-4.62)(13.03,-3.48)
\qbezier(13.03,-3.48)(14.28,-2.08)(14.48,-0.49)
\qbezier(14.48,-0.49)(14.51,-0.25)(14.51,0.)
\qbezier(12,0)(13.0, -0.2)(14.51,0.)
\end{picture}
\end{center}
Together with $q-1$ intersection of $\eta$, the number of self-intersections of $\bar\varphi'_G\eta$  is $\frac{N}{2} +q-1$.
\end{proof}

This result coincides with the statement of S. P. Tan for surfaces with boundaries, see \cite[Sec. 3, Rem. (2)]{Tan1996}.

It should be mentioned that by using our \gr basis in Theorem~\ref{grobner}, one can tell if an element in $\pi_1(F_g, y_0)$ is prime or not. Especially, we have

\begin{Proposition}
Let $\alpha$ be an element in $\pi_1(F_g, y_0)$. Then $\alpha$ is prime in $\pi_1(F_g, y_0)$ if and only if the cyclic $D$-reduced forms of $\alpha$ is prime.
\end{Proposition}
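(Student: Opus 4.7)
The ($\Leftarrow$) direction is immediate: if some cyclically $D$-reduced form $\mu$ of $\alpha$ factors as $\omega^q$ ($q\ge 2$) as a word, then $\mu=\omega^q$ holds in $\pi_1(F_g,y_0)$, and since $\mu$ is conjugate to $\alpha$ (write $\mu=g\alpha g^{-1}$), the identity $\alpha=(g^{-1}\omega g)^q$ exhibits $\alpha$ as a proper $q$-th power. For the contrapositive of ($\Rightarrow$), suppose $\alpha=\beta^q$ with $q\ge 2$ in $\pi_1(F_g,y_0)$ and let $\nu$ be a cyclically $D$-reduced word representing the conjugacy class of $\beta$ (this exists because $D$-reduction is terminating and confluent, and cyclic reduction decreases length). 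Then $\nu^q$ represents the conjugacy class of $\alpha$; if I can show that $\nu^q$ is itself cyclically $D$-reduced, it is a cyclically $D$-reduced form of $\alpha$ that is a proper power as a word, and we are done.

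The task therefore reduces to the following Key Lemma: \emph{if $\nu$ is cyclically $D$-reduced, then $\nu^k$ is $D$-reduced for every $k\ge 1$} (so in particular $\nu^{2q}$ is $D$-reduced, making $\nu^q$ cyclically $D$-reduced by the Corollary preceding this Proposition). I plan to prove the lemma by induction on $k$. If $\nu^k$ contains some leading word $L$ of a $D$-polynomial that is not already a subword of $\nu^{k-1}$, then $L$ must straddle the last $\nu$-boundary in $\nu^k$. For the short leading words (families $\myd{3}$ through $\myd{8}$, of length at most $2g$), use the $|\nu|$-periodicity of $\nu^k$: any window of length at most $2|\nu|$ is a cyclic translate of a window in $\nu^2$, so when $2|\nu|\ge |L|$ the leading word $L$ embeds into $\nu^2$, contradicting cyclic $D$-reducedness of $\nu$; the remaining case $|\nu|<g$ is then a finite enumeration. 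For the long leading words $c_j W^s c_j^{\pm 1}$ coming from $\myd{1,j,s}$ and $\myd{2,j,s}$, where $W$ has length $2g$, my plan is to combine the $|\nu|$-period of $\nu^k$ with the $2g$-period of the central factor $W^s$ via the Fine--Wilf theorem: for $s$ large enough, $W^s$ is purely periodic with period $d=\gcd(|\nu|,2g)$, and a shift by an appropriate multiple of $|\nu|$ then translates $c_j W^s c_j^{\pm 1}$ — or a shorter instance $c_j W^{s'} c_j^{\pm 1}$ with $s'\ge 1$ — into a window of $\nu^2$.

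The main obstacle will be this last shifting argument for $\myd{1,j,s}$ and $\myd{2,j,s}$. The endpoint letters $c_j$ and $c_j^{\pm 1}$ are not part of the periodic block $W^s$, so the chosen translation must keep these distinguished endpoints aligned with the letters they are required to be; this needs careful bookkeeping of their congruence classes modulo $|\nu|$ against the $\gcd(|\nu|,2g)$-periodicity of the interior. Small values of $s$ (where Fine--Wilf does not directly apply) and the case $|\nu|<2g$ will need to be dispatched by direct case analysis, using the explicit form of $W$ and the interactions already catalogued in the proof of Theorem~\ref{grobner}. Once the Key Lemma is in hand, the proposition follows: $\nu^q$ is then a cyclically $D$-reduced form of $\alpha$ that is visibly a proper $q$-th power as a word, so its cyclically $D$-reduced form is not prime.
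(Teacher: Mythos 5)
The paper states this Proposition without any proof, so there is nothing to compare against; I am judging your argument on its own. Your skeleton is sound: the easy direction is fine, and the hard direction does reduce to your Key Lemma that $\nu^k$ is $D$-reduced whenever $\nu$ is cyclically $D$-reduced (with the caveat that, since cyclically $D$-reduced forms of a conjugacy class are not unique --- see the paper's example $c_4c_1^{-1}c_3$ versus $c_4c_3c_1^{-1}$ --- what your two halves jointly establish is ``$\alpha$ is prime iff \emph{every} cyclically $D$-reduced form of $\alpha$ is prime as a word'', and you should say so explicitly). The genuine gap is that the Key Lemma is never proved: the Fine--Wilf shifting argument for $\myd{1,j,s}$ and $\myd{2,j,s}$, the small-$s$ cases, and the case $|\nu|<2g$ are all deferred to ``careful bookkeeping'' and ``direct case analysis'', and that is exactly where the content of the Proposition lives. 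A further error in the plan: the periodic interior block $W$ of those leading words has length $2g-1$, not $2g$, which changes the $\gcd$ entering any Fine--Wilf estimate.

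The Key Lemma is nevertheless true, and needs none of that machinery. Put $n=|\nu|$. Since $\nu^k$ is $n$-periodic, any factor of $\nu^k$ of length at most $n+1$ already occurs in $\nu^2$ (shift its starting position into $[0,n)$). So it suffices to rule out an occurrence in $\nu^k$ of a leading word of length $L\ge n+2$; for such an occurrence, starting at position $p$, the positions $p$ and $p+n$ both lie inside it and carry equal letters. For the length-$2g$ leading words of $\myd{3}$--$\myd{6}$ this is impossible because their $2g$ letters are pairwise distinct while $n\le 2g-2$. For the leading words $c_j\,W^s\,c_j^{-1}$ of $\myd{1,j,s}$ and $\myd{2,j,s}$, the block $W$ consists of the $2g-1$ letters other than $c_j^{\pm1}$; from $L\ge n+2$ one gets $p+1\le p+n\le p+L-2$, so position $p+n$ lies strictly inside the block $W^s$ and yet must carry the letter $c_j$ found at position $p$ --- a contradiction. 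Hence every leading-word occurrence in $\nu^k$ has length at most $n+1$ and transfers to $\nu^2$, contradicting cyclic $D$-reducedness of $\nu$. Replacing your sketched induction by this argument closes the gap.
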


Now we turn into the geometric intersections of two loops.

\begin{Theorem}\label{thm-minimal-intersection}
Let $\varphi$ and $\psi$ be two loops on oriented surface $F_g$. Then the minimal intersection number $\myi(\varphi, \psi)$ of $\varphi$ and $\psi$ is the same as the number of essential self-common value classes of $\varphi$ and $\psi$. Moreover, the loops realizing this minimal intersection can be obtained by arbitrary small homotopyies from the geodesic loops in the homotopy classes of determined by $\varphi$ and $\psi$, respectively.
\end{Theorem}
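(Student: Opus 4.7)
The plan is to prove $\myi(\varphi,\psi) = N$, where $N$ is the number of essential common value classes of $\varphi$ and $\psi$, by establishing the two inequalities separately and realizing both by an explicit perturbation of piecewise-geodesic representatives. This mirrors the structure of Theorem~\ref{thm-minimal-self} but is in fact \emph{easier} than the self-intersection case, because components of types (4) and (5), which caused the primeness hypothesis there, always have index zero (Theorem~\ref{thm-index}) and thus need not be preserved by the perturbation.

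\textbf{Reduction.} First I would use the \gr basis $D$ of Theorem~\ref{grobner} to put the words representing $\varphi$ and $\psi$ into cyclically $D$-reduced form $\mu = u_1\cdots u_m$ and $\nu = v_1\cdots v_n$, and replace $\varphi, \psi$ by the piecewise-geodesic loops $\varphi_R, \psi_R$ they determine (Lemma~\ref{piecewise-geodesic}). By Theorem~\ref{thm-connect-class} each component of $\cvp(\varphi_R, \psi_R)$ enumerated in Theorem~\ref{th:cvp-component} is a full common value class, and its index is read off locally from the table in Theorem~\ref{thm-index}.

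\textbf{Lower bound.} The inequality $\myi(\varphi,\psi) \ge N$ is essentially formal: by the general homotopy invariance of the index sum over a common value class \cite[Theorem~4.10]{guying}, every transverse pair $(\varphi',\psi')$ with $\varphi'\simeq \varphi$ and $\psi'\simeq \psi$ must contain at least one intersection point in each essential class, and so has at least $N$ intersection points in total.

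\textbf{Upper bound via perturbation.} For the reverse inequality I would adapt the normal-pushing construction of Lemma~\ref{lem-prime-push} to the two-loop setting. Fix normal directions along the generator loops $c_1,\ldots,c_{2g}$ compatible with the orientation of $F_g$. By Lemma~\ref{lem-totalorder} the relation $\lord$ extends to a total order on the set $\{\varphi_i\}_{i=1}^m \sqcup \{\psi_j\}_{j=1}^n$. Given $\varepsilon > 0$, choose pairwise distinct numbers $\varepsilon_i, \delta_j \in (0, \varepsilon/2)$ so that larger elements in the total order receive strictly larger displacements. Push each sub-arc $\mu_i$ of $\varphi_R$ by distance $\varepsilon_i$ along the prescribed normal of the corresponding generator, similarly push each sub-arc $\nu_j$ of $\psi_R$ by distance $\delta_j$, and inside a small disk $D$ about the base point $y_0$ reconnect consecutive pushed sub-arcs by short geodesic bridges. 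This produces loops $\varphi', \psi'$ in general position with $d(\varphi_R,\varphi'), d(\psi_R, \psi') < \varepsilon$, giving the last sentence of the theorem.

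\textbf{The main obstacle.} The technical heart of the proof is to verify that this perturbation achieves exactly $\sharp(\varphi'\cap\psi') = N$, i.e.\ that inessential classes disappear while every essential class becomes a single transverse crossing. I would argue this case by case using Theorem~\ref{thm-index}. Type-(1) singletons survive iff their local $\Theta$-sum is $\pm 1$, and the geodesic bridges in $D$ remove exactly those with $\Theta$-sum zero. For types (2) and (3), the choice of order prescribed by $\lord$ ensures that at each of the two ends of a common arc the pushed copies either cross transversally (when the two $\Theta$-values agree in sign, i.e.\ for essential classes of index $\pm 1$) or slide past each other without meeting (when the $\Theta$-values disagree, i.e.\ for inessential classes of index $0$). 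For types (4) and (5) the two parallel copies of an entire overlapping lifting are disjoint after pushing because all $\varepsilon_i$ and $\delta_j$ differ. Combining the two bounds yields $\myi(\varphi,\psi) = N$, and the perturbation realizes it through an arbitrarily small homotopy.
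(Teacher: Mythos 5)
Your overall architecture (lower bound from the general index theory of \cite[Theorem 4.10]{guying}, upper bound by normal pushing governed by the order $\lord$) is the same as the paper's, and for the case where both cyclically $D$-reduced words $\mu$ and $\nu$ are prime your perturbation argument matches the paper's proof essentially line by line. The gap is in the non-prime case, which you declare ``easier'' and absorb into the same pushing construction. Lemma~\ref{lem-totalorder} is stated, and proved, only under the hypothesis that neither word is a proper power: its proof of transitivity rests on the claim that any two arcs carrying the same letter span a common value class of type (2), and this fails precisely when a word is a proper power (or when $\mu$ and $\nu$ generate a type (4)/(5) component), because then the overlap of the corresponding liftings is bi-infinite and the relation $\lord$ between those two arcs is simply not defined. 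You invoke the lemma unconditionally, so the total order you need for choosing the displacements $\varepsilon_i,\delta_j$ is not available exactly in the cases you claim are harmless.

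Your one-line disposal of types (4) and (5) --- ``the two parallel copies of an entire overlapping lifting are disjoint after pushing because all $\varepsilon_i$ and $\delta_j$ differ'' --- is also not sound as stated. Pairwise distinctness of the displacements does not prevent the values $\{\varepsilon_i\}$ and $\{\delta_j\}$ from interleaving, and if they interleave then the short reconnecting bridges of $\varphi'$ near $y_0$ (which must travel between two different displacement levels of $\varphi'$) are forced to cross the parallel strands of $\psi'$ sitting at intermediate levels, creating spurious intersections that your count does not see. To make the pushed loops disjoint along a type (4)/(5) overlap one needs all displacements of one loop strictly smaller than all displacements of the other, which is a constraint the order $\lord$ does not supply. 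The paper sidesteps both problems by proving the theorem first for prime $\mu,\nu$ and then reducing $\mu^s,\nu^t$ to that case via the embedded-band construction of Theorem~\ref{thm-minimal-self} together with the multiplicativity of the count in Proposition~\ref{prop-s-time}; the band technique confines each loop to a thin annular neighborhood of its primitive representative, so the extra crossings produced by the reconnection are self-intersections only and do not contribute to $\myi(\varphi,\psi)$. You should either adopt that reduction or substantially strengthen your direct argument in the non-prime and common-primitive cases.
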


\begin{proof}
The proof of this theorem is trivial if one of $\varphi$ and $\psi$ is homotopic to the constant loop.
Now we assume that $\varphi$ and $\psi$  piecewise-geodesic loops, which are respectively determined $D$-reduced non-trivial words $\mu = u_1\cdots u_m$ and $\nu = v_1\cdots v_n$.

If both of $\mu$ and $\nu$ are prime words, as in the proof of Lemma~\ref{lem-prime-push}, we know that the geodesic loops determined respectively by $\mu$ and $\nu$ will realize minimal intersection.

In general, $\varphi$ and $\psi$ are respectively determined by $\mu^s$ and $\nu^t$, where $\mu$ and $\nu$ are both cyclically $D$-reduced and prime, we can use the embedding band technique in the proof of Theorem~\ref{thm-minimal-self}. We shall obtain loops $\varphi''$ and $\psi''$ with intersection number $stN_R$, where $N_R$ is the number of essential common value classes of loops determined by $\mu$ and $\nu$. By proposition~\ref{prop-s-time}, we are done.
\end{proof}

\section{An Example}

In this final section, we give an example, illustrating our method to determine the geometric intersections.

\begin{Example}
Let $\varphi$ and $\psi$ be two loops in the oriented surface $F_2$ of genus $2$, which are determined by $\mu =c_{4}c_{3}c_{4}c_{1}^{-1}, \nu = c_{4}^{-1}c_{3}c_{4}c_{3}^{-1}\in \pi_1(F_2)$, respectively. Then $\myi(\varphi, \psi)=2$.
\end{Example}

By homotopy invariance, we may assume that $\varphi$ and $\psi$ are piecewise-geodesic loops determined by $\mu$ and $\nu$, respectively.   Let us consider the components of common value pairs of $\varphi$ and $\psi$. Starting with $|\mu|\times |\nu| =16$ common value pairs, $(e^{\frac{2k\pi i}{4}}, e^{\frac{2l\pi i}{4}})=(e^{\frac{k\pi i}{2}}, e^{\frac{l\pi i}{2}})$, $k,l  = 1,2,3,4$.

By reading the letters in $\mu$ and $\nu$, since $u_2=v_2=c_3$, the pair $(e^{\frac{\pi i}{2}}, e^{\frac{\pi i}{2}})$ and $(e^{\frac{2\pi i}{2}}, e^{\frac{2\pi i}{2}})$ lie in the same component of $\cvp(\varphi, \psi)$. Since $u_3=v_3=c_4$, the pair $(e^{\frac{3\pi i}{2}}, e^{\frac{3\pi i}{2}})$ also lies in the component mentioned  above. There is no more extension in this direction, and no more extension in another direction because $u_4\ne v_4$, i.e. $u_0\ne v_0$. Hence we have the first data $(1,1,2)_{\mu, \nu}$, which gives a common value class
$$(p_{S^1}\times p_{S^1})(\cvp((u_1u_2u_3u_4)^{-1}\tilde \varphi_S,\ (v_1v_2v_3v_4)^{-1}\tilde \psi_S))$$ (see Theorem~\ref{th:cvp-component} and its corollary).

The pair $(e^{\frac{\pi i}{2}}, e^{\frac{2\pi i}{2}})$ is an isolated point of $\cvp(\varphi, \psi)$, giving the data $(1,2,0)_{\mu, \nu}$.

Consider the pair $(e^{\frac{3\pi i}{2}}, e^{\frac{\pi i}{2}})$. Since $u_3=v_1^{-1}=c_4$, the pairs $(e^{\frac{3\pi i}{2}}, e^{\frac{\pi i}{2}})$ and $(e^{\frac{2\pi i}{2}}, e^{\frac{2\pi i}{2}})$ lie in the same component of $\cvp(\varphi, \psi)$. There is no more extension, and hence we have the data $(2,4,-1)_{\mu, \nu}$.

Finally, we obtain all the components of $\cvp(\varphi, \psi)$:
$$
\begin{array}{llcllcl}
\mbox{data} & \mbox{pair of subwords} & \mbox{index} &
 \mbox{data} & \mbox{pair of subwords} & \mbox{index} \\
 (1,1,2)_{\mu, \nu} & {c_{4}c_{3}c_{4}c_{1}^{-1}},\ {c_{4}^{-1}c_{3}c_{4}c_{3}^{-1}} & 1 &
 (1,2,0)_{\mu, \nu} & {c_{4}c_{3}}, {c_{3}c_{4}}, & 0\\
 (2,4,-1)_{\mu, \nu} & {c_{3}c_{4}c_{1}^{-1}},\ {c_{3}^{-1}c_{4}^{-1}c_{3}} & 0 &
 (2,4,0)_{\mu, \nu} & {c_{3}c_{4}},\ {c_{3}^{-1}c_{4}^{-1}} & 0\\
 (3,1,0)_{\mu, \nu} & {c_{4}c_{1}^{-1}},\ {c_{4}^{-1}c_{3}} & 0 &
 (3,2,0)_{\mu, \nu} & {c_{4}c_{1}^{-1}},\ {c_{3}c_{4}}, & 0\\
 (4,2,1)_{\mu, \nu} & {c_{1}^{-1}c_{4}c_{3}},\ {c_{3}c_{4}c_{3}^{-1}} & 0 &
 (4,3,-2)_{\mu, \nu} & {c_{1}^{-1}c_{4}c_{3}c_{4}},\ {c_{4}c_{3}^{-1}c_{4}^{-1}c_{3}} & -1\\
 (4,3,0)_{\mu, \nu} & {c_{1}^{-1}c_{4}},\ {c_{4}c_{3}^{-1}} & 0 &
 (4,4,0)_{\mu, \nu} & {c_{1}^{-1}c_{4}},\ {c_{3}^{-1}c_{4}^{-1}} & 0\\
\end{array}
$$
The indices of all components can be obtained from Theorem~\ref{thm-index} and (\ref{eq-T}).
Take two components as examples, for the common value class with data $(1,1,2)_{\mu, \nu}$, its index is given by:
$$\begin{array}{cl}
 & \frac{1}{2}(\Theta(-T(u_1), -T(v_1), T(u_2)) + \Theta(T(u_4), T(v_4), -T(u_3)))\\
= & \frac{1}{2}(\Theta(-T(c_4), -T(c_4^{-1}), T(c_3)) + \Theta(T(c_1^{-1}), T(c_3^{-1}), -T(c_4)) )\\
= & \frac{1}{2}(\Theta(e^{\frac{7\pi}{4} i}, e^{\frac{3\pi}{4} i}, e^{\frac{6\pi}{4} i}) + \Theta(e^{\frac{0\pi}{4} i}), e^{\frac{2\pi}{4} i}, e^{\frac{3\pi}{4} i})\\
= & 1.
 \end{array}
$$
For the common value class with data $(1,2,0)_{\mu, \nu}$, its index is given by:
$$\begin{array}{cl}
   & \frac{1}{2}(\Theta(-T(u_1), -T(v_2), T(u_2)) + \Theta(T(u_2), T(v_3), -T(u_1)))\\
 = & \frac{1}{2}(\Theta(-T(c_4), -T(c_3), T(c_3)) + \Theta(T(c_3), T(c_4), -T(c_4)))\\
 = & \frac{1}{2}(\Theta(e^{\frac{7\pi}{4} i}, e^{\frac{2\pi}{4} i},e^{\frac{6\pi}{4} i})
    + \Theta(e^{\frac{6\pi}{4} i}, e^{\frac{3\pi}{4} i}, e^{\frac{7\pi}{4} i}))\\
 = & \frac{1}{2}(1+(-1))\\
 = & 0.
 \end{array}
$$

The two functions $\Theta(\cdot, \cdot, \cdot)$ and $T(\cdot)$ are both symbolic, because they have respectively $3$ and $4g$ possible values. One can compute these indices easily. The numerical computation in \cite[Sec. 6]{Reinhart1962} is avoided.

Next two figures show the intersections in the universal covering (Poincar\'{e} disk) for the first two data: $(1,1,2)_{\mu, \nu}$ and $(1,2,0)_{\mu, \nu}$.

\begin{center}
\setlength{\unitlength}{0.7mm}
\begin{picture}(70,40)(0,-20)
\put(0,20){\vector(1,-1){20}}
\put(40,-20){\vector(-1,1){20}}
\put(20,0){\vector(0,-1){20}}
\put(20,-20){\vector(-1,1){20}}
\put(0,0){\vector(0,1){10}}
\put(0,0){\vector(1,0){10}}

\put(32,-6){\makebox(0,0)[lc]{$\tilde u_1$}}
\put(12,10){\makebox(0,0)[lc]{$\tilde v_1$}}
\put(20,-10){\makebox(0,0)[cc]{$\tilde u_2 = \tilde v_2$}}
\put(8,-10){\makebox(0,0)[rc]{$\tilde u_3 = \tilde v_3$}}
\put(-2,5){\makebox(0,0)[rc]{$\tilde v_4$}}
\put(5,1){\makebox(0,0)[cb]{$\tilde u_4$}}
\end{picture}
\
\
\begin{picture}(40,40)(-20,-20)
\put(20,-20){\vector(-1,1){20}}
\put(0,0){\vector(0,-1){20}}
\put(0,20){\vector(0,-1){20}}
\put(0,0){\vector(-1,1){20}}

\put(12,-10){\makebox(0,0)[lc]{$\tilde u_1$}}
\put(-2,-10){\makebox(0,0)[rc]{$\tilde u_2$}}
\put(2,10){\makebox(0,0)[lc]{$\tilde v_2$}}
\put(-12,10){\makebox(0,0)[rc]{$\tilde v_3$}}
\end{picture}
\end{center}
Here $\tilde u_k$ indicates a lifting of $u_k$, and $\tilde v_l$ indicates a lifting of $v_l$.

Since $\mu$ and $\nu$ are both $D$-cyclically reduced, by Theorem~\ref{thm-connect-class}, each component is exactly a common value class. It follows from Theorem~\ref{thm-minimal-intersection} that $\myi(\varphi, \psi)=2$. Two loops realizing their minimal geometric intersection are the following:
\begin{center}
\setlength{\unitlength}{0.5mm}
\begin{picture}(160,40)(-80,-20)
\put(-72,0){\makebox(0,0)[rc]{$\mu$}}
\put(0,-20){\makebox(0,0)[cc]{$\nu$}}

\qbezier(11.72,-14.14)(13.64,-15.11)(15.81,-15.93)
\qbezier(15.81,-15.93)(17.85,-16.7)(20.06,-17.34)
\qbezier(20.06,-17.34)(22.16,-17.94)(24.4,-18.42)
\qbezier(24.4,-18.42)(26.56,-18.87)(28.8,-19.2)
\qbezier(28.8,-19.2)(30.99,-19.52)(33.25,-19.71)
\qbezier(33.25,-19.71)(35.46,-19.9)(37.71,-19.97)
\qbezier(37.71,-19.97)(39.95,-20.03)(42.18,-19.97)
\qbezier(42.18,-19.97)(44.43,-19.91)(46.65,-19.72)
\qbezier(46.65,-19.72)(48.9,-19.53)(51.09,-19.22)
\qbezier(51.09,-19.22)(53.34,-18.89)(55.5,-18.44)
\qbezier(55.5,-18.44)(57.73,-17.97)(59.84,-17.37)
\qbezier(59.84,-17.37)(62.05,-16.74)(64.08,-15.97)
\qbezier(64.08,-15.97)(66.26,-15.15)(68.19,-14.19)
\qbezier(68.19,-14.19)(70.29,-13.15)(72.06,-11.96)
\qbezier(72.06,-11.96)(74.04,-10.63)(75.55,-9.16)
\qbezier(75.55,-9.16)(77.28,-7.49)(78.36,-5.68)
\qbezier(78.36,-5.68)(79.57,-3.62)(79.89,-1.48)
\qbezier(79.89,-1.48)(80.22,0.76)(79.55,2.98)
\qbezier(79.55,2.98)(78.94,5.02)(77.5,6.96)
\qbezier(77.5,6.96)(76.25,8.64)(74.41,10.19)
\qbezier(74.41,10.19)(72.8,11.56)(70.77,12.78)
\qbezier(70.77,12.78)(68.93,13.88)(66.8,14.85)
\qbezier(66.8,14.85)(64.83,15.74)(62.64,16.49)
\qbezier(62.64,16.49)(60.58,17.2)(58.36,17.77)
\qbezier(58.36,17.77)(56.23,18.32)(53.99,18.74)
\qbezier(53.99,18.74)(51.82,19.14)(49.57,19.42)
\qbezier(49.57,19.42)(47.37,19.69)(45.12,19.84)
\qbezier(45.12,19.84)(42.89,19.98)(40.65,20.)
\qbezier(40.65,20.)(38.41,20.02)(36.18,19.91)
\qbezier(36.18,19.91)(33.93,19.8)(31.72,19.57)
\qbezier(31.72,19.57)(29.47,19.33)(27.29,18.96)
\qbezier(27.29,18.96)(25.04,18.59)(22.9,18.08)
\qbezier(22.9,18.08)(20.67,17.55)(18.59,16.89)
\qbezier(18.59,16.89)(16.39,16.2)(14.39,15.36)
\qbezier(14.39,15.36)(13.,14.78)(11.72,14.14)
\qbezier(18,4)(40,-9)(62,4)
\qbezier(26,0)(40,9)(54,0)

\qbezier(-18,4)(-40,-9)(-62,4)
\qbezier(-26,0)(-40,9)(-54,0)

\qbezier(11.7157,14.1421)(0,8.28427)(-11.7157,14.1421)
\qbezier(11.7157,-14.1421)(0,-8.28427)(-11.7157,-14.1421)

\textcolor[rgb]{1,0,0}{
\qbezier(0,-11.)(1.45,-11.)(2.67,-9.3)
\qbezier(2.67,-9.3)(3.48,-8.17)(4.05,-6.45)
\qbezier(4.05,-6.45)(4.52,-5.04)(4.76,-3.38)
\qbezier(4.76,-3.38)(4.98,-1.84)(5.,-0.22)
\qbezier(5.,-0.22)(5.01,1.39)(4.82,2.94)
\qbezier(4.82,2.94)(4.61,4.6)(4.18,6.03)
\qbezier(4.18,6.03)(3.67,7.74)(2.92,8.93)
\qbezier(2.92,8.93)(1.82,10.67)(0.48,10.95)
\qbezier(0.48,10.95)(0.24,11.)(0,11.)
\qbezier(0,11.)(-1.45,11.)(-2.67,9.3)
\qbezier(-4.05,6.45)(-4.52,5.04)(-4.76,3.38)
\qbezier(-5.,0.22)(-5.01,-1.39)(-4.82,-2.94)
\qbezier(-4.18,-6.03)(-3.67,-7.74)(-2.92,-8.93)
\qbezier(-0.48,-10.95)(-0.24,-11.)(0,-11.)
}
\qbezier(-11.72,14.14)(-13.64,15.11)(-15.81,15.93)
\qbezier(-15.81,15.93)(-17.85,16.7)(-20.06,17.34)
\qbezier(-20.06,17.34)(-22.16,17.94)(-24.4,18.42)
\qbezier(-24.4,18.42)(-26.56,18.87)(-28.8,19.2)
\qbezier(-28.8,19.2)(-30.99,19.52)(-33.25,19.71)
\qbezier(-33.25,19.71)(-35.46,19.9)(-37.71,19.97)
\qbezier(-37.71,19.97)(-39.95,20.03)(-42.18,19.97)
\qbezier(-42.18,19.97)(-44.43,19.91)(-46.65,19.72)
\qbezier(-46.65,19.72)(-48.9,19.53)(-51.09,19.22)
\qbezier(-51.09,19.22)(-53.34,18.89)(-55.5,18.44)
\qbezier(-55.5,18.44)(-57.73,17.97)(-59.84,17.37)
\qbezier(-59.84,17.37)(-62.05,16.74)(-64.08,15.97)
\qbezier(-64.08,15.97)(-66.26,15.15)(-68.19,14.19)
\qbezier(-68.19,14.19)(-70.29,13.15)(-72.06,11.96)
\qbezier(-72.06,11.96)(-74.04,10.63)(-75.55,9.16)
\qbezier(-75.55,9.16)(-77.28,7.49)(-78.36,5.68)
\qbezier(-78.36,5.68)(-79.57,3.62)(-79.89,1.48)
\qbezier(-79.89,1.48)(-80.22,-0.76)(-79.55,-2.98)
\qbezier(-79.55,-2.98)(-78.94,-5.02)(-77.5,-6.96)
\qbezier(-77.5,-6.96)(-76.25,-8.64)(-74.41,-10.19)
\qbezier(-74.41,-10.19)(-72.8,-11.56)(-70.77,-12.78)
\qbezier(-70.77,-12.78)(-68.93,-13.88)(-66.8,-14.85)
\qbezier(-66.8,-14.85)(-64.83,-15.74)(-62.64,-16.49)
\qbezier(-62.64,-16.49)(-60.58,-17.2)(-58.36,-17.77)
\qbezier(-58.36,-17.77)(-56.23,-18.32)(-53.99,-18.74)
\qbezier(-53.99,-18.74)(-51.82,-19.14)(-49.57,-19.42)
\qbezier(-49.57,-19.42)(-47.37,-19.69)(-45.12,-19.84)
\qbezier(-45.12,-19.84)(-42.89,-19.98)(-40.65,-20.)
\qbezier(-40.65,-20.)(-38.41,-20.02)(-36.18,-19.91)
\qbezier(-36.18,-19.91)(-33.93,-19.8)(-31.72,-19.57)
\qbezier(-31.72,-19.57)(-29.47,-19.33)(-27.29,-18.96)
\qbezier(-27.29,-18.96)(-25.04,-18.59)(-22.9,-18.08)
\qbezier(-22.9,-18.08)(-20.67,-17.55)(-18.59,-16.89)
\qbezier(-18.59,-16.89)(-16.39,-16.2)(-14.39,-15.36)
\qbezier(-14.39,-15.36)(-13.,-14.78)(-11.72,-14.14)
%
%% curve on it
%
\textcolor[rgb]{0,0,1}{
\qbezier(15.73,-7.05)(17.04,-7.77)(18.59,-8.41)
\qbezier(18.59,-8.41)(19.99,-8.98)(21.57,-9.47)
\qbezier(21.57,-9.47)(23.03,-9.92)(24.62,-10.3)
\qbezier(24.62,-10.3)(26.12,-10.66)(27.71,-10.95)
\qbezier(27.71,-10.95)(29.24,-11.22)(30.84,-11.43)
\qbezier(30.84,-11.43)(32.39,-11.63)(33.99,-11.76)
\qbezier(33.99,-11.76)(35.55,-11.88)(37.14,-11.95)
\qbezier(37.14,-11.95)(38.72,-12.01)(40.3,-12.)
\qbezier(40.3,-12.)(41.89,-11.99)(43.47,-11.92)
\qbezier(43.47,-11.92)(45.06,-11.85)(46.62,-11.7)
\qbezier(46.62,-11.7)(48.22,-11.56)(49.76,-11.35)
\qbezier(49.76,-11.35)(51.36,-11.13)(52.88,-10.84)
\qbezier(52.88,-10.84)(54.48,-10.53)(55.97,-10.16)
\qbezier(55.97,-10.16)(57.56,-9.76)(59.01,-9.28)
\qbezier(59.01,-9.28)(60.59,-8.77)(61.97,-8.17)
\qbezier(61.97,-8.17)(63.52,-7.5)(64.8,-6.75)
\qbezier(64.8,-6.75)(66.29,-5.88)(67.37,-4.91)
\qbezier(67.37,-4.91)(68.7,-3.73)(69.36,-2.46)
\qbezier(69.36,-2.46)(70.17,-0.93)(69.96,0.64)
\qbezier(69.96,0.64)(69.76,2.12)(68.67,3.53)
\qbezier(68.67,3.53)(67.8,4.67)(66.38,5.71)
\qbezier(66.38,5.71)(65.2,6.59)(63.68,7.37)
\qbezier(63.68,7.37)(62.35,8.05)(60.79,8.65)
\qbezier(60.79,8.65)(59.37,9.2)(57.79,9.66)
\qbezier(57.79,9.66)(56.32,10.1)(54.73,10.45)
\qbezier(54.73,10.45)(53.22,10.8)(51.62,11.06)
\qbezier(51.62,11.06)(50.09,11.32)(48.49,11.51)
\qbezier(48.49,11.51)(46.93,11.69)(45.34,11.81)
\qbezier(45.34,11.81)(43.77,11.92)(42.18,11.97)
\qbezier(42.18,11.97)(40.6,12.01)(39.02,11.99)
\qbezier(39.02,11.99)(37.43,11.97)(35.86,11.88)
\qbezier(35.86,11.88)(34.26,11.8)(32.7,11.64)
\qbezier(32.7,11.64)(31.11,11.48)(29.57,11.25)
\qbezier(29.57,11.25)(27.97,11.01)(26.45,10.71)
\qbezier(26.45,10.71)(24.86,10.38)(23.37,9.99)
\qbezier(23.37,9.99)(21.78,9.56)(20.34,9.07)
\qbezier(20.34,9.07)(18.78,8.52)(17.41,7.9)
\qbezier(17.41,7.9)(16.52,7.49)(15.73,7.05)
\qbezier(-15.73,7.05)(-17.04,7.77)(-18.59,8.41)
\qbezier(-18.59,8.41)(-19.99,8.98)(-21.57,9.47)
\qbezier(-21.57,9.47)(-23.03,9.92)(-24.62,10.3)
\qbezier(-24.62,10.3)(-26.12,10.66)(-27.71,10.95)
\qbezier(-27.71,10.95)(-29.24,11.22)(-30.84,11.43)
\qbezier(-30.84,11.43)(-32.39,11.63)(-33.99,11.76)
\qbezier(-33.99,11.76)(-35.55,11.88)(-37.14,11.95)
\qbezier(-37.14,11.95)(-38.72,12.01)(-40.3,12.)
\qbezier(-40.3,12.)(-41.89,11.99)(-43.47,11.92)
\qbezier(-43.47,11.92)(-45.06,11.85)(-46.62,11.7)
\qbezier(-46.62,11.7)(-48.22,11.56)(-49.76,11.35)
\qbezier(-49.76,11.35)(-51.36,11.13)(-52.88,10.84)
\qbezier(-52.88,10.84)(-54.48,10.53)(-55.97,10.16)
\qbezier(-55.97,10.16)(-57.56,9.76)(-59.01,9.28)
\qbezier(-59.01,9.28)(-60.59,8.77)(-61.97,8.17)
\qbezier(-61.97,8.17)(-63.52,7.5)(-64.8,6.75)
\qbezier(-64.8,6.75)(-66.29,5.88)(-67.37,4.91)
\qbezier(-67.37,4.91)(-68.7,3.73)(-69.36,2.46)
\qbezier(-69.36,2.46)(-70.17,0.93)(-69.96,-0.64)
\qbezier(-69.96,-0.64)(-69.76,-2.12)(-68.67,-3.53)
\qbezier(-68.67,-3.53)(-67.8,-4.67)(-66.38,-5.71)
\qbezier(-66.38,-5.71)(-65.2,-6.59)(-63.68,-7.37)
\qbezier(-63.68,-7.37)(-62.35,-8.05)(-60.79,-8.65)
\qbezier(-60.79,-8.65)(-59.37,-9.2)(-57.79,-9.66)
\qbezier(-57.79,-9.66)(-56.32,-10.1)(-54.73,-10.45)
\qbezier(-54.73,-10.45)(-53.22,-10.8)(-51.62,-11.06)
\qbezier(-51.62,-11.06)(-50.09,-11.32)(-48.49,-11.51)
\qbezier(-48.49,-11.51)(-46.93,-11.69)(-45.34,-11.81)
\qbezier(-45.34,-11.81)(-43.77,-11.92)(-42.18,-11.97)
\qbezier(-42.18,-11.97)(-40.6,-12.01)(-39.02,-11.99)
\qbezier(-39.02,-11.99)(-37.43,-11.97)(-35.86,-11.88)
\qbezier(-35.86,-11.88)(-34.26,-11.8)(-32.7,-11.64)
\qbezier(-32.7,-11.64)(-31.11,-11.48)(-29.57,-11.25)
\qbezier(-29.57,-11.25)(-27.97,-11.01)(-26.45,-10.71)
\qbezier(-26.45,-10.71)(-24.86,-10.38)(-23.37,-9.99)
\qbezier(-23.37,-9.99)(-21.78,-9.56)(-20.34,-9.07)
\qbezier(-20.34,-9.07)(-18.78,-8.52)(-17.41,-7.9)
\qbezier(-17.41,-7.9)(-16.52,-7.49)(-15.73,-7.05)
\qbezier(-15.73,-7.05)(0,3)(15.73,-7.05)
\qbezier(-15.73,7.05)(0,0)(15.73,7.05)
}
\end{picture}
\end{center}

Our method to compute intersections and self-intersections still works for the loops on the surfaces with non-empty boundaries, as in \cite{Tan1996}. In this case, a cyclically reduced word is the same as one without any cyclical cancelation. The computation of components is exactly the same. The index of a common value class can be computed if the function $\Theta(\cdot, \cdot, \cdot)$ is known, which is clearly available when the generators in $\pi_1$ are given precisely as concrete loops.

%{\bf Acknowledgements}

%The authors would like to thank the referee for his/her valuable comments in improving the presentation of %this paper.


\begin{thebibliography}{68}

\bibitem{Arettines2015}  Arettines, C.: A combinatorial algorithm for visualizing representatives with minimal self-intersection, J. Knot Theory Ramifications, 24(2015), no. 11, Article ID 1550058, 17 pp.
%(English) Zbl 1327.57005

%\bibitem{Armstrong} Armstrong, M. A.:  Basic Topology, Undergraduate Texts in Mathematics. New York etc.: %Springer-Verlag. 1990. xii, 251 p.

%\bibitem{Birman} Birman, J. S.: The topology of $3$-manifolds, Heegaard distance and the mapping class %group of a $2$-manifold. Farb, Benson (ed.), Problems on mapping class groups and related topics. %Providence, RI: American Mathematical Society (AMS). Proceedings of Symposia in Pure Mathematics %74,133--149 (2006).

\bibitem{BS1984} Birman J. S.; Series C.: An algorithm for simple curves on surfaces, {J. London Math. Soc.},  {29}(1984), no. 2, 331--342.

\bibitem{BS1985} Birman J. S.; Series C.: Geodesics with bounded intersection number on surfaces are sparsely distributed, {Topology},  {24}(1985), 217--225.

\bibitem{Vesnin} Bokut, L.;  Vesnin, A.: Gr\"{o}bner-Shirshov bases for some braid groups, Journal of Symbolic Computation, 41 (2006), 357--371.

\bibitem{Buchberger} Buchberger, B.: Bruno Buchberger's PhD thesis 1965: An Algorithm for Finding the Basis Elements of the Residue Class Ring of a Zero Dimensional Polynomial Ideal, Journal of Symbolic Computation 41:475--511.

\bibitem{Casson} Casson, A. J.; Bleiler, S. A.:  Automorphisms of surfaces after Nielsen and Thurston. London Mathematical Society Student Texts, 9. Cambridge University Press, Cambridge, 1988. iv+105 pp.

\bibitem{Chas2004} Chas, M.: Combinatorial Lie Bialgebras of curves on surfaces, Topology, 43(2004), no. 3, 543--568.

\bibitem{Chas2010} Chas, M.: Minimal intersection of curves on surfaces, Geom. Dedicata 144 (2010), 25--60.

\bibitem{Chas2012} Chas, M.; Lalley, S. P.: Self-intersections in combinatorial topology: statistical structure, Invent Math., 188(2012), 429--463.

\bibitem{Ch1} Chillingworth D.: Simple closed curves on surfaces, {Bull. London Math. Soc.}, {1}(1969),  310--314.

\bibitem{Ch2} Chillingworth D.: Winding numbers on surfaces I, {Math. Ann.}, 196(1972),  218--249.

\bibitem{Ch3} Chillingworth D.: Winding numbers on surfaces II, {Math. Ann.}, 199(1972), 131--153.

\bibitem{Marshall-Lustig1987} Cohen, M.; Lustig, M.: Paths of geodesics and geometric intersection numbers. I. Combinatorial group theory and topology, Sel. Pap. Conf., Alta/Utah 1984, Ann. Math. Stud. 111, 479--500 (1987).

\bibitem{DL2019} Despr\'{e}, V.; Lazarus, F.: Computing the Geometric Intersection Number of Curves, Journal of the ACM, 66(2019), no. 6, 1--49.

\bibitem{Dehn} Dehn, M. Uber Kurvensysteme auf zweiseitigen Flachen mit Anwendung auf das Abbildungsproblem, Autogr. Vortrag in Math. Kolloquium, Breslau, 11 Feb. 1922.

\bibitem{GTM104} Doubrovine, B.; Novikov, S.; Fomenko, A.: Modern geometry - Methods and applications. Part II: Geometry and topology of manifolds. Translated by Robert G. Burns, GTM 104, Springer-Verlag, New York, 1984. 432 pp.

\bibitem{Goldman1986} Goldman W. M.: Invariant functions on Lie groups and Hamiltonian flows of surface group representations, { Invent. Math.}, 85(1986), no. 2, 263--302.


\bibitem{Daci} Gon\c{c}alves, D. L.; Kudryavtseva, E. A.; Zieschang, H.:  An algorithm for minimal number of (self-)intersection points of curves on surfaces.  Tr. Semin. Vektorn. Tenzorn. Anal. 26(2005), 139--167.

\bibitem{guying} Gu, Y.; Zhao, X.: Common value pairs and their estimations. Bull. Belg. Math. Soc. Simon Stevin, 24 (2017), no. 4, 725--739.
%(2017-12-01)

%\bibitem{Harer} Harer, J. L.: The virtual cohomological dimension of the mapping class group of an %orientable surface, Invent. Math.,  84  (1986),  no. 1, 157--176.

%\bibitem{Harvey} Harvey, W. J.: Boundary structure of the modular group.  Riemann surfaces and related %topics: Proceedings of the 1978 Stony Brook Conference (State Univ. New York, Stony Brook, N.Y., 1978),  %pp. 245--251, Ann. of Math. Stud., 97, Princeton Univ. Press, Princeton, N.J., 1981.


%\bibitem{Hempel2001} Hempel, J.: $3$-manifolds as viewed from the curve complex. Topology  40  (2001),  % % no. 3, 631--657.
%[22] Hempel, J. $3$-manifolds as viewed from the curve complex. Topology  40  (2001),  no. 3, 631¨C657.

\bibitem{Hass-Scott} Hass, J.; Scott,  P.: Shortening curves on surfaces, Topology 33 (1994), no. 1, 25--43.

\bibitem{Humphries} Humphries, S. P.: Intersection-number operators and Chebyshev polynomials. IV: Non-planar cases. Geom. Dedicata, 130 (2007), 25-41.

\bibitem{Jiangbook} Jiang, B.:  Lectures on Nielsen fixed point theory. Contemporary Mathematics, 14. American Mathematical Society, Providence, R.I., 1983.

\bibitem{Lustig1987II} Lustig, M.: Paths of geodesics and geometric intersection numbers. II. Combinatorial group theory and topology, Sel. Pap. Conf., Alta/Utah 1984, Ann. Math. Stud. 111 (1987), 501--543.

\bibitem{Mirzakhan2008} Mirzakhani, M.: Growth of the number of simple closed geodesics on hyperbolic surfaces, Ann. Math., 168 (2008), 97--125.

%\bibitem{Mamagani} Mamagani, M. D.: Growth Functions of Groups of Surfaces, Mathematical Notes,  58(1995), %no.5,  1156--1165.

\bibitem{Madlener1989} Madlener, K.; Reinert, B.: Relating rewriting techniques on monoids and rings: congruences on monoids and ideals in monoid rings, Theoretical Computer Science, 208 (1998), 3--31.

\bibitem{Paterson} Paterson, J. M.: A combinatorial algorithm for immersed loops in surfaces. Topology Appl. 123 (2002), no. 2, 205--234.

\bibitem{Reinhart1960} Reinhart, B.: The winding number on two-manifolds. Ann. Inst. Fourier, 10(1960), 271--283.
% (English) Zbl 0097.16203

\bibitem{Reinhart1962} Reinhart, B.: Algorithms for Jordan curves on compact surfaces, Ann. Math., (2) 75(1962), 209--222 .

\bibitem{Rivin} Rivin, I.: Simple curves on surfaces, Geom. Dedicata,  87 (2001),  no. 1-3, 345--360.

%\bibitem{Shackleton} Shackleton, K. J.:  Tightness and computing distances in the curve complex. %Geometriae Dedicata, 160(2012), no. 1, 243--259.

\bibitem{Shirshov} Shirshov, A.: Some algorithm problems for Lie algebras. Sibirsk. Mat. Z., 3(1962), 292--296 (in Russian).

\bibitem{Tan1996} Tan, S. P.: Self-intersections of curves on surfaces, { Geom. Dedicata},  {62}(1996), no. 2, 209--225.

\bibitem{Zieschang1965} Zieschang, H.:  Algorithmen f\"{u}r einfache Kurven auf Fl\"{a}chen, Math. Scand., 17  (1965), 17--40.
% (German) Zbl 0151.33005
\end{thebibliography}
\end{document}